\documentclass[a4paper,reqno,12pt]{amsart}
\textwidth 16cm
\topmargin 0cm
\oddsidemargin 0cm
\evensidemargin 0cm
\setlength{\textheight}{658pt}
\parskip 0.2cm

\usepackage{rotating,pdflscape,amssymb,hyperref,subfigure,psfrag,amsmath,eufrak,bbm}
\author[F. Benaych-Georges, A. Guionnet, M. Maida]{F. Benaych-Georges*, A. Guionnet$^\star$, M. Maida$^\sharp$.}

\title[Extreme eigenvalues of deformed random matrices]{Fluctuations of the extreme eigenvalues of finite rank deformations of  random matrices}

\date{\today}
\newcommand{\ulmu}{^{(\ell-1)}}
\newcommand{\ul}{^{(\ell)}}
\newcommand{\si}{\sigma}
\newcommand{\al}{\alpha}
\newcommand{\tta}{\theta}

\newcommand{\ovl}{\overline}
\newcommand{\udl}{\underline}
\newcommand{\bbm}{\begin{bmatrix}}
\newcommand{\ebm}{\end{bmatrix}}
\newcommand{\bes}{\begin{equation*}}
\newcommand{\ees}{\end{equation*}}
\newcommand{\be}{\begin{equation}}
\newcommand{\ee}{\end{equation}}
\newcommand{\beqy}{\begin{eqnarray}}
\newcommand{\eeqy}{\end{eqnarray}}
\newcommand{\beq}{\begin{eqnarray*}}
\newcommand{\eeq}{\end{eqnarray*}}

\newcommand{\GOE}{\operatorname{GOE}}
\newcommand{\GUE}{\operatorname{GUE}}

\newcommand{\sgn}{\operatorname{sgn}}
\newcommand{\lan}{\langle}
\newcommand{\ran}{\rangle}

\newcommand{\diag}{\operatorname{diag}}

\newcommand{\Pro}{\mathbb{P}}

\newcommand{\tr}{\operatorname{tr}}

\newcommand{\Tr}{\operatorname{Tr}}

\newcommand{\ninf}{\underset{n\to\infty}{\longrightarrow}}

\newcommand{\ssi}{if and only if }

\newcommand{\one}{\mathbbm{1}}

\newcommand{\E}{\mathbb{E}}

\newcommand{\R}{\mathbb{R}}
\newcommand{\C}{\mathbb{C}}

\newcommand{\z}{\mathbb{Z}}

\newcommand{\ud}{\mathrm{d}}

\newcommand{\pro}{probability }

\newcommand{\f}{\frac}
\newcommand{\ff}{\frac{1}}
\newcommand{\lf}{\left}
\newcommand{\ri}{\right}

\newcommand{\st}{such that }
\newcommand{\la}{\lambda}
\newcommand{\La}{\Lambda}

\newcommand{\ste}{\, ;\, }
\newcommand{\mc}{\mathcal }

\newcommand{\eps}{\varepsilon}

\def\e{{\epsilon}}
\def\ra{{\rightarrow}}

\newcommand{\wtX}{\widetilde{X_n}}

\newcommand{\wtl}{\widetilde{\la}}

\newcommand{\convas}{\overset{\textrm{a.s.}}{\longrightarrow}}

\newcommand{\bck}{\backslash}

\def\tr{{\rm Tr}}
\newtheorem{Th}{Theorem}[section]
\newtheorem{hyp}[Th]{Hypothesis}

\newtheorem{propo}[Th]{Proposition}

\newtheorem{assum}[Th]{Assumption}
\newtheorem{lem}[Th]{Lemma}

\newtheorem{rmq}[Th]{Remark}

\newenvironment{pr}{\noindent {\it Proof. }}{\hfill$\square$}

\long\def\symbolfootnote[#1]#2{\begingroup
\def\thefootnote{\fnsymbol{footnote}}\footnote[#1]{#2}\endgroup}
\setcounter{equation}{2}
\setcounter{tocdepth}{1}

\begin{document}

\maketitle

\begin{center}
\small
* UPMC Univ Paris 6, LPMA,  Case courier 188, 4, Place Jussieu, 75252 Paris Cedex 05, France.\\
Email: florent.benaych@upmc.fr,\\
\mbox{}\\
$\star$ UMPA, ENS Lyon, 46 all\'ee d'Italie, 69364 Lyon Cedex 07,
France.\\
Email: aguionne@umpa.ens-lyon.fr,\\
\mbox{} \\
$\sharp$ Universit\'e Paris-Sud, Laboratoire de Math\'ematiques,
B\^atiment 425,
Facult\'e des Sciences,
91405 Orsay Cedex, France.\\
Email: mylene.maida@math.u-psud.fr.\\
\end{center}

\mbox{}\\

\normalsize

\begin{abstract}
 Consider a deterministic self-adjoint matrix $X_n$ with spectral measure converging to a compactly supported
probability measure, the largest and smallest eigenvalues converging to the edges of the limiting measure. We perturb this matrix by adding
  a random finite rank matrix with delocalised eigenvectors and study the extreme eigenvalues of the deformed model. We give necessary conditions on the deterministic matrix $X_n$ so that
the eigenvalues converging out of the bulk exhibit Gaussian fluctuations,    
whereas the eigenvalues sticking to the edges are very close to the eigenvalues of the non-perturbed model and fluctuate in the same scale.\\
We generalize  these results to the case when $X_n$ is random and get similar behavior when we deform
some classical models such as  Wigner or Wishart matrices with rather general entries or the so-called matrix models.
\end{abstract}

\mbox{}\\

\noindent {\sl Key words:} random matrices, spiked models, extreme eigenvalue statistics, Gaussian fluctuations, Tracy-Widom laws.\\ 
\noindent {\sl Math. Subj. Class.:} 60B20, 60F05.\\
\mbox{}\\

\noindent
This work was   supported by the \emph{Agence Nationale de la
Recherche} grant ANR-08-BLAN-0311-03.

\noindent Submitted January 04, 2011,
 final version accepted July 23, 2011.

\newpage

\tableofcontents

\section{Introduction}\label{introduction}
Most of the  spectrum of 
a large matrix is not much  altered if one adds a finite 
rank perturbation to the matrix, simply because of Weyl's interlacement properties of the eigenvalues.
 But the extreme eigenvalues, depending on the strength of the perturbation, can either
stick to the extreme eigenvalues of the non-perturbed matrix or deviate to some
larger values. This phenomenon was made precise
in \cite{BBP}, where a sharp phase transition, known as the {\it BBP transition}
\cite{gen1,gen2,gen3,gen4},
 was exhibited for finite rank perturbations of a complex Gaussian  Wishart matrix. In this case,
 it was shown that if the strength of the perturbation is above a threshold,
the largest eigenvalue of the perturbed matrix 
deviates away from the bulk and has then
Gaussian fluctuations, otherwise it sticks to the bulk and fluctuates according to the
Tracy-Widom law. The fluctuations of the extreme eigenvalues
which deviate from the bulk were studied as well
when the non-perturbed matrix is a Wishart (or  Wigner)
matrix with non-Gaussian entries;
they were shown to be Gaussian if the perturbation is chosen randomly
with i.i.d. entries  in \cite{bai-yao-TCL},
or with completely delocalised eigenvectors \cite{FP07,FeralPeche09},
 whereas in  \cite{CDF09}, a non-Gaussian behaviour was
exhibited when the perturbation has localised eigenvectors. The influence 
of the localisation of the eigenvectors of the perturbation
was studied more precisely in \cite{CDF09b}.\\

 In this
paper, we also focus  on the behaviour of the extreme eigenvalues
 of a  finite rank
perturbation of a large matrix, this time in the framework where the large matrix is
 deterministic whereas the perturbation has delocalised 
random eigenvectors. 
 We show that the eigenvalues which deviate
away from the bulk have Gaussian fluctuations, whereas those
which stick to the bulk are extremely close to the extreme eigenvalues
of the non-perturbed matrix.  In a one-dimensional perturbation situation, we
can as well study the fluctuations of the next eigenvalues, for instance showing that if the first eigenvalue deviates from the bulk, the second
eigenvalue will stick to the first eigenvalue of the non-perturbed matrix,
whereas if the first eigenvalue sticks to the bulk, the second eigenvalue 
will be very close to the second eigenvalue of the non-perturbed matrix.
Hence, for a one dimensional 
perturbation, the eigenvalues which stick to the
bulk will fluctuate as the eigenvalues of the non-perturbed matrix.
We can also extend these results
beyond the case when the non-perturbed matrix is deterministic. 
In particular, if the non-perturbed matrix is a
   Wishart (or Wigner) matrix with rather general
entries, or a matrix model, we can use the universality
of the fluctuations of the extreme
eigenvalues of these random matrices
to show   that the $p$th extreme eigenvalue which sticks to the 
bulk fluctuates according to the $p$th dimensional Tracy-Widom law.  This proves the universality
of the BBP transition at the fluctuation level, provided the
perturbation is delocalised and random. \\
The reader should notice however that we do not deal with the asymptotics of eigenvalues corresponding to critical deformations.
This probably requires a case-by-case analysis and may depend on the model under  consideration.\\

Let us now describe more precisely the models we will be dealing with. We consider a deterministic  self-adjoint 
 matrix $X_n$ with eigenvalues   $\la_1^{n}\le \cdots\le \la_n^{n}$
satisfying the following hypothesis.
\begin{hyp} \label{hypspec}
The spectral measure $\mu_n:=n^{-1}\sum_{l=1}^n \delta_{\lambda_l^n}$ of $X_n$ converges towards a deterministic
  \pro measure $\mu_X$ with compact support. Moreover, 
 the smallest and largest eigenvalues of $X_n$  converge respectively   
 to $a$ and $b$, the lower and upper bounds
of the support of $\mu_X$.
\end{hyp}

We  study the eigenvalues  $\wtl_1^n \le \cdots\le \wtl_n^n$ of a perturbation $\wtX:=X_n+R_n$ obtained from $X_n$
by adding a finite rank matrix $R_n=\sum_{i=1}^r \theta_i u_i^n u_i^{n^*}.$
We shall assume $r$ and the 
 $\theta_i$'s to be   deterministic and independent of $n$, but the
column vectors $(u_i^n)_{1\le i\le r}$ chosen randomly
as follows. Let $\nu$ be a probability measure on $\R$ or $\C$ satisfying
\begin{assum} \label{hyponG}
 The probability measure $\nu$ 
 satisfies a logarithmic Sobolev inequality, is centred and has variance one. If  $\nu$ is not supported on $\R$,
we assume moreover that
its real part and its imaginary part are independent and identically distributed. 
\end{assum}
We consider now a random vector
 $v^n=\frac{1}{\sqrt{n}} (x_1,\ldots,x_n)^T$ 
with $(x_i)_{1\le i\le n}$ i.i.d. real or complex 
 random variables with law $\nu$. Then
\begin{enumerate}
\item Either the $u_i^n$'s ($i=1, \ldots, r$) are independent  copies of $v^n$
\item Or $(u_i^n)_{1\le i\le r}$ are obtained
by the Gram-Schmidt orthonormalisation 
of $r$ independent copies of a vector $v^n.$ 
\end{enumerate}

We shall refer to the model (1) as   the {\it i.i.d. model} and to the model (2) as the {\it orthonormalised model}. \\

Before giving a rough statement of our results, let us make a few remarks.\\
We first recall that a probability measure $\nu$ is said to satisfy a {\it logarithmic Sobolev inequality} with constant $c$
if, for any differentiable funtion $f$ in $L^2(\nu),$
$$ \int f^2 \log \frac{f^2}{\int f^2 d\nu} d\nu \le 2c \int |f^\prime|^2 d\nu.$$
It is well known that  a logarithmic Sobolev inequality implies sub-gaussian tails and concentration estimates.
The concentration properties of the measure $\nu$ that will be useful in the proofs  are detailed in Section \ref{section.concentration.estimates}
of the Appendix.\\
 In the orthonormalised model, if 
$\nu$ is the standard real (resp. complex) Gaussian law, $(u_i^n)_{1\le i\le r}$ follows the
uniform law on the set of orthogonal 
 random vectors  on the unit sphere of $\R^n$ (resp. $\C^n$) and by invariance by conjugation,
the model coincides with the one studied in \cite{benaych-rao.09}.\\
For a general probability measure
$\nu$, the $r$ i.i.d.  
random vectors obtained are not necessarily linearly independent almost surely, so
that the orthonormal vectors described in (2) are not always almost surely
well defined. However, as the dimension goes to infinity, they are
well defined with overwhelming probability when $\nu$ satisfies Assumption \ref{hyponG} .
This means the following: we shall say that a sequence of events $(E_n)_{n \ge 1}$ occurs with {\it overwhelming  probability}\footnote{Note that this is a bit different from what is called {\it overwhelming probability}
by Tao and Vu but will be sufficient for our purpose.}
if there exist two constants  $C,\eta>0$ independent of 
$n$ such that  
  $$ \Pro(E_n) \ge 1-Ce^{-n^\eta}.$$  
Consequently, in the sequel,
we shall restrict ourselves to the event when the model (2) is well defined without mentioning it explicitly.\\

In this work, we  study the asymptotics 
of the  eigenvalues
of $\wtX$ outside  the spectrum of $X_n$.

It has already been observed in similar situations, see \cite{BBP},  that these eigenvalues converge
to the boundary of the support of $X_n$ if the $\theta_i$'s are small enough,
whereas for sufficiently large values of the $\theta_i$'s, 
they stay away from the bulk of $X_n$. 
More precisely, if we let $G_{\mu_X}$ be the Cauchy-Stieltjes transform of $\mu_X$, defined, for  $z<a$ or $z>b,$ 
by the formula
$$G_{\mu_X}(z)=\int\frac{1}{z-x}\ud\mu_X(x) ,$$
then the eigenvalues of $\wtX$ outside  the bulk  converge to the solutions of $ G_{\mu_X}(z)=\theta_i^{-1}$ if they exist.

Indeed, if we let $$\overline\theta:=\ff{\lim_{z\downarrow b}G_{\mu_X}(z)}\ge 0,\qquad \underline\theta:=\ff{\lim_{z\uparrow a}G_{\mu_X}(z)}\le 0$$ and
 $$\rho_{\theta}:=\begin{cases}G_{\mu_X}^{-1}(1/\theta)&\textrm{if }\theta
\in (-\infty,\underline\theta)\cup (\overline\theta,+\infty),\\
a&\textrm{if  }\theta\in [\underline\theta,0),\\
b&\textrm{if   }{\theta}\in (0,\overline\theta],
 \end{cases}
$$
then
we have the following theorem. Let $r_0\in \{0, \ldots, r\}$
 be \st $$\theta_{1}\le \cdots \le\theta_{r_0}<0<\theta_{r_0+1}\le
\cdots\le \theta_r.$$

\begin{Th}\label{241109.17h50cor11} Assume that Hypothesis \ref{hypspec} and Assumption \ref{hyponG} are satisfied.  For all $i\in \{1, \ldots, r_0\}$, we have
 $$\wtl_i^{n}\convas \rho_{\theta_i}$$
and for all $i\in\{{r_0+1}, \ldots, r\}$, $$ \wtl_{n-r+i}^{n}\convas \rho_{\theta_{i}}.$$
Moreover, for all $i>r_0$ (resp. for all $i\ge r-r_0$) independent of $n$,
 $$\wtl_i^n\convas a\qquad\textrm{(resp. $\wtl_{n-i}^n\convas b$).}$$\end{Th}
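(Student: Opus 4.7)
The plan is to localise the eigenvalues of $\wtX$ outside the spectrum of $X_n$ as zeros of an $r\times r$ determinantal equation, to pass to the limit in that equation using concentration of quadratic forms in the $u_i^n$'s, and to handle the eigenvalues that stick to the edges $a,b$ by combining this analysis with Weyl's interlacement. Writing $U_n:=[u_1^n|\cdots|u_r^n]$ and $\Theta:=\diag(\theta_1,\ldots,\theta_r)$ so that $R_n=U_n\Theta U_n^*$, a Schur complement / Sylvester determinant identity gives, for $\lambda$ outside the spectrum of $X_n$, that $\lambda$ is an eigenvalue of $\wtX$ if and only if
\[
\det\bigl(\Theta^{-1}-M_n(\lambda)\bigr)=0,\qquad M_n(\lambda):=U_n^*(\lambda I_n-X_n)^{-1}U_n,
\]
where without loss of generality I assume all $\theta_i\neq0$ (the $\theta_i=0$ statements being tautological).

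The analytic heart is the concentration of $M_n(\lambda)$. Since $\nu$ satisfies a logarithmic Sobolev inequality, the estimates of Section~\ref{section.concentration.estimates} show that each entry $u_i^{n*}(\lambda I_n-X_n)^{-1}u_j^n$ stays close, with overwhelming probability, to $\delta_{ij}\cdot\tfrac{1}{n}\mathrm{Tr}(\lambda I_n-X_n)^{-1}$ (in the orthonormalised model up to a negligible Gram--Schmidt correction). Under Hypothesis~\ref{hypspec}, $\tfrac{1}{n}\mathrm{Tr}(\lambda I_n-X_n)^{-1}\to G_{\mu_X}(\lambda)$ pointwise on $\R\setminus[a,b]$; analyticity of $\lambda\mapsto M_n(\lambda)$ together with a Vitali/Montel argument upgrades this to uniform convergence of $M_n(\lambda)\to G_{\mu_X}(\lambda)I_r$ on compact subsets of $\R\setminus[a,b]$, almost surely. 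Passing to the limit in the determinantal equation yields $\prod_{i=1}^r(\theta_i^{-1}-G_{\mu_X}(\lambda))=0$, whose solutions in $\R\setminus[a,b]$ are exactly the $\rho_{\theta_i}$ with $\theta_i\in(-\infty,\udl\theta)\cup(\ovl\theta,+\infty)$; these are simple zeros because $G_{\mu_X}$ is strictly monotone on each component of $\R\setminus[a,b]$. A Hurwitz/Rouch\'e argument then gives, almost surely for $n$ large: (a) exactly one eigenvalue of $\wtX$ in a small neighbourhood of each such $\rho_{\theta_i}$, converging to it; and (b) no eigenvalue of $\wtX$ on any compact $K\subset\R\setminus[a,b]$ avoiding the $\rho_{\theta_i}$'s. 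Re-indexing by the signs of the $\theta_i$'s yields $\wtl_i^n\to\rho_{\theta_i}$ for $i\in\{1,\ldots,r_0\}$ with $\theta_i<\udl\theta$, and $\wtl_{n-r+i}^n\to\rho_{\theta_i}$ for $i>r_0$ with $\theta_i>\ovl\theta$.

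For the eigenvalues sticking to the edges, Weyl's interlacement applied to $\wtX=X_n+R_n$, with $R_n$ of rank $r$ carrying $r_0$ negative and $r-r_0$ positive eigenvalues, gives
\[
\lambda_{i-r_0}^n \ \le\ \wtl_i^n\ \le\ \lambda_{i+(r-r_0)}^n
\]
with the usual trivial conventions outside the index range. For any fixed $i>r_0$, both bounds tend to $a$ by Hypothesis~\ref{hypspec}, hence $\wtl_i^n\to a$; symmetrically $\wtl_{n-i}^n\to b$ for any fixed $i\ge r-r_0$. The intermediate indices $i\in\{1,\ldots,r_0\}$ with $\theta_i\in[\udl\theta,0)$ (so that $\rho_{\theta_i}=a$) are handled by combining the Weyl upper bound with item (b) above: the number of eigenvalues of $\wtX$ eventually lying below $a-\eps$ is exactly $\#\{j:\theta_j<\udl\theta\}$, so all such $\wtl_i^n\to a=\rho_{\theta_i}$, and symmetrically at the upper edge. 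The main technical obstacle is step~2: establishing concentration of $M_n(\lambda)$ with probability tails summable in $n$ and uniform in $\lambda$ on compacts, so that Borel--Cantelli delivers the almost sure statements. This is routine in the i.i.d.\ model but slightly more delicate in the orthonormalised one because of the non-independence introduced by Gram--Schmidt; it is precisely the content of the concentration appendix.
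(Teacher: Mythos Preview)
Your proposal is correct and follows the same overall strategy as the paper: reduce to the determinantal equation via Lemma~\ref{detf}, use the concentration estimates of Section~\ref{section.concentration.estimates} to show $M_n(\lambda)\to G_{\mu_X}(\lambda)I_r$, localise the zeros, and finish the edge cases by Weyl interlacing combined with the count of outliers. The differences are purely in the technical tools. For the uniformity step, the paper proves its Lemma~\ref{toto} by applying Proposition~\ref{hanson} on a grid of mesh $n^{-1}$ and filling in via a Lipschitz bound on $z\mapsto M_n(z)$, whereas you invoke Vitali/Montel after getting almost sure pointwise convergence on a countable dense set; both work, and yours is slightly slicker but requires checking local boundedness of $\|u_i^n\|$ (which the appendix provides). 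For the zero--localisation step, the paper argues on the real line (``close continuous functions have close zeros'') and then handles repeated $\theta_i$'s by a separate approximation argument borrowed from \cite{benaych-rao.09}, while your Hurwitz/Rouch\'e argument counts zeros with multiplicity directly and so absorbs the repeated case without extra work; note that reality of the relevant zeros of $f_n$ follows from $f_n(\bar z)=\overline{f_n(z)}$, which you should state explicitly. Your explicit use of the two--sided Weyl bound $\lambda_{i-r_0}^n\le\wtl_i^n\le\lambda_{i+(r-r_0)}^n$ for the sticking eigenvalues is cleaner than what the paper writes out in Section~\ref{sec.ps}, where this step is left implicit.
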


The uniform case was proved in
 \cite[Theorem 2.1]{benaych-rao.09} and  we will follow a similar strategy to prove 
Theorem \ref{241109.17h50cor11} under our assumptions in Section \ref{sec.ps}.\\

The main object of this paper is to study the fluctuations
of the extreme eigenvalues of $\wtX.$ Precise statements will be given in Theorems \ref{theogauss}, \ref{942010.1}, \ref{theostick}, \ref{exact1d}
 and   \ref{corstick}. For any $x$ such that $x \le a$ or $x \ge b,$ we denote by  $I_x$ the set of indices $i$ such that 
$\rho_{\theta_i} = x.$ The results roughly state as follows. 
\begin{Th}\label{mainint}
Under  additional  hypotheses,
\begin{enumerate}
\item 
 Let  $\alpha_1<\cdots<\alpha_q$ be the different values
of the $\theta_i$'s \st  $\rho_{\theta_i}\notin\{a,b\}$ and denote, for each $j$, 
 $k_j=|I_{\rho_{\alpha_j}}|$ and $q_0$ the largest index so that $\alpha_{q_0}< 0$.
Then, the law
of the random vector $$\left(\sqrt{n}(\wtl_{i}^n-\rho_{\alpha_j}), i\in I_{\rho_{\alpha_j}}
\right)_{
1\le j\le q_0}\cup \left(\sqrt{n}(\wtl_{n-r+i}^n-\rho_{\alpha_j}),
 i\in I_{\rho_{\alpha_j}} \right)_{q_0+1\le j\le q}$$   converges to the law of the eigenvalues 
of $(c_{\alpha_j} M_{j})_{1\le j\le q}$ with the $M_{j}$'s being independent
matrices  following the 
law of a $k_j\times k_j$ matrix
from the GUE or the GOE, depending whether
$\nu$ is supported on the complex plane or the real line.
The constant $c_{\alpha_j}$ is explicitly defined  in Equation \eqref{28110.1}.\\

\item If none of the $\theta_i$'s are critical (i.e. equal to $\udl{\tta}$ or $\ovl{\tta}$), with overwhelming  probability,
the extreme eigenvalues  converging to $a$ or $b$ are at
distance at most $n^{-1+\e}$ of the extreme eigenvalues of
$X_n$ for some $\e>0$.\\

\item If $r=1$ and $\theta_1 = \theta >0$, we have the following more precise picture
about the extreme eigenvalues:\\
\begin{itemize}
 \item If $\rho_\theta>b$, $\sqrt{n}(\widetilde\lambda^n_n-\rho_\theta)$
converges towards a Gaussian variable, whereas $n^{1-\e}(\widetilde\lambda^n_{n-i}
-\lambda_{n-i+1})$ vanishes  in probability as $n$ goes to infinity for any fixed $i\ge 1$
and some $\e>0$. \\
\item If $\rho_\theta=b$ and $\theta \neq \overline \theta,$  $n^{1-\e}(\widetilde\lambda^n_{n-i}
-\lambda_{n-i})$  vanishes  in probability as $n$ goes to infinity for any fixed $i\ge 1$
and some $\e>0$.\\
\item For any fixed $j\ge 1$, $n^{1-\e}(\widetilde\lambda^n_{j}
-\lambda_{j})$  vanishes  in probability as $n$ goes to infinity for  some $\e>0$.
\end{itemize}
\end{enumerate}
\end{Th}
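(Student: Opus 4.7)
The unifying tool is a determinantal identity: writing $R_n = U_n\Theta U_n^*$ with $U_n=[u_1^n\,|\,\cdots\,|\,u_r^n]$ and $\Theta=\diag(\theta_1,\dots,\theta_r)$, any number $z$ outside the spectrum of $X_n$ is an eigenvalue of $\wtX$ if and only if
\[
\det\bigl(\Theta^{-1}-M_n(z)\bigr)=0,\qquad M_n(z):=U_n^*(z-X_n)^{-1}U_n\in \C^{r\times r}.
\]
The log-Sobolev concentration on $\nu$ forces each entry of $M_n(z)$ to be close to its mean, and for every fixed $z\notin[a,b]$ one has $M_n(z)\to G_{\mu_X}(z)\,I_r$; this is the engine already used to prove Theorem~\ref{241109.17h50cor11}, since the deterministic equation $\det(\Theta^{-1}-G_{\mu_X}(z)I_r)=0$ is solved exactly at the $\rho_{\theta_i}$'s. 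All three parts of the theorem are obtained by a finer analysis of this same secular equation at the scale appropriate to the regime.

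For part (1), I would localise around each $\rho_{\alpha_j}\notin\{a,b\}$ by setting $z=\rho_{\alpha_j}+s/\sqrt n$ and focusing on the $k_j\times k_j$ block indexed by $I_{\rho_{\alpha_j}}$. A matrix central limit theorem for the quadratic forms $\langle u_i^n,(z-X_n)^{-1}u_j^n\rangle$, whose variance is controlled by $\sigma_{\alpha_j}^2:=\int(x-\rho_{\alpha_j})^{-2}\,\ud\mu_X(x)$ and whose joint limit is GOE or GUE depending on whether $\nu$ is real or complex, shows that on this block
\[
\sqrt n\bigl(M_n(\rho_{\alpha_j})-G_{\mu_X}(\rho_{\alpha_j})I_r\bigr)\;\Longrightarrow\;\sigma_{\alpha_j}H_j,
\]
with $H_j$ a standard GOE or GUE matrix of size $k_j$, the $H_j$'s independent across $j$ by a direct covariance computation. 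Combining this with the Taylor expansion $G_{\mu_X}(\rho_{\alpha_j}+s/\sqrt n)=1/\alpha_j-\sigma_{\alpha_j}^2\,s/\sqrt n+O(1/n)$ in the secular equation, the implicit function theorem identifies the rescaled eigenvalues $\sqrt n(\wtl^n_\cdot-\rho_{\alpha_j})$ with the eigenvalues of $c_{\alpha_j}H_j$ for the explicit constant of \eqref{28110.1}; in the orthonormalised model the Gram-Schmidt corrections to the $u_i^n$ are of order $1/n$ and do not affect the limit.

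For the sticking statements (2) and (3), the same secular equation is analysed at distance $n^{-1+\e}$ from the spectrum of $X_n$. In the rank-one case it reduces to
\[
\frac{1}{\theta}=\sum_{k=1}^n\frac{|\langle u^n,e_k^n\rangle|^2}{z-\lambda_k^n},
\]
where $(e_k^n)$ is an eigenbasis of $X_n$. Log-Sobolev concentration also yields the delocalisation bound $\max_k|\langle u^n,e_k^n\rangle|^2\le n^{-1+\e}$ with overwhelming probability. When $\theta$ is subcritical, writing $z=\lambda_{n-i}^n+\delta$ with $|\delta|\gg n^{-1+\e}$ makes the singular term $|\langle u^n,e_{n-i}^n\rangle|^2/\delta$ negligible, while the remaining sum stays close to the regular value $G_{\mu_X}(\lambda_{n-i}^n)\ne 1/\theta$; this contradiction pins $\wtl_{n-i}^n$ within $n^{-1+\e}$ of $\lambda_{n-i}^n$. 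In the supercritical branch of (3), the unique root strictly above $\lambda_n^n$ is absorbed by the escaping eigenvalue $\wtl_n^n$, and interlacing together with the same singular analysis in each gap $(\lambda_{n-i}^n,\lambda_{n-i+1}^n)$ yields the shifted sticking $\wtl_{n-i}^n\approx\lambda_{n-i+1}^n$. Part (2) proceeds analogously via the $r\times r$ determinant and a block version of the singular-term analysis.

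The main technical obstacle is obtaining the rate $n^{-1+\e}$ uniformly near the edge. This requires a delocalisation bound on $u^n$ relative to the (possibly random) eigenbasis of $X_n$, together with Lipschitz control of the regular part of the secular equation over a net at distance $n^{-1+\e}$ from the spectrum of $X_n$. Such uniform control demands a non-clustering or minimal-spacing input on the $\lambda_k^n$'s near the edges, which is precisely where the \emph{additional hypotheses} of the precise sticking theorems (Theorems~\ref{theostick}, \ref{exact1d} and \ref{corstick}) enter.
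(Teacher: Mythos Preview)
Your overall strategy matches the paper's: the determinantal secular equation, a CLT for the quadratic forms $\langle u_s^n,(z-X_n)^{-1}u_t^n\rangle$ for part~(1), and concentration plus delocalisation near the edge for parts~(2) and~(3). Two points, however, need correction.

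\textbf{The Gram--Schmidt corrections are of order $n^{-1/2}$, not $n^{-1}$, and they do affect the limit.} The paper's Proposition~\ref{convm} shows $\|W^n-I\|_\infty=O(n^{-1/2})$ with overwhelming probability, and since the fluctuations you are capturing are themselves at scale $n^{-1/2}$, these corrections enter the limiting variance. This is precisely why the constant $c_\alpha$ in \eqref{28110.1} has two different formulas for the i.i.d.\ and orthonormalised models. Concretely, in the orthonormalised model one must write $u_s^n$ as a linear combination of the $g_t^n$'s and track the contribution of the normalising factor $\|g_s^n\|_2^2/n-1$, which is itself a $\sqrt n$-fluctuating quantity; the paper does this in the proof of Theorem~\ref{3110.23h27} by reducing to the centred matrix $B^n(s,t)=A^n(s,t)-\frac{1}{n}\tr A^n(s,t)$.

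\textbf{For the sticking analysis, isolating a single ``singular term'' is not enough.} When $z$ sits in a gap near the edge, it is not only $|\langle u^n,e_{n-i}^n\rangle|^2/(z-\lambda_{n-i}^n)$ that blows up; every eigenvalue within distance $n^{-1+\alpha'}$ of $z$ contributes a potentially large term, and there can be many of them. The paper's device is to fix a cutoff $m_n=O(n^\alpha)$ and split via the orthogonal projection $P$ onto the eigenspace of $\lambda_{m_n+1}^n,\dots,\lambda_n^n$: the edge piece $\langle u_s^n,(I-P)(z-X_n)^{-1}(I-P)u_s^n\rangle$ is bounded by $n^{1-\alpha'}\|(I-P)u_s^n\|_2^2\approx n^{1-\alpha'}\cdot m_n/n=O(n^{\alpha-\alpha'})$, while the bulk piece $\langle u_s^n,P(z-X_n)^{-1}Pu_s^n\rangle$ concentrates around $1/\underline\theta$ (or $1/\overline\theta$). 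The concentration of this bulk piece is exactly where Hypothesis~\ref{H3a} enters, via the bounds on $\sum_{i>m_n}(\lambda_p^n-\lambda_i^n)^{-2}$ and, for the off-diagonal entries in rank $r>1$, on $\sum_{i>m_n}(\lambda_p^n-\lambda_i^n)^{-4}$. Your last paragraph correctly anticipates that edge-spacing input is needed, but the projector decomposition is the mechanism that actually converts it into the $n^{-1+\epsilon}$ rate, and your sketch should make this explicit rather than treating the non-singular remainder as automatically close to $G_{\mu_X}$ at the edge.
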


These different behaviours are illustrated in Figure \ref{fig_BBP_trans} below.

 \begin{figure}[h!]
\centering
\subfigure[Case where  $\theta=0.5$]{
\includegraphics[width=2.8in]{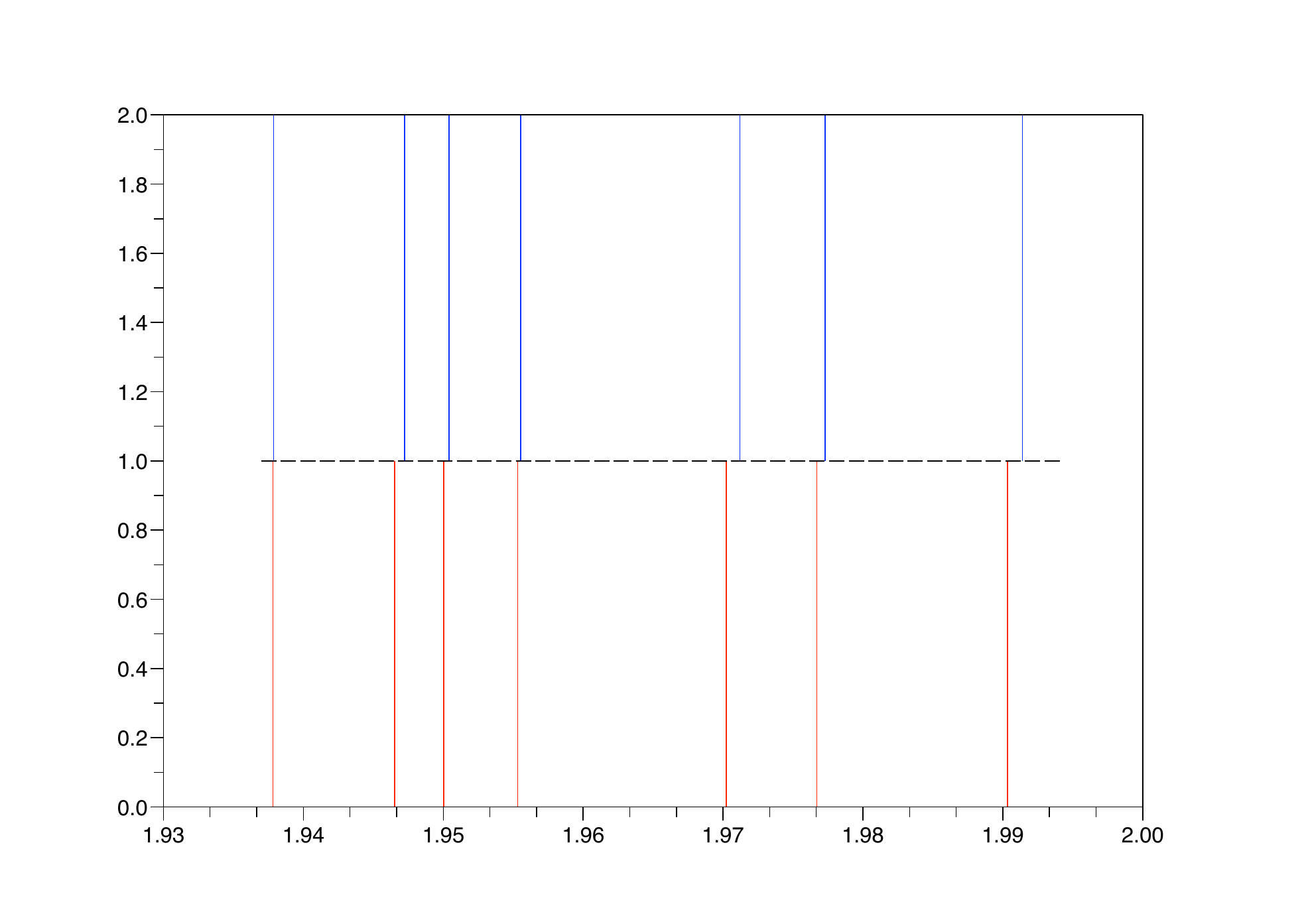}
\label{VPmax_GOE_def_below_seuil}} 
\subfigure[Case where $\theta=1.5$]{
\includegraphics[width=2.8in]{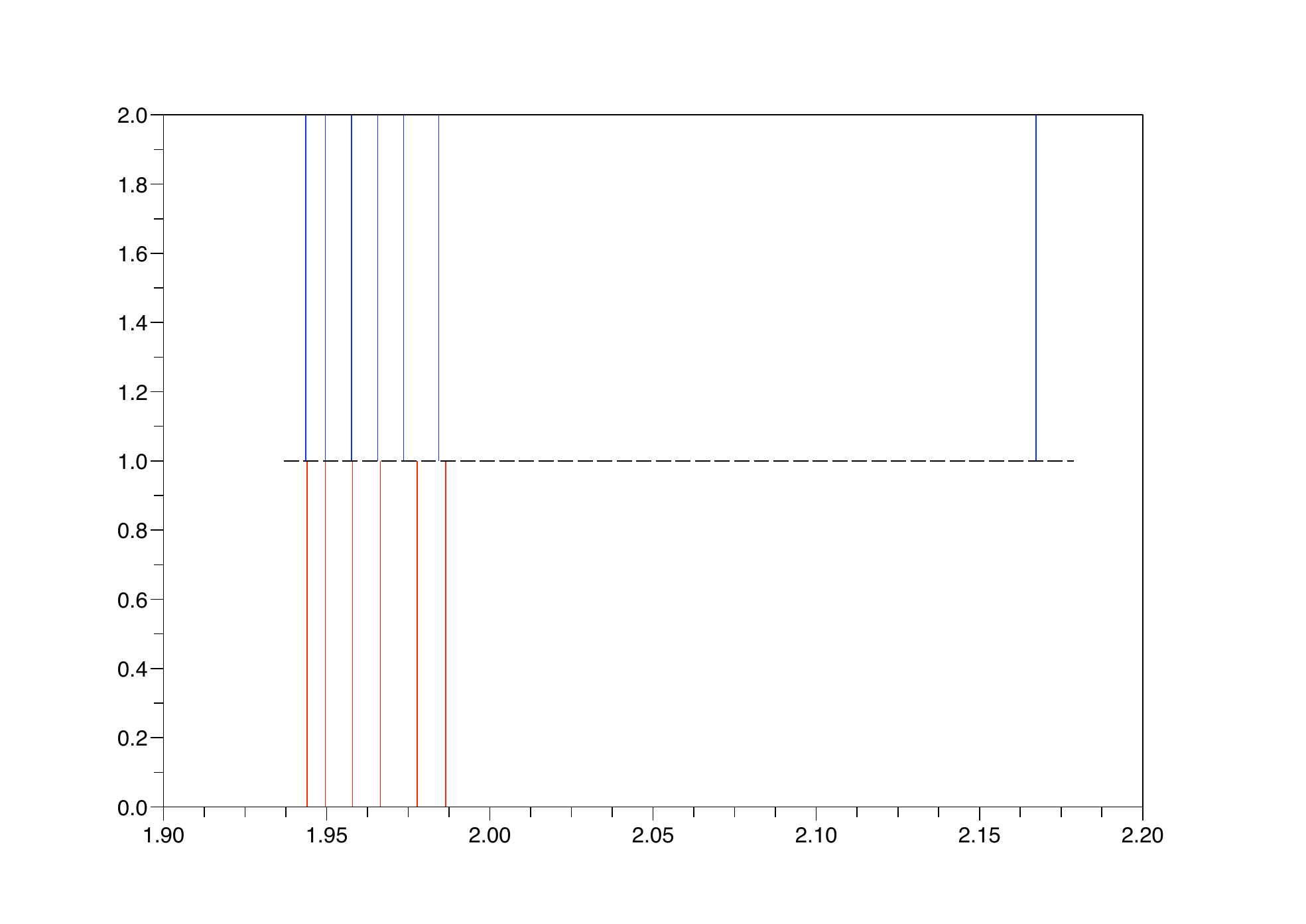}
\label{VPmax_GOE_def_below_seuil}
}

\caption{{\bf Comparison between the largest eigenvalues of a GUE matrix and those of the same matrix perturbed:}   the abscises of the vertical segments correspond to the largest eigenvalues of $X$, a GUE matrix with size $2.10^3$ (under the dotted line) or to those of    $\widetilde{X_n}=X+\operatorname{diag}(\theta, 0, \ldots, 0)$ (above the dotted line).   
   In the left picture, $\theta=0.5<\overline{\theta}=1$ and as predicted, $\widetilde{\lambda}_1\approx b=2$,  
  whereas in the right one,  $\theta =1.5>\overline{\theta}$, which indeed implies that $\widetilde{\lambda}_1\approx \rho_\theta =\theta+\frac{1}{\theta}=2.17$ 
  and  $\widetilde{\lambda}_2\approx b$.
  Moreover, in the left picture, we have, for all $i$,  $\widetilde{\lambda}_i\approx \lambda_i$, with some deviations  $$|\widetilde{\lambda}_i-\lambda_i|\ll \textrm{deviation of  $\lambda_i$ from its  limit $2$}.$$ In the same way, in the right picture,  $i$, $\widetilde{\lambda}_{i+1}\approx \lambda_i$, with some deviations $$|\widetilde{\lambda}_{i+1}-\lambda_i|\ll \textrm{deviation of  $\lambda_i$ from its  limit $2$}.$$
  At last, here, in the right picture, we have $\widetilde{\lambda}_1\approx 2.167$, which gives $\frac{\sqrt{n}(\widetilde{\lambda}_1-\rho_\theta)}{c_\theta}\approx 0.040$, reasonable value for a standard Gaussian variable.}
\label{fig_BBP_trans}
\end{figure}

The first part of this theorem will be proved in Section \ref{sec.away}, whereas Section \ref{sec.sticking} will
 be devoted to the study of the eigenvalues sticking to the bulk, i.e. to the proof of the second and third parts of the theorem.\\
Moreover, our results can be easily generalised to non-deterministic self-adjoint matrices $X_n$
that satisfy our hypotheses with probability tending to one. This will allow us to study in Section
\ref{secAppl} the deformations of various classical models. This will include
the study of the Gaussian fluctuations away from the bulk for rather general Wigner and  Wishart
matrices, 
hence providing a new proof of the first part 
of  \cite[Theorem 1.1]{FP07} and of  \cite[Theorem 3.1]{baiyao2005}
but also a new generalisation to non-white ensembles.  The study of
the eigenvalues that
stick to the bulk requires a finer control
on the eigenvalues of $X_n$ in the vicinity 
of the  edges of the bulk, which  we prove for 
random matrices such as Wigner  
and Wishart matrices with entries having a sub-exponential tail.
This result complements 
\cite[Theorem 1.1]{FP07},
where the  fluctuations of the largest eigenvalue
of a non-Gaussian Wishart matrix perturbed by a delocalised but deterministic
rank one perturbation  was studied. 
One should remark that our result depends 
very little on the law $\nu$ (only through its fourth moment in fact).

Our approach is based upon a determinant computation
(see  Lemma \ref{detf}),
which shows  that the eigenvalues of $\wtX$ we are interested in
are the 
solutions   of the equation 
\begin{equation}\label{eqint} f_n(z):= \det \lf(\lf[
G_{s,t}^{n}(z)
\ri]_{s,t=1}^{ r}-\diag(\theta_1^{-1},\ldots, \theta_r^{-1})\ri)=0,
\end{equation}
with
\be\label{135111}G_{s,t}^n(z):=\langle u^n_s,(z-X_n)^{-1} u^n_t\rangle,\ee
where $\langle \cdot, \cdot \rangle$ denotes the usual scalar product in $\C^n.$\\
By the law of large numbers for i.i.d. vectors, by \cite[Proposition 9.3]{benaych-rao.09} for uniformly distributed vectors or by applying
Theorem \ref{3110.23h27} (with $A^n=(z-X_n)^{-1}$), it is easy to
see that for any $z$ outside  the bulk,
$$\lim_{n\ra \infty} G_{s,t}^n(z)= \one_{s=t} G_{\mu_X}(z)$$
and hence  it is clear that one should expect the eigenvalues of $\wtX$
outside of the bulk to  converge to the solutions of $ G_{\mu_X}(z)=\theta_i^{-1}$ if they exist.
Studying the fluctuations of these eigenvalues amounts to analyse the
behavior of the solutions of  \eqref{eqint} around their limit. Such an approach 
was already developed in several papers (see e.g \cite{bai-yao-TCL} 
or \cite{CDF09}). However, to our knowledge,
the model we consider, with a fixed deterministic matrix $X_n$, was not yet studied 
and the fluctuations of the eigenvalues  which stick to the bulk of $X_n$ was never achieved
in such a generality.

For the sake of clarity, throughout the paper, we will call  
 ``hypothesis'' any hypothesis we need to make on the deterministic
part of the model $X_n$ and ``assumption'' any hypothesis we need to make  on the deformation $R_n.$\\
Moreover, because of concentration considerations that are developed in the Appendix of the paper, the proofs will be quite similar 
in the i.i.d. and orthonormalised models. Therefore, we will detail each proof in the i.i.d. model, which is simpler and then check
that the argument is the same in the orthonormalised model or detail the slight changes to make in the proofs.

{\bf Notations.} For the sake of clarity, we   recall here the main notations of the paper:\\ \\
$\bullet$ $\la_1^n\le \cdots\cdots\le \la_n^n$ are the eigenvalues of the deterministic   matrix $X_n$,\\ \\
$\bullet$ $\wtl_1^n\le \cdots\cdots\le\wtl_n^n$ are the eigenvalues of the perturbed   matrix $\wtX=X_n+\sum_{i=1}^r \tta_iu^n_iu_i^{n^*}$, where $r$ and the $\tta_i$'s are independent of $n$ and deterministic and  the column vectors $u_i^n$ are random and defined above,\\ \\
$\bullet$ 
$r_0\in \{0, \ldots, r\}$
 is \st $\theta_{1}\le \cdots \le\theta_{r_0}<0<\theta_{r_0+1}\le
\cdots\le \theta_r$,  \\ \\
$\bullet$ for $z$ out of the spectrum of $X_n$, $G^n_{s,t}(z)=\lan u^n_s, (z-X_n)^{-1} u^n_t\ran$,\\ \\
$\bullet$ for $z$ out of the support of $\mu$, $G_{\mu_X}(z)=\int\frac{1}{z-x}\ud\mu_X(x)$,\\ \\
$\bullet$ 
  $\overline\theta=\ff{\lim_{z\downarrow b}G_{\mu_X}(z)}\ge 0$ and $\underline\theta=\ff{\lim_{z\uparrow a}G_{\mu_X}(z)}\le 0$,\\ \\
$\bullet$ 
for any non null $\tta$, 
 $$\rho_{\theta}=\begin{cases}G_{\mu_X}^{-1}(1/\theta)&\textrm{if }\theta
\in (-\infty,\underline\theta)\cup (\overline\theta,+\infty),\\
a&\textrm{if  }\theta\in [\underline\theta,0),\\
b&\textrm{if   }{\theta}\in (0,\overline\theta],
 \end{cases}
$$
$\bullet$ 
$p_+$ is the number of $i$'s \st $\rho_{\theta_i}>b$,  $p_-$ is the number of $i$'s \st $\rho_{\theta_i}<a$ and $\alpha_1<\cdots<\alpha_q$ are the different values
of the $\theta_i$'s \st  $\rho_{\theta_i}\notin\{a,b\}$ (so that $q\le p_-+p_+$, with equality in the particular case where the $\tta_i$'s are pairwise distinct),\\ \\
$\bullet$  $\gamma_1^n, \ldots\ldots \gamma_{p_-+p_+}^n$ are the rescaled differences between the eigenvalues with limit out of $[a,b]$ and their limits:  \[  \gamma_i^n=\begin{cases}\sqrt{n}(\wtl_i^n-\rho_{\tta_i})&\mbox{if }\;1\le i\le p_-,\\ \\
\sqrt{n}(\wtl_{n-(p_-+p_+)+i}^n-\rho_{\tta_{r-(p_-+p_+)+i}})&\mbox{if }\; p_-<i\le p_-+p_+,\end{cases}
\]
$\bullet$ for any $x$ such that $x \le a$ or $x \ge b,$    $I_x$ is the set of indices $i$ such that 
$\rho_{\theta_i} = x$,\\ \\
$\bullet$ for any $j=1, \ldots, q$, $k_j$ is the number of indices $i$ \st $\tta_i=\al_j$, i.e. $k_j=|I_{\rho_{\al_j}}|$.

\section{Almost sure convergence of the extreme eigenvalues}
\label{sec.ps}
For the sake of completeness, in this section, we prove  
Theorem \ref{241109.17h50cor11}. In fact, we shall even prove the more general following result.

\begin{Th}\label{241109.17h50} Assume that Hypothesis \ref{hypspec} and Assumption \ref{hyponG} are satisfied. 

Let us fix, independently of $n$, an integer  $i\ge 1$   and   $V$,   a neighborhood of $\rho_{\tta_i}$ if $i\le r_0$  and of $a$ if $i>r_0$.
 Then $\wtl_i^{n}\in V$ with overwhelming probability. 
 
 The analogue result exists for largest eigenvalues:  for any fixed integer  $i\ge 0$   and   $V$, a neighborhood of $\rho_{\tta_{r-i}}$ if $i< r-r_0$  and of $b$ if $i\ge r-r_0$, $\wtl_{n-i}^{n}\in V$ with overwhelming probability.\end{Th}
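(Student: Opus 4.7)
The proof splits by whether $\rho_{\tta_i}$ equals $a$ (or $b$) or lies strictly outside $[a,b]$: the former case is deterministic via Cauchy interlacement for rank-$r$ perturbations, while the latter uses the determinantal characterisation of Lemma \ref{detf} combined with Rouch\'e's theorem, after a concentration step for the bilinear forms $G_{s,t}^n(z)$.

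\textbf{Sticking indices.} Whenever the theorem asks for $\wtl_i^n\in V$ with $V$ a neighbourhood of $a$---either $i>r_0$, or $p_-<i\le r_0$ so that $\rho_{\tta_i}=a$ by definition---the Cauchy interlacement for rank-$r$ perturbations yields
\[ \la_{\max(1,i-r)}^n \;\le\; \wtl_i^n \;\le\; \la_{\min(n,i+r)}^n. \]
For fixed $i,r$, both bounds tend to $a$: the lower one because $\la_1^n\to a$, and the upper one because the convergence $\mu_n\to\mu_X$ with $a=\inf\operatorname{supp}\mu_X$ implies $\mu_n([a,a+\delta])$ has positive liminf for any $\delta>0$, which forces $\la_k^n\to a$ for every fixed $k$. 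Thus $\wtl_i^n\in V$ deterministically for $n$ large enough. The $b$-side is symmetric.

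\textbf{Escaping indices.} For $i\le p_-$, set $\al_j:=\rho_{\tta_i}<a$ and $k_j:=|I_{\rho_{\al_j}}|$, and introduce
\[ g(z) := \prod_{\ell=1}^r \lf(G_{\mu_X}(z)-\tta_\ell^{-1}\ri), \]
which is holomorphic on a complex neighbourhood of $\rho_{\al_j}$ and vanishes there to order exactly $k_j$. Pick a small open disk $D\subset\C\setminus[a-\delta,b+\delta]$ centred at $\rho_{\al_j}$. For $z\in\partial D$, the resolvent $A_z:=(z-X_n)^{-1}$ has operator norm uniformly bounded in $n$ and $z$, so the bilinear forms $G_{s,t}^n(z)=\lan u_s^n,A_z u_t^n\ran$ concentrate around $\one_{s=t}G_{\mu_X}(z)$ with overwhelming probability, via the log-Sobolev concentration of Section \ref{section.concentration.estimates}. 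A finite net of $\partial D$, together with the Lipschitz dependence of $A_z$ in $z\in\partial D$, upgrades this to $\sup_{z\in\partial D}|f_n(z)-g(z)|\to 0$ with overwhelming probability. Since $|g|$ is bounded below by a positive constant on $\partial D$, Rouch\'e's theorem yields exactly $k_j$ zeros of $f_n$ in $D$ with overwhelming probability; by Hypothesis \ref{hypspec}, $\operatorname{Spec}(X_n)\subset[a-\delta,b+\delta]$ with overwhelming probability, so these zeros lie outside $\operatorname{Spec}(X_n)$ and are therefore eigenvalues of $\wtX$ by Lemma \ref{detf}. Summing over distinct $\al_j<\udl{\tta}$ produces $p_-$ eigenvalues of $\wtX$ strictly below $a-\delta$; since the sticking argument above confines every $\wtl_i^n$ with $i>p_-$ to a neighbourhood of $a$, these must be $\wtl_1^n,\dots,\wtl_{p_-}^n$, and the monotonicity of $\tta\mapsto\rho_\tta$ on $(-\infty,\udl\tta)$ matches them to the correct indices. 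The $b$-side is symmetric.

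\textbf{Main obstacle.} The crux is the overwhelming-probability uniform concentration of the $r\times r$ matrix $(G_{s,t}^n(z))$ along $\partial D$ in both the i.i.d.\ and orthonormalised models. Diagonal entries in the i.i.d.\ model are quadratic forms in i.i.d.\ sub-Gaussian coordinates; the log-Sobolev inequality then gives Hanson--Wright type deviations at scale $n^{-1/2}$, and off-diagonal entries reduce to the diagonal case by polarisation. In the orthonormalised model one must additionally control the Gram--Schmidt correction, by showing $\|v_i^n\|^2-1$ and $\lan v_s^n,v_t^n\ran$ are $O(n^{-1/2+o(1)})$ with overwhelming probability---both consequences of the log-Sobolev inequality, established in the appendix. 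Once this concentration is in hand, the Rouch\'e step and the counting argument are model-independent.
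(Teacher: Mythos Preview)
Your approach and the paper's coincide at the analytic core: both rest on showing that $M_n(z)=[G_{s,t}^n(z)]-\diag(\tta_s^{-1})$ converges uniformly to the diagonal matrix $M(z)=\diag(G_{\mu_X}(z)-\tta_s^{-1})$ on $\{d(z,[a,b])>\delta\}$ with overwhelming probability (this is the paper's Lemma~\ref{toto}, proved by the same concentration-plus-net-plus-Lipschitz scheme you outline). The paper then argues informally that ``close continuous functions on an interval have close zeros'' when the $\tta_i$'s are distinct, and handles repeated $\tta_i$'s by an approximation procedure borrowed from~\cite{benaych-rao.09}; your Rouch\'e argument on small circles around each $\rho_{\al_j}$ deals with multiplicity $k_j$ directly and is, if anything, tidier.

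There is, however, a genuine gap in your sticking step. Cauchy interlacement for a rank-$r$ perturbation gives $\la_{i-r}^n\le\wtl_i^n\le\la_{i+r}^n$ with the convention $\la_j^n=-\infty$ for $j<1$; it does \emph{not} give $\la_{\max(1,i-r)}^n\le\wtl_i^n$ (take $X_n=0$ and $R_n=\diag(-1,-1,0,\dots,0)$: then $\wtl_1^n=-1<0=\la_1^n$). Consequently, for $p_-<i\le r_0$ your two-sided squeeze supplies only the upper bound $\wtl_i^n\le\la_{i+r}^n\to a$; the lower bound is vacuous. This makes the final identification circular: you invoke ``the sticking argument confines every $\wtl_i^n$ with $i>p_-$ to a neighbourhood of $a$'' to label the Rouch\'e zeros as $\wtl_1^n,\dots,\wtl_{p_-}^n$, but that confinement is precisely what is unproved for $p_-<i\le r_0$.

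The fix stays inside your framework. First bound $\|\wtX\|\le\|X_n\|+\|R_n\|=O(1)$ with overwhelming probability (trivial in the orthonormalised model; via concentration of $\|u_i^n\|^2$ in the i.i.d.\ model). Then extend your uniform estimate $\sup|f_n-g|\to0$ from the circles $\partial D$ to the compact set $K=[-M,a-\delta]\setminus\bigcup_j D_j$; since $g$ is bounded away from zero on $K$, so is $f_n$ with overwhelming probability. Hence there are \emph{exactly} $p_-$ eigenvalues of $\wtX$ below $a-\delta$, which forces $\wtl_i^n\ge a-\delta$ for all $i>p_-$ and closes the argument. The paper's Lemma~\ref{toto} already provides the uniform bound over the full region $\{d(z,[a,b])>\delta\}$, which is why this issue does not arise there.
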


By Lemma \ref{detf},   the eigenvalues of $\wtX$ which are not in the spectrum of $X_n$ are the solutions of the equation $$\det(M_n(z))=0,$$ with$$ M_n(z) =\lf[
G_{s,t}^{n}(z)
\ri]_{s,t=1}^{ r}-\diag(\theta_1^{-1},\ldots, \theta_r^{-1}),$$ the functions $G_{s,t}^n(\cdot)$ being defined in \eqref{135111}. For $z$ out of the support of $\mu_X$, let us introduce the $r\times r$ matrix $$ M(z) :=  \diag(G_{\mu_X}(z)-\theta_1^{-1},\ldots\ldots, G_{\mu_X}(z)-\theta_r^{-1}).$$ 
The key point, to prove Theorem \ref{241109.17h50}, is the following lemma.
For $A=[A_{i,j}]_{i,j=1}^r$ and $r\times r$ matrix, we set $|A|_\infty:=\sup_{i,j} |A_{i,j}|$. 

\begin{lem}\label{toto} Assume that Hypothesis \ref{hypspec} and Assumption \ref{hyponG} are satisfied. 
For any  $\delta, \eps>0,$ with overwhelming probability, $$\sup_{z, \, d(z, [a,b]) >\delta} |M(z)-M_n(z)|_\infty \le \eps.$$\end{lem}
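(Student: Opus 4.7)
The plan is to combine convergence of the means of the entries of $M_n(z)$, which follows from Hypothesis \ref{hypspec}, with concentration of the random bilinear/quadratic forms $G^n_{s,t}(z)$ around their means with overwhelming probability, which follows from the logarithmic Sobolev inequality of Assumption \ref{hyponG}. First, I would reduce to a compact set in $z$: for $|z|\ge 2\sup_n \lVert X_n\rVert$ (finite by Hypothesis \ref{hypspec}), one has $\lVert (z-X_n)^{-1}\rVert\le 2/|z|$, hence both $|G_{\mu_X}(z)|$ and $|G^n_{s,t}(z)|$ are bounded by $C\max_i \lVert u^n_i\rVert^2/|z|$, which is $O(1/|z|)$ with overwhelming probability since $\lVert u^n_s\rVert^2 = \frac{1}{n}\sum_i |x_i|^2$ concentrates around $1$ via log-Sobolev. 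Choosing $R$ large, it suffices to bound the supremum over the compact set $K:=\{z:d(z,[a,b])\ge \delta,\ |z|\le R\}$.

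Next I would fix $z\in K$ and analyse each entry of $M_n(z)$ pointwise. Setting $A:=(z-X_n)^{-1}$ one has $\lVert A\rVert\le 1/\delta$ and $\lVert A\rVert_F^2 \le n/\delta^2$. For the diagonal entries, $\E[G^n_{s,s}(z)]=n^{-1}\Tr(A)=\int(z-x)^{-1}\ud\mu_n(x)$ tends to $G_{\mu_X}(z)$ by Hypothesis \ref{hypspec}, and the fluctuation $G^n_{s,s}(z)-\E[G^n_{s,s}(z)]=\frac{1}{n}(x^{(s)*}Ax^{(s)}-\Tr A)$ is a centred quadratic form in i.i.d.\ log-Sobolev entries. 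A Hanson--Wright-type estimate, precisely of the form collected in Section \ref{section.concentration.estimates} of the Appendix, shows that this fluctuation exceeds $\varepsilon$ with probability at most $\exp(-c\min(n\varepsilon^2\delta^2,n\varepsilon\delta))$. For $s\neq t$, independence of $u^n_s$ and $u^n_t$ yields $\E[G^n_{s,t}(z)]=0$, and the same tools applied to the bilinear form $\frac{1}{n}x^{(s)*}Ax^{(t)}$ give an analogous bound. Both deviations are of overwhelming type.

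To pass from pointwise to uniform, I would use a standard net argument. On the event $\{\max_s \lVert u^n_s\rVert \le 2\}$, which has overwhelming probability, both $M(z)$ and $M_n(z)$ are holomorphic on $K$ with $z$-derivatives bounded by some $L=L(\delta,R)$. Taking an $(\varepsilon/(2L))$-net in $K$ of polynomial cardinality, a union bound applied to the pointwise concentration above transfers the estimate to every net point with overwhelming probability, and the Lipschitz bound then extends it to all of $K$. In the orthonormalised model, the Gram--Schmidt corrections to the i.i.d.\ vectors are of order $n^{-1/2}$ with overwhelming probability, since the norms and mutual inner products of i.i.d.\ copies of $v^n$ concentrate sharply around $1$ and $0$ respectively by log-Sobolev, so the i.i.d.\ analysis transfers up to a negligible error.

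The main obstacle is the quadratic/bilinear concentration step: one needs overwhelming-probability tail bounds of the form $1-Ce^{-n^\eta}$ that depend on $A$ only through $\lVert A\rVert$ and $\lVert A\rVert_F$, and that are robust enough to survive the union bound over the $\varepsilon$-net. This is exactly what the appendix is designed to provide; the remaining ingredients (the tail in $z$, the weak convergence of means and the net argument) are routine.
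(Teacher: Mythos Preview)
Your proposal is correct and follows essentially the same route as the paper: reduce to a compact region in $z$, apply the Hanson--Wright concentration from the Appendix pointwise to control $G^n_{s,t}(z)-\one_{s=t}\frac{1}{n}\Tr((z-X_n)^{-1})$, then extend to the full region via a finite net and a Lipschitz bound on $z\mapsto M_n(z)$, with the orthonormalised model handled by the Gram--Schmidt perturbation estimates of Proposition~\ref{convm}. The only slip is the exponent in your concentration bound (it should read $\exp(-c\min(n\eps^2\delta^2,\sqrt{n}\,\eps\delta))$ rather than $n\eps\delta$ in the second argument), but this is still overwhelming and does not affect the argument.
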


In the case where the $\tta_i$'s are pairwise distinct, Theorem \ref{241109.17h50} follows directly from this lemma, because the $z$'s \st $\det(M(z))=0$ are precisely the $z$'s \st for some $i$, $G_{\mu_X}(z)=\ff{\tta_i}$ and because close continuous   functions on an interval have close zeros. The case where the $\tta_i$'s are not pairwise distinct can then be deduced
by an approximation procedure similar to the one of Section 6.2.3 of \cite{benaych-rao.09}.

\noindent{{\it Proof of Lemma \ref{toto}.}}
\textit{The i.i.d. model.} Fix $R$ \st for all $x\in [a-\delta/2, b+\delta/2]$ and $z\in \C$ with $|z|\ge R$, $$\lf|\ff{z-x}\ri|\le \f{\eps}{2}.$$ 

Then since the support of $\mu_X$ is contained in $[a,b]$ and for $n$ large enough, the eigenvalues of $X_n$ are all in $ [a-\delta/2, b+\delta/2]$, it suffices to prove that with overwhelming probability,  $$\sup_{|z|\le R, \, d(z, [a,b]) >\delta} |M(z)-M_n(z)|_\infty \le \eps.$$

Now, fix some 
$z$ \st $|z|\le R$, $d(z, [a,b]) >\delta,$ and $n$ large enough. By Proposition  \ref{hanson} with  $A=(z-X_n)^{-1}$, whose operator norm is
  bounded by $2\delta^{-1},$ we find 
 that for any $\e>0$, there exists $c>0$ such that
\be\label{18610.17h58}\Pro\left(\left|G_{s,t}^{n}(z)-1_{s=t}\frac{1}{n}\tr((z-X_n)^{-1})\right|\ge \frac{\delta^{-1}}{n^{1/2-\e}}\right) \le 4 e^{-cn^{2\e}}.\ee
It follows that there are $c, \eta>0$ \st for all $z$ \st $|z|\le R$, $d(z, [a,b]) >\delta,$ $$\Pro(|M(z)-M_n(z)|_\infty>\eps/2)\le e^{-cn^\eta}.$$ 
As a consequence, since the number of $z$'s \st $|z|\le R$ and $nz$ have integer real and imaginary parts has order $n^2$, 
there is a constant $C$ \st 
$$\Pro(\sup_z|M(z)-M_n(z)|_\infty   >\eps/2)\le Cn^2e^{-cn^\eta},$$ where the supremum is taken over complex numbers  $z=\f{k}{n}+i\f{l}{n}, $ with  $k,l\in \z$, \st $|z|\le R$, $d(z, [a,b]) >\delta$. 
Now, note that for $n$ large enough so that  the eigenvalues of $X_n$ are all in $ [a-\delta/2, b+\delta/2]$, the Lipschitz norm 
for $|\cdot|_\infty$ on the set of $z$'s \st $d(z, [a,b]) >\delta$ of the function $z\longmapsto M_n(z)$ is less than $\f{4}{\delta^2}.\max_{s,t=1 \ldots r} \|u_s^n\|\|u_t^n\|.$  Therefore, by Proposition \ref{hanson} again, with overwhelming probability $z\longmapsto M_n(z)$
is $\frac{4\sqrt n}{\delta^2}$-Lipschitz on this set.
The function $z\longmapsto M(z)$ is $\f{1}{\delta^2}$-Lipschitz on this set, so, with overwhelming probability, 
$$\sup_{|z|\le R, d(z, [a,b])>\delta }|M_n(z)-M(z)|_\infty\le
\max_{\substack{|z|\le R, d(z, [a,b])>\delta\\ nz\in \z+i\z} } |M_n(z)-M(z)|_\infty +8\delta^{-2} n^{-1/2}\, ,$$
which insures that for $n$ large enough, 
$$\Pro\left(\sup_{|z|\le R, d(z, [a,b])>\delta}|M(z)-M_n(z)|_\infty   >\eps\right)\le Cn^2e^{-cn^\eta}.$$This concludes the proof for the i.i.d. model.

\textit{ The orthonormalised model} can be  treated similarly, by writing $U_n=W^nG_n$
with $\sqrt{n} W^n$  a matrix converging almost surely to the identity by Proposition \ref{convm}.\hfill$\square$

\section{Fluctuations of the eigenvalues away from the bulk}
\label{sec.away}

\subsection{Statement of the results}

 Let $p_+$ be the number of $i$'s \st $\rho_{\theta_i}>b$ 
and $p_-$ be the number of $i$'s \st $\rho_{\theta_i}<a$. In this section, we study the fluctuations of the eigenvalues 
of $\wtX$ with limit out of the bulk, that is $
(\wtl_1^n,\ldots,\wtl_{p_-}^n,\wtl_{n-p_++1}^n,\ldots, \wtl_n^n)$.
We shall assume throughout this section that the spectral measure of $X_n$ converges to $\mu_X$
faster than $1/\sqrt n.$ More precisely,
\begin{hyp}\label{H2}
 For all $z\in \{\rho_{\al_1}, \ldots, \rho_{\al_q}\}$, $\sqrt{n}(G_{\mu_n}(z)-G_{\mu_X}(z))$ converges  to $0$.
\end{hyp}

Our  theorem deals with the limiting joint distribution of the   variables $\gamma^n_1, \ldots, \gamma^n_{p_-+p_+}$,  the rescaled differences between the eigenvalues with limit out of $[a,b]$ and their limits:   \[ \gamma_i^n=\begin{cases}\sqrt{n}(\wtl_i^n-\rho_{\tta_i})&\mbox{if }\;1\le i\le p_-\\ \\
\sqrt{n}(\wtl_{n-(p_-+p_+)+i}^n-\rho_{\tta_{r-(p_-+p_+)+i}})&\mbox{if }\; p_-<i\le p_-+p_+\end{cases}
\]

Let us recall that for $k\ge 1$, $\GOE(k)$ (resp. $\GUE(k)$) is the distribution of a $k\times k$ symmetric (resp. Hermitian) random matrix $[g_{i,j}]_{i,j=1}^k$ \st the random variables  
$\{\f{1}{\sqrt{2}} g_{i,i}\ste 1\le i\le k\}\cup\{  g_{i,j}\ste 1\le i<j\le k\}$
(resp. $\{  g_{i,i}\ste 1\le i\le k\}\cup\{\sqrt{2 }\Re(g_{i,j})\ste 1\le i<j\le k\}\cup\{\sqrt{2}\Im(g_{i,j})\ste 1\le i<j\le k\}$)
are independent standard Gaussian    variables.\\

The limiting behaviour of the eigenvalues with limit outside the bulk will depend on the law $\nu$ through the following quantity, called 
the {\it fourth cumulant} of $\nu$ $$\kappa_4(\nu):=\begin{cases}\int x^4\ud\nu(x)-3&\textrm{in the real case,}\\
\int |z|^4\ud\nu(z)-2&\textrm{in the complex case.}
\end{cases}$$ Note that if $\nu$ is Gaussian standard, then $\kappa_4(\nu)=0$.\\

The definitions of the $\al_j$'s and of the $k_j$'s have been given in Theorem \ref{mainint} and recalled in the Notations gathered at the end of the introduction above.
\begin{Th}\label{theogauss}
Suppose that Assumption \ref{hyponG} holds with $\kappa_4(\nu)=0,$ as well as
Hypotheses \ref{hypspec} and \ref{H2}. Then
 the law of  $$(\gamma^n_{\sum_{\ell=1}^{j-1}k_\ell+i}, 1\le i\le k_j)_{1\le j\le q}$$
 converges to the law of $(\lambda_{i,j}, 1\le i\le k_j)_{1\le j\le q},$ with $\lambda_{i,j}$
the $i$th largest eigenvalue 
of $c_{\alpha_j} M_{j}$ with $(M_1, \ldots, M_q)$ being independent
matrices, $M_j$ following the 
GUE$(k_j)$ (resp.   GOE$(k_j)$) distribution  if
$\nu$ is supported on  the complex plane (resp. the real line).
The constant $c_\alpha$ is given by
\be\label{28110.1}c_\alpha^2=\begin{cases}\f{1}{{\int (\rho_{\alpha}
-x)^{-2} \ud\mu_X(x)}}&\textrm{in the i.i.d. model,}\\ \\
\f{{\int  \f{\ud\mu_X(x)}{(\rho_{\alpha}
-x)^{2}}  -\ff{\alpha^{2}}}}{\lf(\int (\rho_{\alpha}
-x)^{-2} \ud\mu_X(x)\ri)^2}&\textrm{in the orthonormalised model.}
\end{cases}
\ee
\end{Th}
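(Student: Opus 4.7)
The plan is as follows. By Lemma~\ref{detf}, the eigenvalues of $\wtX$ outside the spectrum of $X_n$ are exactly the zeros of $z\mapsto \det M_n(z)$. Theorem~\ref{241109.17h50} localizes these zeros: for each $\al_j$, with overwhelming probability, exactly $k_j$ of the relevant $\wtl^n$'s lie in a small fixed neighborhood $V_j$ of $\rho_{\al_j}$, and these $V_j$'s can be taken pairwise disjoint and disjoint from $[a,b]$. It therefore suffices to study, separately for each $j$, the zeros of $\det M_n(z)=0$ inside $V_j$, after the change of variable $z=\rho_{\al_j}+u/\sqrt n$.

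Fix $j$ and split the rows and columns of $M_n(z)$ according to $\{1,\dots,r\}=I_j\sqcup I_j^c$, writing
\[
M_n(z)=\begin{pmatrix} A(z) & B(z) \\ C(z) & D_j(z)\end{pmatrix}
\]
with $D_j(z):=M_n(z)|_{I_j\times I_j}$. For $s\in I_j^c$, $G_{\mu_X}(\rho_{\al_j})-\theta_s^{-1}\ne 0$, so the diagonal of $A(z)$ is bounded away from zero on $V_j$; by Proposition~\ref{hanson}, its off-diagonal entries as well as the entries of $B(z),C(z)$ are $O(n^{-1/2+\e})$ with overwhelming probability. Hence $A(z)$ is invertible on $V_j$ with $A(z)^{-1}$ of bounded operator norm, and Schur's complement formula yields
\[
\det M_n(z)=\det A(z)\cdot\det\bigl(D_j(z)-C(z)A(z)^{-1}B(z)\bigr),
\]
with $C(z)A(z)^{-1}B(z)=O(n^{-1+2\e})$ uniformly on $V_j$. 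Since we are probing scales $|z-\rho_{\al_j}|\sim n^{-1/2}$, this Schur correction is negligible and the $k_j$ zeros of $\det M_n$ in $V_j$ coincide, to leading order, with those of $\det D_j(z)=0$.

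The crucial step is then the joint central limit theorem for $\sqrt n\, D_j(\rho_{\al_j})$. Since $\theta_s^{-1}=\al_j^{-1}=G_{\mu_X}(\rho_{\al_j})$ for $s\in I_j$, Taylor expansion at $z=\rho_{\al_j}+u/\sqrt n$ gives
\[
\sqrt n\,D_j(z)_{s,t} = \sqrt n\bigl(G^n_{s,t}(\rho_{\al_j}) - \one_{s=t}G_{\mu_X}(\rho_{\al_j})\bigr) - u\,\sigma_j^2\,\one_{s=t} + o_{\Pro}(1),
\]
where $\sigma_j^2:=\int(\rho_{\al_j}-x)^{-2}\ud\mu_X(x)$; the derivative term comes from $\partial_z G^n_{s,t}(\rho_{\al_j})\to -\one_{s=t}\sigma_j^2$, which follows from Hypothesis~\ref{hypspec} and the concentration of quadratic forms in Proposition~\ref{hanson}. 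Hypothesis~\ref{H2} is needed precisely to replace the actual mean $G_{\mu_n}(\rho_{\al_j})$ of $G^n_{s,s}(\rho_{\al_j})$ by its limit $G_{\mu_X}(\rho_{\al_j})$ without introducing an $O(1)$ bias at the scale $\sqrt n$. Direct variance computations from Assumption~\ref{hyponG} show that $\sqrt n\,G^n_{s,t}(\rho_{\al_j})$ ($s\ne t$ in $I_j$) is asymptotically Gaussian with variance $\sigma_j^2$ in the real case (and with the GUE covariance in the complex case), while $\sqrt n(G^n_{s,s}(\rho_{\al_j})-G_{\mu_n}(\rho_{\al_j}))$ has variance $2\sigma_j^2$ (resp.\ $\sigma_j^2$) in the real (resp.\ complex) case precisely because $\kappa_4(\nu)=0$ kills the non-Gaussian contribution. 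A Lindeberg-type argument, supported by the concentration estimates of Section~\ref{section.concentration.estimates}, upgrades these moment computations into the joint convergence
\[
\sqrt n\bigl[G^n_{s,t}(\rho_{\al_j})-\one_{s=t}G_{\mu_n}(\rho_{\al_j})\bigr]_{s,t\in I_j} \convl \sigma_j\,M_j
\]
with $M_j\sim \GOE(k_j)$ or $\GUE(k_j)$, independent across $j$ (independence is automatic since the blocks for distinct $\al_j$'s involve disjoint sets of random vectors $(u^n_s)_{s\in I_j}$).

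Combining the Schur reduction with the CLT, the rescaled equation on $V_j$ becomes, up to vanishing errors, $\det(\sigma_j M_j - u\,\sigma_j^2 I_{k_j})=0$, so the $k_j$ rescaled roots are $u=\sigma_j^{-1}\la_i(M_j)=c_{\al_j}\la_i(M_j)$, matching \eqref{28110.1} in the i.i.d.\ model. Continuity of the roots of a monic polynomial in its coefficients, applied uniformly on $V_j$ thanks to the above overwhelming-probability bounds, transfers this to the desired convergence of $\gamma^n$. The orthonormalised model is handled analogously by writing $U^n=W^n G_n$ as in the proof of Lemma~\ref{toto} and invoking Proposition~\ref{convm}: the Gram--Schmidt step modifies only the variances of the entries of $D_j$, replacing $\sigma_j^2$ by $\sigma_j^2-\al_j^{-2}$ (the reduction coming from the orthogonality constraints $\langle u^n_s,u^n_t\rangle=\one_{s=t}$), which is exactly what yields the corrected formula \eqref{28110.1} for $c_{\al_j}^2$. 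I expect the main obstacle to be the joint CLT with the correct covariance structure together with the uniform control of the Taylor remainder on $V_j$; in the orthonormalised case, additional care is needed to propagate the concentration of the Gram--Schmidt normalisation through the computation of variances, which is where Assumption~\ref{hyponG} (isotropy of $\nu$ and log-Sobolev) enters crucially.
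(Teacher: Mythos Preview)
Your proposal is correct and follows essentially the same route as the paper, with two cosmetic substitutions. First, where you isolate the $I_j\times I_j$ block via a Schur complement (bounding $CA^{-1}B=O(n^{-1+2\eps})$), the paper instead expands $\det M_n$ over permutations and shows directly that any permutation moving an index outside $I_j$ contributes $o(1)$ (their formula \eqref{detexpansion}); these are interchangeable. Second, you pass from the convergence of $\sqrt n\,D_j$ to that of its roots by continuity of polynomial roots in the coefficients, whereas the paper checks sign changes of $f_n$ at fixed test points $x_\ell(i)<y_\ell(i)$; again equivalent, though the sign-change argument sidesteps any explicit tightness bookkeeping. The joint CLT you sketch via Lindeberg is exactly what the paper packages as Theorem~\ref{3110.23h27} (quoting Bai--Yao), which also delivers the orthonormalised variance shift $\sigma_j^2\mapsto\sigma_j^2-\al_j^{-2}$ in one stroke. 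One small overstatement: in the orthonormalised model the blocks $(u^n_s)_{s\in I_j}$ are \emph{not} independent across $j$ (Gram--Schmidt couples them all), only asymptotically so via Proposition~\ref{convm}; the paper flags this explicitly.
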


When $\kappa_4(\nu)\neq 0,$ we need a bit more than Hypothesis \ref{H2}, namely
 
\begin{hyp}
 \label{H2'}
For all $z\in \R\bck [a,b]$, there is a finite number $l(z)$ \st $$\begin{cases}
\ff{n}\sum_{i=1}^n((z-X_n)^{-1})_{i,i}^2\ninf l(z)&\textrm{in the i.i.d. model,}\\
\ff{n}\sum_{i=1}^n(((z-X_n)^{-1})_{i,i}-\ff{n}\Tr((z-X_n)^{-1}))^2\ninf l(z)&\textrm{in the orthonormalised model.}\end{cases}
$$
\end{hyp}

We then have a similar result.

\begin{Th}\label{942010.1}
In the case when Assumption \ref{hyponG} holds with $\kappa_4(\nu)\neq 0,$ under Hypotheses \ref{hypspec}, \ref{H2} and \ref{H2'},
Theorem \ref{theogauss} stays true, replacing the matrices $c_{\alpha_j}M_j$ by matrices $c_{\alpha_j}M_j+D_j$ where the $D_j$'s are independent diagonal random matrices, independent of the $M_j$'s, and \st for all $j$, the diagonal entries of $D_j$ are independent centred real Gaussian   variables, with variance $-l({\rho_{\al_j}})\kappa_4(\nu)/  G_{\mu_X}'(\rho_{\al_j})$.
\end{Th}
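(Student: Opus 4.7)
The plan is to mimic the proof of Theorem~\ref{theogauss}, and to identify precisely where and how the fourth cumulant $\kappa_4(\nu)$ modifies the asymptotic variance. By Lemma~\ref{detf}, the outlying eigenvalues of $\wtX$ are the zeros of $z\mapsto\det M_n(z)$. Fixing $j\in\{1,\ldots,q\}$, I would set $z_0=\rho_{\alpha_j}$ and $B=I_{\rho_{\alpha_j}}$, so $|B|=k_j$. On the block $B\times B$, $M(z)$ vanishes at $z_0$ with linear Taylor coefficient $G_{\mu_X}'(z_0)\,I_{k_j}$, while the remaining diagonal entries of $M(z_0)$ stay bounded away from zero. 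A Schur complement reduction as in the proof of Theorem~\ref{theogauss} then localises the $k_j$ outliers associated to $\alpha_j$, up to errors of order $o(n^{-1/2})$, as the zeros of
\[
\det\!\bigl(G_{\mu_X}'(z_0)(z-z_0)\,I_{k_j}+n^{-1/2}W_n^{(j)}\bigr),
\]
where $W_n^{(j)}:=\sqrt n\bigl[G_{s,t}^n(z_0)-\mathbf 1_{s=t}G_{\mu_X}(z_0)\bigr]_{s,t\in B}$. Hypothesis~\ref{H2} is exactly what allows centring by $G_{\mu_X}$ rather than by $G_{\mu_n}$ at this scale.

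The heart of the proof is then a joint CLT for $(W_n^{(j)})_{1\le j\le q}$. Writing $A:=(z_0-X_n)^{-1}$ and $u_s^n=n^{-1/2}x^s$, each entry of $W_n^{(j)}$ is a centred quadratic form. For an off-diagonal entry (and more generally for any entry involving two distinct independent vectors $x^s,x^t$, hence every cross-block entry), the bilinear form $n^{-1/2}\sum_{i,k}A_{ik}\bar{x}_i^s x_k^t$ yields via the classical martingale CLT a centred Gaussian of variance $\tfrac1n\Tr A^2\to -G_{\mu_X}'(z_0)$; these entries do \emph{not} feel $\kappa_4$, since a term of the form $A_{ii}|x_i^s|^2$ never enters. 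A diagonal entry $W_n^{(j)}{}_{s,s}$, by contrast, contains in addition the contribution $n^{-1/2}\sum_i A_{ii}(|x_i^s|^2-1)$: the classical CLT (legitimate thanks to the sub-Gaussian tails provided by Assumption~\ref{hyponG}) combined with Hypothesis~\ref{H2'} gives for this term an independent centred Gaussian of variance $\kappa_4(\nu)\,l(z_0)$. Consequently
\[
W_n^{(j)}\;\convl\;\sqrt{-G_{\mu_X}'(z_0)}\,M_j+\tilde D_j,
\]
with $M_j\sim\GOE(k_j)$ or $\GUE(k_j)$ according to whether $\nu$ is real or complex, and $\tilde D_j$ an independent diagonal matrix with i.i.d.\ centred Gaussian diagonal entries of variance $\kappa_4(\nu)\,l(z_0)$. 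Independence across $j$ follows from the fact that distinct indices $i$ correspond to independent vectors $u_i^n$ in the i.i.d.\ model.

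Substituting $z=z_0+\gamma/\sqrt n$ into the reduced equation and passing to the limit, $\gamma$ ranges over the spectrum of $-W_n^{(j)}/G_{\mu_X}'(z_0)$, which after rescaling by $c_{\alpha_j}$ as in~\eqref{28110.1} is exactly of the form $c_{\alpha_j}M_j+D_j$ with $D_j$ the diagonal Gaussian matrix of the variance prescribed in the theorem. Joint independence of the $(M_j,D_j)$'s is inherited from that of the $(W_n^{(j)})_j$'s.

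The main obstacle I anticipate is the uniform control of the error terms in the Taylor expansion and the Schur complement reduction: one must show that $M_n|_{B\times B^c}(z)\,(M_n|_{B^c\times B^c}(z))^{-1}\,M_n|_{B^c\times B}(z)$, pointwise of order $O(n^{-1})$, remains $o(n^{-1/2})$ uniformly for $z$ in a $n^{-1/2}$-neighbourhood of $z_0$, and similarly the derivative of $M_n$ on that neighbourhood must be controlled. Both ingredients follow from Proposition~\ref{hanson} and the concentration estimates of the Appendix, exactly as in the Gaussian case. The orthonormalised model is treated in parallel via the decomposition $U_n=W^nG_n$ from Section~\ref{sec.ps}: the Gram--Schmidt correction is responsible both for the additional $-1/\alpha^2$ term appearing in the second expression for $c_\alpha^2$ in~\eqref{28110.1} and, combined with the centred version of $\sum_i A_{ii}^2$ prescribed in Hypothesis~\ref{H2'} for this model, for the same formula for the variance of $D_j$.
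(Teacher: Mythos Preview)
Your proposal is correct and follows essentially the same route as the paper: the paper's proof of Theorem~\ref{942010.1} consists of a single sentence saying that one re-runs the proof of Theorem~\ref{theogauss} verbatim, the only change being that the CLT for quadratic forms (Theorem~\ref{3110.23h27}) now produces the extra diagonal variance $\kappa_4(\nu)\,\omega_s=\kappa_4(\nu)\,l(\rho_{\alpha_j})$, which is precisely the mechanism you isolate by hand via the term $n^{-1/2}\sum_i A_{ii}(|x_i^s|^2-1)$. The two technical differences are cosmetic: where you use a Schur complement to reduce to the $B\times B$ block, the paper expands the full determinant and kills the off-block permutations directly (equation~\eqref{detexpansion}); and where you sketch a martingale CLT, the paper simply invokes its general Theorem~\ref{3110.23h27}, which already packages the $\kappa_4$ contribution and the joint independence structure.
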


\subsection{Proof of Theorems \ref{theogauss} and  \ref{942010.1}}

We prove hereafter Theorem \ref{theogauss} and we will indicate briefly at the end of this section the minor changes to make 
to get Theorem \ref{942010.1}. The main ingredient will be a central limit theorem for quadratic forms, stated in Theorem  
\ref{3110.23h27} in the appendix.\\

For $i \in \{1, \ldots q\}$ and $x \in \R,$ we denote by $M^n(i,x)$ the $r \times r$ (but no longer symmetric) matrix
with entries given by

$$
\left[M^n(i,x)\right]_{s,t} := \left\{
\begin{array}{ll}
 \sqrt n \left(G^n_{s,t}\left(\rho_{\al_i}+\f{x}{\sqrt{n}}\right) - \one_{s=t}\frac{1}{\al_i}\right), & \textrm{if } s \in I_{\rho_{\al_i}},\\
G^n_{s,t}\left(\rho_{\al_i}+\f{x}{\sqrt{n}}\right) - \one_{s=t}\frac{1}{\theta_s}, & \textrm{if } s \notin I_{\rho_{\al_i}}.
\end{array}
\right.
$$

We set $\rho_n^i(x):= \rho_{\al_i}+\f{x}{\sqrt{n}}.$

The first step of the proof will be to get the asymptotic behavior of $M^n(i,x).$

 \begin{lem}\label{5110.23h51} Let $i \in \{1, \ldots q\}$ and $x \in \R$ be fixed.
 Under the hypotheses of Theorem \ref{theogauss}, $M^n(i,x)$ converges weakly, as $n$
goes to infinity, to the matrix $\mathcal M(i,x)$ with entries
\be \label{mix}
\left[\mathcal M(i,x)\right]_{s,t} := \left\{
\begin{array}{ll}
G_{\mu_X}^\prime(\rho_{\al_i}) (x \one_{s=t} - c_{\rho_{\al_i}} n_{s,t})
, & \textrm{if } s \in I_{\rho_{\al_i}},\\
\left(\frac{1}{\al_i} - \frac{1}{\theta_s}\right) \one_{s=t}, & \textrm{if } s \notin I_{\rho_{\al_i}},
\end{array}
\right.
\ee
  with $(n_{s,t})_{s,t = 1,\ldots, r}$ a family of independent Gaussian variables with
$n_{s,s} \sim \mathcal N(0,2)$ and $n_{s,t} \sim \mathcal N(0,1)$ when $s \neq t$ in the real case
(resp. $n_{s,s} \sim \mathcal N(0,1)$ and $\Re (n_{s,t}), \Im( n_{s,t}) \sim \mathcal N(0,1/2)$ and independent in the complex case).
 \end{lem}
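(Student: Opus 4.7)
The plan is to decompose each entry of $M^n(i,x)$ into a deterministic Taylor-expansion piece plus a quadratic-form fluctuation, and then invoke the CLT for quadratic forms (Theorem \ref{3110.23h27}) together with Hypothesis \ref{H2} to identify the Gaussian limit with the stated covariance structure.

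First I would treat the ``trivial'' rows, i.e.\ the indices $s\notin I_{\rho_{\al_i}}$. For these, there is no $\sqrt n$ normalisation. Since $\rho_n^i(x)\to\rho_{\al_i}$ and $X_n$ has spectrum eventually contained in $[a-\delta,b+\delta]$, the resolvent is continuous and uniformly bounded in a neighbourhood of $\rho_{\al_i}$, so $G^n_{s,t}(\rho_n^i(x))-G^n_{s,t}(\rho_{\al_i})\to 0$ in probability. Then Theorem \ref{3110.23h27} (applied with $A=(\rho_{\al_i}-X_n)^{-1}$, whose operator norm is bounded and whose normalised trace converges to $G_{\mu_X}(\rho_{\al_i})=1/\al_i$) gives $G^n_{s,t}(\rho_{\al_i})\to \one_{s=t}/\al_i$ in probability. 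Subtracting $\one_{s=t}/\tta_s$ yields the claimed limit $(1/\al_i-1/\tta_s)\one_{s=t}$.

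Now for the non-trivial rows $s\in I_{\rho_{\al_i}}$ (where $\tta_s=\al_i$), I would split
\begin{align*}
\sqrt n\bigl(G^n_{s,t}(\rho_n^i(x))-\tfrac{\one_{s=t}}{\al_i}\bigr)
&=\sqrt n\bigl(G^n_{s,t}(\rho_{\al_i})-\one_{s=t}G_{\mu_n}(\rho_{\al_i})\bigr)\\
&\quad+\sqrt n\,\one_{s=t}\bigl(G_{\mu_n}(\rho_{\al_i})-G_{\mu_X}(\rho_{\al_i})\bigr)\\
&\quad+\sqrt n\bigl(G^n_{s,t}(\rho_n^i(x))-G^n_{s,t}(\rho_{\al_i})\bigr).
\end{align*}
The middle term vanishes by Hypothesis \ref{H2}. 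For the third term, write $(\rho_n^i(x)-X_n)^{-1}-(\rho_{\al_i}-X_n)^{-1}=-\frac{x}{\sqrt n}(\rho_{\al_i}-X_n)^{-1}(\rho_n^i(x)-X_n)^{-1}$; then the factor $\sqrt n$ cancels and the matrix $(\rho_{\al_i}-X_n)^{-1}(\rho_n^i(x)-X_n)^{-1}$ has operator norm uniformly bounded and normalised trace converging to $\int(\rho_{\al_i}-y)^{-2}\ud\mu_X(y)=-G_{\mu_X}'(\rho_{\al_i})$, so this term tends in probability to $x\,G_{\mu_X}'(\rho_{\al_i})\one_{s=t}$, exactly the deterministic piece in \eqref{mix}.

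The core of the argument is the first term: Theorem \ref{3110.23h27} applied jointly to $A_n=(\rho_{\al_i}-X_n)^{-1}$ and all pairs of independent i.i.d. vectors $(u_s^n,u_t^n)$ yields joint Gaussian convergence of $\bigl\{\sqrt n(G^n_{s,t}(\rho_{\al_i})-\one_{s=t}\tfrac1n\tr A_n)\bigr\}_{s,t}$ with independent entries (across distinct pairs), the variance being governed by $\tfrac1n\tr(A_n^2)\to-G_{\mu_X}'(\rho_{\al_i})$ for off-diagonal entries, with the extra factor $2$ on the diagonal in the real case. Since $\kappa_4(\nu)=0$, the bias terms involving $\sum_i A_{ii}^2$ drop out, leaving exactly the variance $G_{\mu_X}'(\rho_{\al_i})^2 c_{\rho_{\al_i}}^2$ promised by \eqref{28110.1} (recall $c_\al^2=-1/G_{\mu_X}'(\rho_\al)$ in the i.i.d. case). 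The orthonormalised case is handled by writing the orthonormalised vectors as $W^n\cdot v_j^n$ with $\sqrt n W^n\to \mathrm{Id}$ (as in the proof of Lemma \ref{toto}), which introduces the correction $-1/\al^2$ in $c_\al^2$ via the Gram-Schmidt projection, and by using the conditional version of Theorem \ref{3110.23h27}. The main technical obstacle, which I expect to be the most delicate point, is the simultaneous joint convergence in distribution together with the rigorous control of the $\sqrt n$-scale remainder in the Taylor expansion uniformly over the finitely many entries, which should follow from the Hanson--Wright type concentration recalled in the Appendix.
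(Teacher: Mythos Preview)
Your proposal is correct and follows essentially the same route as the paper: for $s\notin I_{\rho_{\al_i}}$ you use concentration of the quadratic form to get the deterministic limit, and for $s\in I_{\rho_{\al_i}}$ you split into a CLT piece (Theorem~\ref{3110.23h27}), a Hypothesis~\ref{H2} piece, and a Taylor-expansion piece. The only cosmetic difference is that the paper centres the quadratic-form fluctuation at the shifted point $\rho_n^i(x)$ and puts the Taylor expansion on the \emph{deterministic} trace $\tfrac1n\tr((\cdot-X_n)^{-1})$, whereas you centre at $\rho_{\al_i}$ and put the resolvent difference on the \emph{random} bilinear form; both are equivalent once one notes that $\langle u_s^n,(\rho_{\al_i}-X_n)^{-1}(\rho_n^i(x)-X_n)^{-1}u_t^n\rangle$ concentrates around its trace by Proposition~\ref{hanson}.
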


\begin{proof}
 From \eqref{18610.17h58}, we know that for $s\notin I_{\rho_{\al_i}}$,
\be \label{28.12.10.1}
\lim_{n\ra\infty} \left[M^n(i,x)\right]_{s,t} =\left( G_{\mu_X}(\rho_{\al_i}) - \frac{1}{\theta_s}\right)\one_{s=t} = \left(\frac{1}{\al_i} - \frac{1}{\theta_s}\right) \one_{s=t}.
\ee

Let $s \in  I_{\rho_{\al_i}}.$ We write the decomposition 
$$M_{s,t}^n(i,x):=
\sqrt{n}\left(G^n_{s,t}(\rho_n^{i}(x))-\ff{\al_i }\one_{s=t}\right)=:
M_{s,t}^{n,1}(i,x)+
M_{s,t}^{n,2}(i,x)+M_{s,t}^{n,3}(i,x)
$$
where
\begin{eqnarray*}
M_{s,t}^{n,1}(i,x)&:=&\sqrt{n}\left( \langle u^n_s, (\rho_n^{i}(x)-X_n)^{-1} u^n_t\rangle
-\one_{s=t}\frac{1}{n}\tr((\rho_n^{i}(x)-X_n)^{-1})\right),\\
M_{s,t}^{n,2}(i,x)&:=&\one_{s=t}\sqrt{n}\left(\frac{1}{n}\tr((\rho_n^{i}(x)-X_n)^{-1})-\frac{1}{n}\tr((\rho_{\alpha_i}-X_n)^{-1})\right)
,\\
M_{s,t}^{n,3}(i,x)&:=&\one_{s=t}\sqrt{n} \left(\frac{1}{n}\tr((\rho_{\alpha_i}-X_n)^{-1})-G_{\mu_X}(
\rho_{\alpha_i})
\right).
\end{eqnarray*}

The asymptotics of the first term is given by  Theorem \ref{3110.23h27} with a variance given by
\be\label{lik0}
\lim_{n\ra\infty}\frac{1}{n}\tr((\rho_n^{i}(x)-X_n)^{-2})=
-G'_{\mu_X}(\rho_{\alpha_i}). \ee
As $\rho_{\alpha_i}$
is at distance of order one from the support of $X_n$,
we can expand $x/\sqrt{n}$ in $M_{s,t}^{n,2}(i,x)$ to deduce that
\be\label{lik}
\lim_{n\ra\infty}  M_{s,t}^{n,2}(i,x)=xG_{\mu_X}'(\rho_{\alpha_i})\one_{s=t}.\ee
Finally, by Hypothesis \ref{H2}, we have
\be\label{lik2}
\lim_{n\ra\infty} M_{s,t}^{n,3}(i,x)=0.\ee
Equations \eqref{28.12.10.1}, \eqref{lik0}, \eqref{lik} and \eqref{lik2} prove the lemma
(using the fact that the distribution of the Gaussian variables $n_{s,s}$ and $n_{s,t}$ are symmetric). 
\end{proof}

The next step is to study the behaviour of $(M^n(i,x))_{x\in \R}$ as a process on $\R.$
We will show in particular that the dependence in the parameter $x$ is very simple.
Let $(n_{s,t})_{s,t=1, \ldots, r}$ be a family of Gaussian random variables as in Lemma \ref{5110.23h51}
and define the random process $\mathcal M(i, \cdot)$ from $\R$ to $\mathcal M_n(\C)$ with
$\left[\mathcal M(i,x)\right]_{s,t} $ defined as in \eqref{mix} (where we emphasize that 
$(n_{s,t})_{s,t=1, \ldots, r}$ do not depend on $x).$ Then we have
\begin{lem}
 Let $i \in \{1, \ldots q\}$  be fixed. The random process  $ (M^n(i,x))_{x \in \R}$
converges weakly, as $n \ra \infty,$ to $\mathcal M(i, \cdot)$ in the sense of finite dimensional marginals.
\end{lem}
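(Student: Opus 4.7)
\begin{pr}
The plan is to reduce this finite-dimensional convergence to the one-point convergence of Lemma~\ref{5110.23h51}, by exploiting the fact that the $x$-dependence of $\mathcal M(i,x)$ is purely deterministic.  Fix $x_1,\ldots,x_p\in\R$ and decompose each $M^n(i,x_j)$ as $M^{n,1}(i,x_j)+M^{n,2}(i,x_j)+M^{n,3}(i,x_j)$ on rows $s\in I_{\rho_{\al_i}}$, exactly as in the proof of Lemma~\ref{5110.23h51}; on rows $s\notin I_{\rho_{\al_i}}$ no $\sqrt n$ rescaling occurs and the argument is simpler.  The pieces $M^{n,2}$ and $M^{n,3}$ are deterministic, with $M^{n,2}_{s,t}(i,x_j)\to x_j G'_{\mu_X}(\rho_{\al_i})\one_{s=t}$ and $M^{n,3}(i,x_j)\to 0$ by Hypothesis~\ref{H2}, so the joint limit of $(M^n(i,x_1),\ldots,M^n(i,x_p))$ is determined by that of the random parts $(M^{n,1}(i,x_1),\ldots,M^{n,1}(i,x_p))$.

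The key step is to show that for any $x,y\in\R$ and any $s,t$, $M^{n,1}_{s,t}(i,x)-M^{n,1}_{s,t}(i,y)\convP 0$.  The resolvent identity
$$(\rho_n^i(x)-X_n)^{-1}-(\rho_n^i(y)-X_n)^{-1}=-\frac{x-y}{\sqrt n}(\rho_n^i(x)-X_n)^{-1}(\rho_n^i(y)-X_n)^{-1}$$
lets us rewrite this difference as
$$-(x-y)\Bigl(\lan u^n_s,A^{x,y}_n u^n_t\ran-\one_{s=t}\tfrac{1}{n}\tr(A^{x,y}_n)\Bigr),\qquad A^{x,y}_n:=(\rho_n^i(x)-X_n)^{-1}(\rho_n^i(y)-X_n)^{-1}.$$
Since $\rho_{\al_i}\notin[a,b]$, the operator norm of $A^{x,y}_n$ stays uniformly bounded for $n$ large, so Proposition~\ref{hanson} (used as in the derivation of~\eqref{18610.17h58}) shows that this centred quadratic form is $O(n^{-1/2+\e})$ with overwhelming probability, yielding the desired vanishing.

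It follows that $M^{n,1}(i,x_j)-M^{n,1}(i,x_1)\convP 0$ for every $j$, hence the $p$-tuple $(M^{n,1}(i,x_1),\ldots,M^{n,1}(i,x_p))$ converges jointly to $(Y,\ldots,Y)$, where $Y$ is the Gaussian matrix limit of $M^{n,1}(i,x_1)$ given by Lemma~\ref{5110.23h51} (itself based on Theorem~\ref{3110.23h27}).  Adding back the deterministic limits of $M^{n,2}$ and $M^{n,3}$, we obtain the announced joint convergence to $(\mathcal M(i,x_1),\ldots,\mathcal M(i,x_p))$, in which the Gaussian family $(n_{s,t})$ is shared across all $\mathcal M(i,x_j)$ while only the diagonal shift $xG'_{\mu_X}(\rho_{\al_i})\one_{s=t}$ carries the $x$-dependence.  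The main technical delicacy is to ensure the uniform norm bound on $A^{x,y}_n$ for $x,y$ in a compact set and the applicability of the concentration inequality; the orthonormalised model is then handled exactly as in Lemma~\ref{toto} via the decomposition $U_n=W^nG_n$.
\end{pr}
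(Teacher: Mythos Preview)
Your proof is correct and follows essentially the same strategy as the paper's: both establish that the random part of $M^n(i,x)$ loses its $x$-dependence in the limit because the resolvent $(\rho_n^i(x)-X_n)^{-1}$ varies only by $O(n^{-1/2})$ as $x$ changes. The paper packages this as a direct application of Remark~\ref{3110.23h27-rmq}, merely checking that
\[
\frac{1}{n}\Tr\Bigl((\rho_n^i(x)-X_n)^{-1}-(\rho_n^i(x')-X_n)^{-1}\Bigr)^2\ninf 0,
\]
which via the resolvent identity is $O(n^{-1})$; the underlying ``second moment computation'' alluded to in that remark is exactly the mechanism you write out explicitly. The only difference is cosmetic: you invoke the concentration inequality of Proposition~\ref{hanson} to get $M^{n,1}(i,x)-M^{n,1}(i,y)\convP 0$, whereas the paper's remark implicitly uses an $L^2$ bound to the same effect.
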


\begin{proof}
 This is a direct application of Remark \ref{3110.23h27-rmq}, as it is easy to check that for any $x, x^\prime \in \R,$
$$ \lim_{n \ra \infty}\frac{1}{n} \Tr \left(\left(\rho_{\al_i}+\f{x}{\sqrt{n}} -X_n\right)^{-1} -
\left(\rho_{\al_i}+\f{x^\prime}{\sqrt{n}} -X_n\right)^{-1} \right)^2 =0$$
\end{proof}

The last point to check is a result of asymptotic independence, from which the independence of the matrices 
$M_1, \ldots, M_q$ will be inherited. In fact, the matrices \linebreak $(M^n(1, x_1), \ldots,M^n(q, x_q)) $ won't be asymptotically independent
but their determinants will.
\begin{lem}
 For any $(x_1, \ldots, x_q)\in \R^q,$ the random variables
$$\mathrm{det} [M^n(1, x_1)], \ldots,\mathrm{det} [M^n(q, x_q)]$$ are asymptotically independent.
\end{lem}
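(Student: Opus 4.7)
The plan is to reduce each $\det M^n(i,x_i)$, up to terms vanishing in probability, to a product of a non-vanishing deterministic factor and the determinant of a random submatrix whose entries depend only on the vectors $(u^n_s)_{s\in I_{\rho_{\alpha_i}}}$. Since the sets $I_{\rho_{\alpha_1}},\ldots,I_{\rho_{\alpha_q}}$ are pairwise disjoint, these submatrices will involve disjoint families of random vectors, from which the claimed asymptotic independence will follow.

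First I would fix $i$, set $A_i:=I_{\rho_{\alpha_i}}$ and $B_i:=\{1,\ldots,r\}\setminus A_i$, and partition $M^n(i,x_i)$ as a $2\times 2$ block matrix with respect to $(A_i,B_i)$. Combining the definition of $M^n(i,x_i)$ with \eqref{18610.17h58} and Lemma \ref{5110.23h51}, one verifies that the $B_i\times B_i$ block converges in probability to the invertible diagonal matrix $\mathrm{diag}\bigl((\tfrac{1}{\alpha_i}-\tfrac{1}{\theta_s})_{s\in B_i}\bigr)$, that the $A_i\times A_i$ and $A_i\times B_i$ blocks are tight of order one, and that the $B_i\times A_i$ block tends to zero in probability because its entries are unnormalised off-diagonal Green functions. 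A Schur complement expansion then yields
\bes
\det M^n(i,x_i)=\Bigl(\prod_{s\in B_i}\bigl(\tfrac{1}{\alpha_i}-\tfrac{1}{\theta_s}\bigr)\Bigr)\,\det\bigl[M^n(i,x_i)|_{A_i\times A_i}\bigr]+o_{\Pro}(1),
\ees
with non-zero prefactor, so that it suffices to establish the asymptotic independence of the $q$ random matrices $N^n_i:=M^n(i,x_i)|_{A_i\times A_i}$.

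In the i.i.d.\ model, the independence is essentially built in: each $N^n_i$ is a measurable function of the vectors $(u^n_s)_{s\in A_i}$ alone (the resolvents $(\rho^i_n(x_i)-X_n)^{-1}$ being deterministic), so the pairwise disjointness of the $A_i$'s combined with the independence of the $u^n_s$'s produces \emph{exact} independence of $N^n_1,\ldots,N^n_q$ at every $n$, and a fortiori asymptotic independence of their determinants.

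The main obstacle will be the orthonormalised model, in which the vectors $(u^n_s)_{1\le s\le r}$ are no longer strictly independent. To handle it I would use Proposition \ref{convm} to write the orthonormalised family as $W^n$ applied to $r$ independent copies of $v^n$, with $\sqrt{n}W^n$ converging almost surely to the identity, and then invoke the concentration bounds of Section \ref{section.concentration.estimates} to control the off-diagonal correction $W^n-n^{-1/2}I$. Plugging this decomposition into $N^n_i$ produces the i.i.d.-model contribution, supported on the family $(u^n_s)_{s\in A_i}$, plus cross-terms involving the other families which should vanish in probability by these bounds. The joint CLT of Theorem \ref{3110.23h27} would then ensure that the limiting joint law of $(N^n_1,\ldots,N^n_q)$ has block-diagonal covariance, preserving asymptotic independence as in the i.i.d.\ case.
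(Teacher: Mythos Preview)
Your proof is correct and follows essentially the same strategy as the paper: reduce $\det M^n(i,x_i)$ to the product of the deterministic factor $\prod_{s\notin I_{\rho_{\alpha_i}}}(\tfrac{1}{\alpha_i}-\tfrac{1}{\theta_s})$ and the $I_{\rho_{\alpha_i}}\times I_{\rho_{\alpha_i}}$ subdeterminant, then exploit the disjointness of the index sets $I_{\rho_{\alpha_i}}$. The only difference is that you obtain this reduction via the Schur complement formula (using that the $B_i\times A_i$ block is $o_{\Pro}(1)$ while the other blocks are tight), whereas the paper expands the determinant by the Leibniz formula and shows directly that any permutation not fixing every index outside $I_{\rho_{\alpha_i}}$ contributes $o(1)$, using the rate estimate $n^\kappa(G^n_{s,t}-\one_{s=t}\theta_s^{-1})\to 0$ for $\kappa<1/2$. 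Both arguments yield the same key identity \eqref{detexpansion}; your Schur complement route is arguably a bit more structural, while the paper's permutation count is slightly more explicit about the order of the remainder.
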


\begin{proof}
 The key point is to show that,
\be \label{detexpansion}
 \mathrm{det}\left[ M^n(i, x)\right] = \mathrm{det} \left([M^n(i, x)]_{s,t \in I_{\rho_{\alpha_i}}}\right) \prod_{s\notin
I_{\rho_{\alpha_i}}} \left(\frac{1}{\alpha_i} - \frac{1}{\theta_s}\right)+ o(1),
\ee
where the remaining term is uniformly small  as $x$ varies in any compact of $\R.$\\

Then, as the set of indices $I_{\rho_{\alpha_1}}, \ldots, I_{\rho_{\alpha_q}}$ are disjoint, the submatrices 
involved in the main terms are independent in the i.i.d case and asymptotically independent in the orthonormalised case.\\

Let us now show \eqref{detexpansion}. Firstly, note that by the  convergence of $M^n_{s,t}(i,x)$ obtained in the proof of the 
Lemma \ref{5110.23h51}, we have
for all $s,t\in\{1, \ldots,r\}$ \st $s\neq t$ or $s\in I_{\rho_{\alpha_i}}$, for all $\kappa<1/2$, 
\be\label{241209.16h30}n^\kappa\lf(G^n_{s ,t}(\rho_n^i(x))-\one_{s=t}\ff{\theta_s}\ri)\ninf 0 \quad\textrm{(convergence in probability).}\ee
 
By  the formula \bes\label{23110915h092}  \mathrm{det}\left[ M^n(i, x)\right] = n^{\frac{k_i}{2}} 
\sum_{\sigma\in S_r}\operatorname{sgn}(\sigma)\prod_{s=1}^r
\lf(G^n_{s ,\sigma(s)}(\rho_n^i(x))-\one_{s=\sigma(s)}\ff{\theta_s}\ri),\ees
it suffices to prove that for any 
$\sigma\in S_r$ \st 
for some $i_0\in \{1, \ldots,r\}\bck I_{\rho_{\alpha_i}}$, $\sigma(i_0)\neq i_0$,  
\be\label{23110915h093}n^{\f{k_i}{2}}\prod_{s=1}^r\lf(G_{s ,\sigma(s)}^n(\rho_n^i(x))
-\one_{s=\sigma(s)}\ff{\theta_s}\ri)\ninf 0 \quad\textrm{(convergence in probability).}\ee
It   follows immediately  from \eqref{241209.16h30}  since
 for any $\kappa<1/2$,  in the above product, all the terms with index in $I_{\rho_{\alpha_i}}$ are of order at most $n^{-\kappa}$, giving
a contribution $n^{-k_i \kappa}$, and $i_0$ is not in $I_{\rho_{\alpha_i}}$ and satisfies $\sigma(i_0)\neq i_0$, yielding another term of order
 at most
$n^{-\kappa}$. Hence, the other terms being bounded because $\rho_n^i(x)$ stays bounded away from $[a,b]$,  the above product is at most of order $n^{-\kappa(k_i +1)}$ and so taking $\kappa\in (\f{k_i}{2( k_i+1)},\f{1}{2})$
proves \eqref{23110915h093}.
\end{proof}
Now as we have that, for $i \in \{1, \ldots, q\}$ and $x \in \R,$
$$  \mathrm{det}\left[ M^n(i, x)\right] = f_n\left(\rho_{\al_i}+\f{x}{\sqrt{n}}\right)n^{\frac{k_i}{2}},$$
we can deduce from the lemmata above the following
\begin{propo}
 Under the hypothesis of Theorem \ref{theogauss}, the random process
$$ \left(\left(n^{\frac{k_1}{2}}f_n\left(\rho_{\al_1}+\f{x}{\sqrt{n}}\right)\right)_{x \in \R}, \ldots,
\left(n^{\frac{k_q}{2}}f_n\left(\rho_{\al_q}+\f{x}{\sqrt{n}}\right)\right)_{x \in \R} \right)$$
converges weakly, as $n$ goes to infinity to the random process
$$ \left(\left( G^\prime_{\mu_X}(\rho_{\alpha_i})^{k_i} {\rm det}(xI - c_{\al_i}M_i) \prod_{s\notin
I_{\rho_{\alpha_i}}} \left(\frac{1}{\alpha_i} - \frac{1}{\theta_s}\right)\right)_{x\in \R}\right)_{1 \le i \le q}$$
in the sense of finite dimensional marginals, with the constants $c_{\al_i}$ and the joint distribution
of $(M_1, \ldots, M_q)$ as in the statement of Theorem  \ref{theogauss}.
\end{propo}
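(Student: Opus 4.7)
The plan is to combine the three preceding lemmata through the identity
\[
n^{k_i/2}\, f_n\!\lf(\rho_{\al_i}+\f{x}{\sqrt{n}}\ri)=\det [M^n(i,x)],
\]
which follows from \eqref{eqint} by factoring $\sqrt{n}$ out of each of the $k_i$ rows of $M^n(i,x)$ indexed by $s\in I_{\rho_{\al_i}}$; for such $s$ one has $\theta_s=\al_i$, so the diagonal shift $\one_{s=t}/\theta_s$ coincides with $\one_{s=t}/\al_i$ and is consistent with the definition of $M^n(i,x)$.

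First I would fix $i\in\{1,\ldots,q\}$ and establish finite-dimensional marginal convergence of the scalar process $(\det [M^n(i,x)])_{x\in\R}$. The process convergence lemma gives that $(M^n(i,x))_{x\in\R}$ converges in the sense of finite-dimensional marginals to $(\mathcal{M}(i,x))_{x\in\R}$, whose $x$-dependence is carried entirely by an additive $xI_{k_i}$ shift on the diagonal block indexed by $I_{\rho_{\al_i}}$; the remaining entries converge to $x$-independent limits, vanishing off the diagonal and equal to $1/\al_i-1/\theta_s\neq 0$ on it. Applying the expansion \eqref{detexpansion}, whose error is uniform in $x$ on compact sets, together with continuity of the determinant, yields finite-dimensional marginal convergence of $(\det [M^n(i,x)])_{x\in\R}$ to
\[
\lf(G'_{\mu_X}(\rho_{\al_i})^{k_i}\det(xI-c_{\al_i}M_i)\prod_{s\notin I_{\rho_{\al_i}}}\lf(\f{1}{\al_i}-\f{1}{\theta_s}\ri)\ri)_{x\in\R},
\]
with $M_i$ the GOE$(k_i)$ (resp.\ GUE$(k_i)$) matrix built from the limiting Gaussians $n_{s,t}$, $s,t\in I_{\rho_{\al_i}}$.

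For joint convergence across $i=1,\ldots,q$, I would exploit the disjointness of the index sets $I_{\rho_{\al_1}},\ldots,I_{\rho_{\al_q}}$. The $k_i\times k_i$ blocks $[M^n(i,x)]_{s,t\in I_{\rho_{\al_i}}}$ involve pairwise disjoint families of random vectors $u^n_s$ as $i$ varies, hence are genuinely independent in the i.i.d.\ model for every $x$, and asymptotically independent in the orthonormalised model by the argument of the third lemma (via the decomposition $U_n=W^nG_n$ with $\sqrt{n}\,W^n\to I$ almost surely, cf.\ Proposition \ref{convm}). The expansion \eqref{detexpansion} then reduces the joint asymptotics of the $q$ determinant processes to that of these block processes, and combined with the identity recalled at the outset gives the announced convergence.

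The only slightly delicate point I anticipate is extending the asymptotic independence of the third lemma, stated there for a single evaluation point per index, to finitely many evaluation points per index simultaneously. In the i.i.d.\ model this is automatic from the disjointness of the index sets, since the limiting randomness is built entirely from the blocks. In the orthonormalised model, one reruns the same concentration/truncation argument simultaneously over the finite collection of evaluation points, which introduces no new idea but requires a uniform-in-$x$ version of the estimates already used.
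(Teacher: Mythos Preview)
Your proposal is correct and follows the same route the paper takes: the proposition is obtained by combining the identity $n^{k_i/2}f_n(\rho_{\al_i}+x/\sqrt n)=\det[M^n(i,x)]$ with the three lemmata---the process lemma for each fixed $i$, the determinant expansion \eqref{detexpansion}, and the block-independence coming from the disjointness of the $I_{\rho_{\al_i}}$. Your remark that the asymptotic-independence lemma must be upgraded from a single evaluation point per index to finitely many is a fair observation; the paper leaves this implicit, and as you note it is immediate in the i.i.d.\ model and requires only rerunning the same estimates uniformly over a finite set of $x$'s in the orthonormalised model.
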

 From there, the proof of Theorem \ref{theogauss} is straightforward. 
\begin{proof}
 Let $$x_1(i)< y_1(i)<x_2(i)<y_2(i)<\cdots <y_{k_i}
(i)\, \qquad\textrm{($1\le i\le q$)},$$ be fixed.   Since, by Theorem \ref{241109.17h50}, for all $\eps>0$, for $n$ large enough, 
$f_n$ vanishes
exactly at $p_-+p_+$ points in $\R\bck[a-\eps, b+\eps]$,
 we have that
 \begin{multline*}
  \Pro\left[x_\ell(i)<\gamma^n_{\sum_{m=1}^{i-1} k_m+ \ell}< y_\ell(i), \qquad
\forall \ell=1, \ldots, k_i, \quad \forall i=1,\ldots q\right] \\
= \Pro \left[f_n\left(\rho_{\al_i}+\f{y_\ell(i)}{\sqrt{n}}\right)f_n\left(\rho_{\al_i}+\f{x_\ell(i)}{\sqrt{n}}\right)<0, \,\forall \ell = 1, \ldots, k_i,
\,\forall i=1, \ldots, q, \right]\\
\xrightarrow[n \ra \infty]{}  \Pro \left[\mathrm{det}\left(y_\ell(i)I-c_{\alpha_i}M_i\right)\mathrm{det}\left(x_\ell(i)I-c_{\alpha_i}M_i\right)
<0, \,\forall \ell = 1, \ldots, k_i,
\,\forall i=1, \ldots, q, \right]\\
= \Pro \left[x_\ell(i)<\lambda_{i,\ell}< y_\ell(i)
, \,\forall \ell = 1, \ldots, k_i,
\,\forall i=1, \ldots, q, \right]
 \end{multline*}
\end{proof}

To prove Theorem \ref{942010.1}, the only substantial change to make is in the definition \eqref{mix},
in the case when $s \in I_{\rho_{\alpha_i}},$ we have to put 
$$\left[\mathcal M(i,x)\right]_{s,t} := 
G_{\mu_X}^\prime(\rho_{\al_i}) (x \one_{s=t} - c_{\rho_{\al_i}} n_{s,t}) - \kappa_4(\nu)l(\rho_{\al_i}). $$
The convergence of $\left[ M^n(i,x)\right]_{s,t}$ to $\left[\mathcal M(i,x)\right]_{s,t}$
  is again obtained by applying Theorem \ref{3110.23h27}.

\section{The sticking eigenvalues}
\label{sec.sticking}
\subsection{Statement of the results}
To study the fluctuations of the eigenvalues
which stick to the bulk, we need a more precise 
information on the eigenvalues of $X_n$
in the vicinity of their extremes. 
 More explicitly,
we shall need the following 
additional hypothesis, which depends 
on  a  positive integer  $p$ and a real number $\alpha\in (0,1)$.
Note that this hypothesis has two versions: Hypothesis \ref{H3a}$[p, \alpha,a]$ is  adapted to the study of the smallest eigenvalues (it is the version detailed below) and Hypothesis \ref{H3a}$[p, \alpha,b]$ is adapted to the study of the largest eigenvalues (this version is only outlined below).

\begin{hyp}
 \label{H3a}{$[p, \alpha,a]$}
There exists a sequence $m_n$ of positive integers 
tending to infinity   \st  $m_n=O(n^\alpha)$, \be\label{16511}\liminf_{n\to\infty} \ff{n}\sum_{i=m_n+1}^n\ff{\la_p^n-\la_i^n}\ge \ff{\udl{\tta}},\ee and there exist 
 $\eta_2>0$ and $\eta_4 >0$, 
so that   for $n$ large enough 
\be
\label{ecarts_vp_190110}  
 \sum_{i=m_n+1}^{n} \ff{(\la_p^{n}-\la_{i}^n)^{2}} \le
n^{2-\eta_2},
\ee
\be
  \textrm{ and } \quad\sum_{i=m_n+1}^n \frac{1}{(\la_p^{n}-\la_{i}^n)^4}\le n^{4-\eta_4}\,.\label{norankone}
\ee
\end{hyp}

\noindent{\bf Hypothesis  \ref{H3a}. }{\it $[p, \alpha,b]$ is the same hypothesis where we
replace $\la_p^{n}-\la_{i}^n$  by $\lambda_{n-p+1}^n-\lambda_{n-i+1}^n$,  
and  \eqref{16511} becomes
$$  \limsup_{n\to\infty}\ff{n}\sum_{i=m_n+1}^n\ff{\la_{n-p+1}^{n}-\la_{n-i+1}^n}\le \ff{\ovl{\tta}}\,.$$}

For many matrix models, the behaviors of largest and smallest eigenvalues are similar, and Hypothesis  \ref{H3a}  $[p, \alpha,a]$ is satisfied \ssi 
Hypothesis  \ref{H3a} $[p, \alpha,b]$ is satisfied. In such cases, we shall simply say that {\bf Hypothesis  \ref{H3a}} $[p, \alpha]$ is satisfied.

For rank one perturbations and in the i.i.d. model, we will only require the two first conditions \eqref{16511} and \eqref{ecarts_vp_190110}
whereas for higher rank perturbations, we will need in addition \eqref{norankone} to control the off-diagonal terms
of the determinant.\\

Moreover,
we shall not study the critical case where for some $i$, $\theta_i\in\{\udl{\tta},\ovl{\tta}\}$.

\begin{assum}
 \label{hypthetamin}
For all $i$, $\tta_i\ne \udl{\tta}$ and  $\tta_i\ne 
\ovl{\tta}$.
\end{assum}

In fact, Assumption \ref{hypthetamin} can be weakened into: for all $i$, $\tta_i\ne \udl{\tta}$ (resp. $\tta_i\ne 
\ovl{\tta}$) if we only study the smallest (resp. largest) eigenvalues.

The fact that the eigenvalues of the non-perturbed 
matrix are sufficiently spread at the edges to insure the above hypothesis
allow the eigenvalues of the perturbed matrix to
be very close to them, as stated in the following theorem.
 
\begin{Th}\label{theostick}
Let $I_a=\{i\in[1, r]: \rho_{\theta_i}=a\}=[p_-+1,r_0]$ (resp. $I_{b}=
\{i\in[1, r]: \rho_{\theta_i}=b\}=[r_0+1,r-p_+]$)  be the set of indices
corresponding to the  eigenvalues $\wtl_i^n$ (resp. $\wtl_{n-r+i}^n$)
converging to the lower (resp. upper) bound
of the support of $\mu_X$. Let us suppose 
 Hypothesis \ref{hypspec},  Hypothesis \ref{H3a} $[r, \alpha,a]$ (resp. Hypothesis \ref{H3a} $[r, \alpha,b]$) and  Assumptions \ref{hyponG} and \ref{hypthetamin} to hold. Then  for any 
$\alpha^\prime > \alpha,$  we have, for all $i\in I_a$ (resp. $i\in I_b$), 
\beq&& \qquad\quad
 \min_{1\le k\le i+ r-r_0}
|\wtl_{i}^n-\la_{k}^n|\le n^{-1+\alpha^\prime},\\
&&\textrm{(resp. } \min_{n-r+i-r_0\le k\le n}
|\wtl_{n-r+i}^n-\la_{k}^n|\le n^{-1+\alpha^\prime}\textrm{)}\eeq
with overwhelming probability.
\end{Th}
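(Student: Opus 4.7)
The plan is to exploit the characterization from Lemma \ref{detf}: the eigenvalues of $\wtX$ outside the spectrum of $X_n$ are exactly the zeros of $\det M_n(z)$. Set $\eps_n := n^{-1+\alpha'}$ and define the forbidden region
$F_n := \{z\le a+\delta : d(z,\{\la_k^n : k\le i+r-r_0\})>\eps_n\}$
for some small fixed $\delta>0$. The goal is to show that with overwhelming probability $\det M_n(z)\neq 0$ for all $z\in F_n$. Combined with $\wtl_i^n\convas a$ from Theorem \ref{241109.17h50cor11} and with Weyl's interlacing for rank-$r$ Hermitian perturbations (which yields $\wtl_i^n \le \la_{i+r-r_0}^n$, and $i+r-r_0\le r$ since $i\in I_a$ gives $i\le r_0$), this would force $\wtl_i^n$ to lie within $\eps_n$ of some $\la_k^n$ with $k\le i+r-r_0$.

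Using the spectral decomposition $X_n = V\Lambda V^*$, I write $G^n_{s,t}(z)=\sum_{j=1}^n(z-\la_j^n)^{-1}\ovl{\alpha_{s,j}}\alpha_{t,j}$ with $\alpha_{s,j}:=(V^* u_s^n)_j$ of typical size $n^{-1/2}$, and split the sum at $j=m_n$ (from Hypothesis \ref{H3a}) into a close part ($j\le m_n$) and a far part ($j>m_n$). For the far part, Proposition \ref{hanson} combined with the Hilbert-Schmidt bound \eqref{ecarts_vp_190110} yields, with overwhelming probability,
\[G^{n,\mathrm{far}}_{s,t}(z)=\one_{s=t}\cdot\tau_n(z)+o(1),\qquad \tau_n(z):=\tfrac1n\sum_{j>m_n}(z-\la_j^n)^{-1},\]
and then uniformly in $z\in F_n$ via a net argument plus the Lipschitz control of $G^{n,\mathrm{far}}_{s,t}$ on $F_n$. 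The bound \eqref{16511} together with the convergence $G_{\mu_n}\to G_{\mu_X}$ implies $\tau_n(z)\to 1/\udl{\tta}$ for $z\to a$, and under Assumption \ref{hypthetamin} we then have $|\tau_n(z)-\tta_s^{-1}|\ge c_0>0$ for every $s$ and $z$ near $a$.

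For the close part, on $F_n$ the indices $j\le i+r-r_0\le r$ satisfy $|z-\la_j^n|\ge\eps_n$, so their contribution to each entry is bounded by $O(r\eps_n^{-1}n^{-1+2\eps})=O(n^{-\alpha'+2\eps})$ using the concentration $|\alpha_{s,j}|=O(n^{-1/2+\eps})$ with overwhelming probability. For the remaining range $r<j\le m_n$, where $z$ may approach some $\la_j^n$, the off-diagonal bilinear forms have vanishing mean and second moment controlled via \eqref{ecarts_vp_190110}, while the fourth-moment input \eqref{norankone} produces the concentration on a fine enough scale. The diagonal contributions from this range, combined with $\tau_n(z)$, reconstruct $\frac1n\Tr(z-X_n)^{-1}\to G_{\mu_X}(z)$ up to negligible errors. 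Altogether,
\[M_n(z) = \diag(\tau_n(z)-\tta_s^{-1}) + E_n(z)\]
with $\max_{s,t}|E_n(z)_{s,t}|=o(1)$ uniformly on $F_n$ with overwhelming probability, whence a Neumann series argument shows that $M_n(z)$ is invertible and $\det M_n(z)\neq 0$ there.

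The main obstacle lies in upgrading pointwise concentration to a uniform estimate on $F_n$ that survives the worst case, where $z$ is close to some intermediate eigenvalue $\la_j^n$ with $r<j\le m_n$: ordinary second-moment Hanson-Wright by itself does not rule out a pathological cancellation in the determinant expansion, which is where the fourth-moment control \eqref{norankone} becomes essential. In the rank-one case this obstacle does not arise: $M_n(z)$ is scalar, the function $z\mapsto G^n(z)-\tta_1^{-1}$ is monotone on each gap $(\la_k^n,\la_{k+1}^n)$, and its value at $z=\la_k^n\pm\eps_n$ is, by the far-part concentration alone, close to $\tau_n(z)-\tta_1^{-1}\approx 1/\udl{\tta}-1/\tta_1\neq 0$; thus the unique zero in the gap must lie within $\eps_n$ of one endpoint. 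This is exactly the reason why \eqref{norankone} is only needed for $r\ge 2$.
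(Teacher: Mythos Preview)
Your approach is essentially the paper's: characterize the spectrum via $\det M_n(z)$, use Weyl plus Theorem~\ref{241109.17h50} to localize $\wtl_i^n$, split $G^n_{s,t}(z)$ at $m_n$, control the close part via delocalisation $|\alpha_{s,j}|\le n^{-1/2+\eps}$ and the far part via Hanson--Wright with \eqref{ecarts_vp_190110}/\eqref{norankone}, then conclude that $M_n(z)$ is a small perturbation of $\diag(1/\udl\theta-1/\theta_s)$ on the forbidden region.

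There is, however, a genuine gap in how you set up the forbidden region. You take $F_n=\{z\le a+\delta:\;d(z,\{\la_k^n:k\le i+r-r_0\})>\eps_n\}$ and then worry about the range $r<j\le m_n$ ``where $z$ may approach some $\la_j^n$''. Your treatment of that range does not work: conditions \eqref{ecarts_vp_190110} and \eqref{norankone} concern only the sum over $j>m_n$, so they give no control on $j\le m_n$; and if $z$ is genuinely close to some intermediate $\la_j^n$ then neither the off-diagonal concentration nor the statement ``the diagonal contributions reconstruct $\tfrac1n\Tr(z-X_n)^{-1}\to G_{\mu_X}(z)$'' holds.

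The fix is to use the Weyl bound you already quote \emph{before} defining the forbidden region: since $\wtl_i^n\le\la^n_{i+r-r_0}$, you only need $\det M_n(z)\ne 0$ on
\[
\Omega_n:=\bigl\{z\in[\zeta,\la^n_{i+r-r_0}]:\;\min_{1\le k\le i+r-r_0}|z-\la_k^n|>\eps_n\bigr\}.
\]
On $\Omega_n$ one has $z\le\la^n_{i+r-r_0}-\eps_n$, hence for every $j>i+r-r_0$ automatically $\la_j^n-z\ge\la^n_{i+r-r_0}-z>\eps_n$. Thus $|z-\la_j^n|>\eps_n$ for \emph{all} $j$, and the entire close part $j\le m_n$ is bounded by $m_n\cdot\eps_n^{-1}\cdot n^{-1+2\eps}=O(n^{\alpha-\alpha'+2\eps})=o(1)$ in one stroke. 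The ``main obstacle'' you describe is then an artefact of the too-large $F_n$ and simply does not arise; the role of \eqref{norankone} is purely to feed Proposition~\ref{hanson} for the far-part off-diagonal bilinear form, not to rescue anything near the close eigenvalues.
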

Moreover, in the case where the perturbation  has rank one,
we can locate exactly in the neighborhood of
which eigenvalues of the non-perturbed matrix the eigenvalues
of the perturbed matrix lie.

We state hereafter the result for the smallest eigenvalues, but of
course a similar statement holds for the largest ones.
\begin{Th}\label{exact1d}
Let $(\widetilde\lambda_i^n)_{i\ge 1}$ be the
eigenvalues of $X_n+\theta u_1u_1^*$, with $\tta<0$.  Then, under Assumption \ref{hyponG} and Hypothesis \ref{hypspec},
if 
 \eqref{16511} and \eqref{ecarts_vp_190110} in Hypothesis   \ref{H3a}  $[p,\alpha,a]$ hold for some $\alpha\in (0,1)$
and  a  positive integer $p $,  then  for any 
$\alpha^\prime > \alpha$, we have  
\begin{itemize}
\item[(i)]  if $\theta<\udl{\tta}$, 
$\widetilde\lambda_1^n$ converges to $\rho_\theta<a$
whereas $n^{1-\alpha'}(\widetilde\lambda_{i+1}^n-\lambda_{i}^n)_{1\le i\le p-1}$
vanishes in probability as $n$ goes to infinity,\\
\item[(ii)] if $\theta\in (\udl{\tta}, 0)$,
$n^{1-\alpha'}(\widetilde\lambda_{i}^n-\lambda_{i}^n)_{1\le i\le p}$
vanishes in probability as $n$ goes to infinity,\\
\item[(iii)] if, instead of \eqref{16511} and \eqref{ecarts_vp_190110} in Hypothesis   \ref{H3a}  $[p,\alpha,a]$, one supposes  \eqref{16511} and \eqref{ecarts_vp_190110} in Hypothesis   \ref{H3a}  $[p,\alpha,b]$ to hold,  then
$n^{1-\alpha'}(\widetilde\lambda_{n-i}^n-\lambda_{n-i}^n)_{0\le i< p}$
vanishes in probability as $n$ goes to infinity.
\end{itemize}
\end{Th}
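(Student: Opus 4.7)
The plan is to exploit the secular equation for the rank-one perturbation. By the matrix determinant lemma,
\[
\det(zI - \tilde X_n) = \det(zI - X_n)\bigl(1 - \theta G^n_{1,1}(z)\bigr),
\]
so almost surely the eigenvalues of $\tilde X_n$ are either eigenvalues of $X_n$ or solutions of $G^n_{1,1}(z) = 1/\theta$, and on each interval between consecutive poles $\lambda_k^n$ the function $G^n_{1,1}$ is strictly decreasing from $+\infty$ to $-\infty$ (so each such interval contains exactly one eigenvalue of $\tilde X_n$). Combined with Cauchy interlacing ($\tilde\lambda_1^n\le\lambda_1^n$ and $\lambda_{i-1}^n\le\tilde\lambda_i^n\le\lambda_i^n$ for $i\ge 2$), each statement of the theorem reduces to determining the sign of $G^n_{1,1}(\lambda_i^n \pm \eps n^{-1+\alpha'}) - 1/\theta$ at a chosen test point, for any fixed $\eps>0$. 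The convergence $\tilde\lambda_1^n\to\rho_\theta$ in case (i) is immediate from Theorem~\ref{241109.17h50cor11}, and case (iii) follows by the analogous argument at the upper edge using Hypothesis~\ref{H3a}$[p,\alpha,b]$; I focus on case (ii) and on case (i) with $i\ge 1$.

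Writing $v_k = \langle u_1, e_k^n\rangle$ in the eigenbasis of $X_n$, I split
\[
G^n_{1,1}(z) = \underbrace{\frac{|v_i|^2}{z - \lambda_i^n}}_{T_1} + \underbrace{\sum_{\substack{1 \le k \le m_n \\ k\ne i}} \frac{|v_k|^2}{z-\lambda_k^n}}_{T_2} + \underbrace{\sum_{k>m_n} \frac{|v_k|^2}{z-\lambda_k^n}}_{T_3},
\]
with $m_n$ as in Hypothesis~\ref{H3a}. If the relevant spacing $\lambda_i^n-\lambda_{i-1}^n$ (case (ii)) or $\lambda_{i+1}^n-\lambda_i^n$ (case (i)) is itself at most $\eps n^{-1+\alpha'}$, the conclusion is automatic from interlacing; otherwise $|z-\lambda_k^n|\gtrsim\eps n^{-1+\alpha'}$ for every $k\ne i$ with $k\le m_n$. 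Concentration of each $|v_k|^2$ around $1/n$ (Proposition~\ref{hanson} applied to $A=e_k^n e_k^{n*}$) and a union bound over the $m_n=O(n^\alpha)$ indices then give $T_1=O(n^{-\alpha'})$ and $|T_2|=O(m_n n^{-\alpha'})=O(n^{\alpha-\alpha'})=o(1)$, using precisely $\alpha'>\alpha$. For $T_3$, the Hilbert--Schmidt bound \eqref{ecarts_vp_190110} plus Proposition~\ref{hanson} give concentration of $T_3$ around its mean $\frac{1}{n}\sum_{k>m_n}(z-\lambda_k^n)^{-1}$; the monotonicity inequality $|z-\lambda_k^n|\ge|\lambda_p^n-\lambda_k^n|$ (valid for $z\le\lambda_p^n$ and $k>m_n$) combined with \eqref{16511} identifies this mean with $1/\udl{\tta}$ up to $o(1)$ for the appropriate test point.

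Putting everything together, $G^n_{1,1}(z)=1/\udl{\tta}+o(1)$ in probability, so the sign of $G^n_{1,1}(z)-1/\theta$ is asymptotically that of $1/\udl{\tta}-1/\theta$, which by Assumption~\ref{hypthetamin} is strictly positive in case (ii) and strictly negative in case (i), yielding the desired bracketing of $\tilde\lambda_i^n$ (resp.\ $\tilde\lambda_{i+1}^n$) in probability. The main technical difficulty is the two-sided control of $\E[T_3]$: hypothesis \eqref{16511} supplies only the $\liminf$ lower bound on $\frac{1}{n}\sum_{k>m_n}(\lambda_p^n-\lambda_k^n)^{-1}$, which directly suffices for case (ii) (where a lower bound on $T_3$ is needed); for case (i), where an upper bound is required, one further invokes the identity $\frac{1}{n}\sum_{k>m_n}(z-\lambda_k^n)^{-1}=G_{\mu_n}(z)-\frac{1}{n}\sum_{k\le m_n}(z-\lambda_k^n)^{-1}$ together with the convergence $G_{\mu_n}(w)\to G_{\mu_X}(w)\to 1/\udl{\tta}$ as $w\to a^-$ from outside the limiting support, transferring this estimate to the test point $z$ via the monotonicity of $z\mapsto G_{1,1}^n(z)$ combined with the trivial interlacing bound whenever the relevant spacing is small.
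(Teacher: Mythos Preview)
Your approach is essentially the paper's: secular equation plus interlacing, then a decomposition of $G^n_{1,1}(z)$ into a small-index part (your $T_1+T_2$, the paper's $(I-P)$-projected term) and a bulk part (your $T_3$, the paper's $P$-projected term), with $T_3$ shown to concentrate at $1/\underline\theta$ via Proposition~\ref{hanson} and the squeeze between \eqref{16511} and $G_{\mu_X}(a-\epsilon)\to 1/\underline\theta$.

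There is one slip worth fixing. Your claim that, once the relevant spacing exceeds $\eps n^{-1+\alpha'}$, one has $|z-\lambda_k^n|\gtrsim\eps n^{-1+\alpha'}$ for every $k\ne i$ with $k\le m_n$ is not correct as written: at $z=\lambda_i^n-\eps n^{-1+\alpha'}$ in case~(ii), the quantity $z-\lambda_{i-1}^n$ may be arbitrarily small positive, so your bound on $T_2$ can fail. The repair is trivial (assume spacing $>2\eps n^{-1+\alpha'}$, or equivalently test at $\lambda_i^n-\tfrac{\eps}{2}n^{-1+\alpha'}$), but the paper sidesteps this by working uniformly on the whole interval $\Lambda_n=(\lambda_{i-1}^n+n^{-1+\alpha'},\lambda_i^n-n^{-1+\alpha'})$, on which $|z-\lambda_k^n|\ge n^{-1+\alpha'}$ for \emph{every} $k$ automatically. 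For the upper bound on $T_3$ needed in case~(i), what actually works is the monotonicity of $w\mapsto\frac{1}{n}\sum_{k>m_n}(w-\lambda_k^n)^{-1}$ on $(-\infty,\lambda_{m_n+1}^n)$, comparing to its value at $w=a-\epsilon$; invoking monotonicity of the full $G^n_{1,1}$ is not helpful since that function has poles at every $\lambda_k^n$. This is precisely how the paper obtains the two-sided estimate~\eqref{18610.10h48}.
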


\begin{Th}\label{corstick} Consider the i.i.d. model and let $(\widetilde\lambda_i^n)_{i\ge 1}$ be the
eigenvalues of $X_n+\sum_{i=1}^r\theta_i u_iu_i^*$. Let $p_-$ (resp. $p_+$) be
the number of indices $i$ so that $\rho_{\theta_i}<a$ (resp. $\rho_{\theta_i}>b$).
We
assume that  Assumptions \ref{hyponG} and \ref{hypthetamin},  Hypothesis \ref{hypspec}, and   \eqref{16511} and \eqref{ecarts_vp_190110} in 
  Hypotheses   \ref{H3a}  $[p,\alpha,a]$ and $[q,\alpha,b]$ hold for some $\alpha\in (0,1)$
and     integers $p , q$. Then,  for all  $\al'>\al$, for all  fixed  $1\le i\le p-(p_-+r)$ and $0\le j<p-(p_++r)$,  $$n^{1-\al'}(\widetilde\lambda_{p_-+i}^n-\lambda^n_{i})\qquad\textrm{ and }\qquad n^{1-\al'}(\widetilde\lambda_{n-(p_++j)}^n-\lambda^n_{n-j})$$ both vanish in \pro as $n$ goes to infinity.
\end{Th}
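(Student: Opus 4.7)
The plan is to prove Theorem \ref{corstick} by induction on $r$, reducing the multi-rank case to the rank-one result Theorem \ref{exact1d}. Write $\widetilde{X}_n = X_n^{(r-1)} + \tta_r u_r u_r^{*}$ with $X_n^{(r-1)} := X_n + \sum_{i<r}\tta_i u_iu_i^{*}$. In the i.i.d.\ model, $u_r$ is independent of $u_1,\ldots,u_{r-1}$, so conditionally on the latter $X_n^{(r-1)}$ is a fixed Hermitian matrix and $u_r$ remains an admissible random rank-one spike for it; Theorem \ref{exact1d} can therefore be applied at the inductive step. Note that the thresholds $\udl\tta, \ovl\tta$ depend only on $\mu_X$ and are unchanged under any finite-rank perturbation, so the sign/criticality classification of each $\tta_i$ is consistent across all inductive stages.

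Denote by $p_\pm^{(r-1)}$ the number of eigenvalues of $X_n^{(r-1)}$ escaping below $a$ or above $b$; the inductive hypothesis asserts, with overwhelming probability, $\lambda_{p_-^{(r-1)}+j}(X_n^{(r-1)}) = \lambda_j^n + o(n^{-1+\alpha'})$ in the permitted index range, together with the symmetric statement at the upper edge. The rank-one step splits on $\tta_r$: if $\tta_r<\udl\tta$ then $p_-=p_-^{(r-1)}+1$ and Theorem \ref{exact1d}(i) yields $\widetilde\lambda^n_{j+1} = \lambda_j(X_n^{(r-1)}) + o(n^{-1+\alpha'})$; if $\udl\tta<\tta_r<0$ then $p_-=p_-^{(r-1)}$ and Theorem \ref{exact1d}(ii) yields $\widetilde\lambda^n_j = \lambda_j(X_n^{(r-1)}) + o(n^{-1+\alpha'})$; finally if $\tta_r>0$ then $p_-$ is unchanged and the $a\leftrightarrow b$-symmetric analogue of Theorem \ref{exact1d}(iii), obtained via the involution $X\mapsto -X$, $\tta\mapsto-\tta$, gives the same conclusion for the smallest eigenvalues under Hypothesis \ref{H3a}$[p,\alpha,a]$. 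In each case, composing with the inductive approximation $\lambda_j(X_n^{(r-1)})=\lambda_{j-p_-^{(r-1)}}^n+o(n^{-1+\alpha'})$ produces $\widetilde\lambda^n_{p_-+i} = \lambda_i^n + o(n^{-1+\alpha'})$. Since only $r$ rank-one steps are involved, the overwhelming-probability events intersect to another overwhelming-probability event and the triangle inequality controls the cumulative error; the right-edge statement follows by the symmetric argument using $[q,\alpha,b]$.

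The main obstacle is that $X_n^{(r-1)}$ does not satisfy Hypothesis \ref{hypspec} literally as soon as $p_\pm^{(r-1)}\ge 1$: a bounded number of its extreme eigenvalues may escape $[a,b]$, so Theorem \ref{exact1d} cannot be invoked verbatim. I would resolve this through a mild extension of Theorem \ref{exact1d} allowing a bounded ($n$-independent) number of outlier eigenvalues in the base matrix; inspection of its proof shows that Hypothesis \ref{hypspec} is used only to control the resolvent in a fixed neighbourhood of $a$ and $b$, and the outlier eigenvalues lie at a definite distance from this neighbourhood, contributing only $O(1)$ quantities that can be absorbed. Likewise, the spread conditions \eqref{16511} and \eqref{ecarts_vp_190110} transfer from $X_n$ to $X_n^{(r-1)}$ via Cauchy interlacing: each rank-one step shifts eigenvalue indices by at most one, so the sums $\sum_{i>m_n}(\lambda_p-\lambda_i)^{-s}$ are modified only by a constant factor, which accounts for the margins $(p_-+r)$ and $(p_++r)$ appearing in the index range of the statement. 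With this extension of Theorem \ref{exact1d} in hand, the induction closes and yields the stated $n^{-1+\alpha'}$ bound in probability.
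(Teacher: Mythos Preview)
Your proposal is correct and follows essentially the same route as the paper: an induction on the number of rank-one summands, using that in the i.i.d.\ model the last vector $u_r$ is independent of the partial sum $X_n^{(r-1)}$, transferring the spread conditions \eqref{16511}--\eqref{ecarts_vp_190110} via Weyl/Cauchy interlacing, and then localising each new eigenvalue by a sign analysis of the rank-one function $f_n$ as in Theorem \ref{exact1d}. You also correctly identify the one genuine obstacle, namely that Hypothesis \ref{hypspec} fails for $X_n^{(r-1)}$ once outliers appear, and you propose the right fix (the outliers sit at definite distance from $[a,b]$ and contribute only bounded terms to the relevant resolvent quantities); the paper does exactly this, redoing the argument of Lemma \ref{8210.00h29} on the intervals between consecutive intermediate eigenvalues rather than quoting Theorem \ref{exact1d} verbatim.
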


Note that if $ p-(p_-+r)\le 0$ (resp. if $p-(p_++r)<0$), then the statement of the theorem is empty as far as $i$'s (resp. $j$'s) are concerned. The same convention is made throughout the proof. 


\subsection{Proofs}
Let us first prove Theorem  \ref{theostick}. Let us choose $i_0\in I_a$ and study the behaviour of $\wtl_{i_0}^n$
(the case of the largest eigenvalues can be treated similarly). We assume throughout the section that Hypotheses \ref{hypspec}, \ref{H3a} $[r, \al, a]$ and  Assumptions \ref{hyponG} and \ref{hypthetamin}
are satisfied. We also fix $\alpha^\prime > \alpha.$

We  know, by Lemma \ref{detf},
 that the  eigenvalues    of $\wtX$ which are not eigenvalues of $X_n$ are the $z$'s \st  \be\label{6110.11h31}
  \det( M_n(z))=0,  \ee  where \be\label{6110.11h31.bis0511}M_n(z)=\lf[
G_{s,t}^{n}(z)
\ri]_{s,t=1}^{ r}-\diag(\theta_1^{-1},\ldots, \theta_r^{-1})\ee and  for all $s,t$, 
$$G_{s,t}^n(z)=\langle u^n_s, (z-X_n)^{-1} u^n_t\rangle.$$

Recall that by Weyl's interlacing inequalities (see \cite[Th. A.7]{alice-greg-ofer})
$$\widetilde \lambda^n_{i_0}\le \lambda^n_{i_0+r-r_0}.$$

Let  $\zeta$ be a fixed constant \st $\max_{1\le i\le p_-}  \rho_{\theta_i} <\zeta<a$. By Theorem \ref{241109.17h50}, we know that
\begin{lem}\label{8210.1h06-zeta} With overwhelming probability, $\widetilde \lambda^n_{i_0}>\zeta$. \end{lem}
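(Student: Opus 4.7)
The plan is to obtain this lemma as a direct corollary of the already-proven almost sure localisation result, Theorem~\ref{241109.17h50}. Indeed, the conclusion that $\widetilde\la_{i_0}^n>\zeta$ with overwhelming probability is equivalent to saying that $\widetilde\la_{i_0}^n$ lies in the open set $(\zeta,+\infty)$, and it is enough to find a neighborhood of the limit $\rho_{\theta_{i_0}}$ of $\widetilde\la_{i_0}^n$ that is contained in $(\zeta,+\infty)$.

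First I would record the two trivial facts that allow Theorem~\ref{241109.17h50} to be applied. Since $i_0\in I_a$, by the definition of $I_a$ one has $\rho_{\theta_{i_0}}=a$; and since $I_a=[p_-+1,r_0]$, the integer $i_0$ satisfies $i_0\le r_0$ and is fixed independently of $n$ (it lies in the finite set $\{1,\dots,r\}$). Hence Theorem~\ref{241109.17h50} applies to the index $i_0$: for any neighborhood $V$ of $\rho_{\theta_{i_0}}=a$, one has $\widetilde\la_{i_0}^n\in V$ with overwhelming probability.

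It then suffices to exhibit such a $V$ lying entirely to the right of $\zeta$. By the hypothesis on $\zeta$, one has $\zeta<a$, so that the open interval $V:=(\zeta,a+1)$ is a neighborhood of $a=\rho_{\theta_{i_0}}$ and every element of $V$ exceeds $\zeta$. Applying Theorem~\ref{241109.17h50} with this choice of $V$ yields $\widetilde\la_{i_0}^n>\zeta$ with overwhelming probability, which is the desired conclusion.

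No step is expected to be a serious obstacle: the lemma is a convenience statement, isolating for later use the a.s.\ lower bound on the sticking eigenvalues implied by Theorem~\ref{241109.17h50}. The only point to be careful about is that the index $i_0$ must be fixed (so that Theorem~\ref{241109.17h50}, which is stated for an index independent of $n$, is applicable), but this is automatic because $i_0$ lies in the $n$-independent set $I_a\subseteq\{1,\dots,r\}$.
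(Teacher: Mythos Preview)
Your proposal is correct and is exactly the paper's approach: the paper simply writes ``By Theorem~\ref{241109.17h50}, we know that'' before stating the lemma, and your argument spells out precisely this application (choosing a neighborhood $V$ of $\rho_{\theta_{i_0}}=a$ contained in $(\zeta,+\infty)$).
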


We want to show that \eqref{6110.11h31}
is not possible on $$\Omega_n:= \left\{ z\in[\zeta, \lambda^n_{i_0+r-r_0}]\ste  \min_{1\le k\le i_0+ r-r_0}
|z-\la_{k}^n|> n^{-1+\alpha^\prime}
\right\}.$$

The following lemma deals with the asymptotic behaviour of the {\it off-diagonal terms} of the matrix $M_n(z)$ of \eqref{6110.11h31.bis0511}.
\begin{lem}\label{8210.1h06}For $s\neq t$ and  $\kappa>0$ small enough,
\be\label{bon15511}\sup_{z\in \Omega_n
}|G_{s,t}^{n}(z)| \le n^{-\kappa}
 \ee
with overwhelming probability.
\end{lem}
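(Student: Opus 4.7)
The plan is to exploit that, for $s\neq t$ and in the i.i.d.\ model, $G^n_{s,t}(z)=\langle u_s^n,(z-X_n)^{-1}u_t^n\rangle$ is a centred bilinear form in two \emph{independent} sub-Gaussian vectors; a Hanson--Wright-type concentration (Proposition \ref{hanson}) will then deliver the bound as soon as we control its variance uniformly in $z\in\Omega_n$. First, I would observe that on $\Omega_n$ we actually have $|z-\la_k^n|\ge n^{-1+\alpha'}$ for \emph{all} $k\in\{1,\ldots,n\}$: for $k\le i_0+r-r_0$ this is the definition of $\Omega_n$, and for $k>i_0+r-r_0$ it comes from $\la_k^n\ge \la_{i_0+r-r_0}^n$ together with the fact that $z\in\Omega_n\cap[\zeta,\la_{i_0+r-r_0}^n]$ forces $z\le \la_{i_0+r-r_0}^n-n^{-1+\alpha'}$. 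Moreover, since $i_0\le r_0$, one has $i_0+r-r_0\le r$, hence $z\le \la_r^n$.

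The crucial point is the variance estimate. Splitting at the threshold $m_n$ of Hypothesis \ref{H3a}$[r,\alpha,a]$,
\begin{equation*}
 \|(z-X_n)^{-1}\|_{HS}^2=\sum_{k=1}^{m_n}\frac{1}{(z-\la_k^n)^2}+\sum_{k>m_n}\frac{1}{(z-\la_k^n)^2}\le m_n\cdot n^{2-2\alpha'}+\sum_{k>m_n}\frac{1}{(\la_k^n-\la_r^n)^2},
\end{equation*}
where I used $|z-\la_k^n|\ge n^{-1+\alpha'}$ in the first sum and $|z-\la_k^n|\ge \la_k^n-\la_r^n$ in the second (valid for $k>m_n\ge r$ when $n$ is large, thanks to $z\le \la_r^n$). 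Since $m_n=O(n^\alpha)$ and $\alpha'>\alpha$, the first contribution is at most $n^{2+\alpha-2\alpha'}=o(n^2)$, whereas \eqref{ecarts_vp_190110} with $p=r$ bounds the second by $n^{2-\eta_2}$. Thus $\|(z-X_n)^{-1}\|_{HS}^2\le n^{2-\delta}$ for some $\delta>0$, uniformly in $z\in\Omega_n$. Combined with the uniform operator-norm bound $\|(z-X_n)^{-1}\|\le n^{1-\alpha'}$ and independence of $u_s^n$ and $u_t^n$, Proposition \ref{hanson} then yields constants $\kappa,c,\eta>0$ such that for every fixed $z\in\Omega_n$, $\Pro(|G^n_{s,t}(z)|\ge \tfrac12 n^{-\kappa})\le e^{-cn^\eta}$.

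To upgrade to a uniform bound over $\Omega_n$, I would use the same net argument as in the proof of Lemma \ref{toto}: the derivative $\frac{d}{dz}G^n_{s,t}(z)=-\langle u_s^n,(z-X_n)^{-2}u_t^n\rangle$ has modulus at most $n^{2-2\alpha'}\|u_s^n\|\|u_t^n\|$, and $\|u_i^n\|$ is bounded with overwhelming probability (Proposition \ref{hanson} again), so on $\Omega_n$ the map $z\mapsto G^n_{s,t}(z)$ is Lipschitz with constant $\le n^{C}$ for some $C>0$. Discretising $\Omega_n\subset[\zeta,\la_r^n]$ with a mesh of size $n^{-K}$ for $K$ large enough that $n^{-K+C}\le \tfrac12 n^{-\kappa}$ and applying a union bound over the $O(n^K)$ grid points yields \eqref{bon15511}. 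The orthonormalised model is treated exactly as in Lemma \ref{toto} by writing the $u_i^n$'s in terms of i.i.d.\ vectors via Proposition \ref{convm}, the resulting correction being negligible at the scale $n^{-\kappa}$. The main obstacle is really the variance bound: the naive estimate $\|(z-X_n)^{-1}\|_{HS}^2\le n\cdot n^{2-2\alpha'}$ is of order $\ge n^2$ and useless, and one must use \emph{both} the bulk-spread estimate \eqref{ecarts_vp_190110} and the $n^{-1+\alpha'}$ edge gap afforded by $\Omega_n$ on the $m_n$ top eigenvalues.
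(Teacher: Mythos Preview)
Your proof is correct and in fact takes a cleaner route than the paper. The paper diagonalises $X_n=O^*D_nO$, first establishes a delocalisation estimate $\max_{l,s}|(Ou_s^n)_l|\le n^{-1/2+\e}$, and then decomposes $G_{s,t}^n(z)=A_n(z)+B_n(z)$ along the projection $P$ onto the eigenspace of $\lambda^n_{m_n+1},\ldots,\lambda^n_n$: the edge piece $A_n$ is handled via delocalisation plus the gap $|z-\lambda_k^n|\ge n^{-1+\alpha'}$, and the bulk piece $B_n$ via Proposition \ref{hanson} using \emph{both} \eqref{ecarts_vp_190110} and \eqref{norankone}. You instead apply Proposition \ref{hanson} to the full resolvent in one shot, and your key observation is that the fourth-moment input $\Tr((AA^*)^2)$ needed there can be dominated by $\|A\|^2\Tr(AA^*)$, so the operator-norm bound $\|(z-X_n)^{-1}\|\le n^{1-\alpha'}$ (valid on all of $\Omega_n$, as you correctly check) together with your HS estimate suffices. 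This bypasses both the delocalisation step and the separate hypothesis \eqref{norankone}; in effect you recover \eqref{norankone} from \eqref{ecarts_vp_190110} and the edge gap. The paper's splitting has the advantage that the same projection $P$ and the same diagonal estimate reappear in the proof of Lemma \ref{8210.00h29}, so some work is reused; your argument is shorter for the present lemma but does not feed directly into the next one. For the orthonormalised model you should note that, after expanding $u_s^n$ via $W^n$, the diagonal cross-terms $\overline{W^n_{s,j}}W^n_{t,j}\cdot\frac{1}{n}\langle g_j^n,(z-X_n)^{-1}g_j^n\rangle$ require the uniform boundedness of $\frac{1}{n}\tr((z-X_n)^{-1})$ on $\Omega_n$, which does hold (by \eqref{16511} for the $P$-part and the edge gap for the $(I-P)$-part) but is a little more than ``exactly as in Lemma \ref{toto}''.
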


The following lemma deals with the asymptotic behaviour of the {\it diagonal terms} of the matrix $M_n(z)$ of \eqref{6110.11h31.bis0511}.
\begin{lem}\label{8210.00h29} For all $s=1, \ldots, r$, for all  $\delta>0$, 
any $\delta>0$,
 \begin{equation}\label{bon_prec} \sup_{z\in\Omega_n}\lf|G_{s,s}^{n}(z)-\ff{\udl{\tta}}\ri|\le  \delta
 \end{equation}

with overwhelming probability.  
\end{lem}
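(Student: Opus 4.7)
The approach is to decompose $G_{s,s}^n(z)-\ff{\udl{\tta}}$ into a concentration piece $G_{s,s}^n(z)-G_{\mu_n}(z)$, a truncation piece $G_{\mu_n}(z)-\widetilde G_n(z)$, where $\widetilde G_n(z):=\ff{n}\sum_{i>m_n}\ff{z-\la_i^n}$, and a deterministic piece $\widetilde G_n(z)-\ff{\udl{\tta}}$, and to bound each by $\delta/3$ uniformly on $\Omega_n$ with overwhelming probability. A crucial preliminary observation is that the constraint defining $\Omega_n$, combined with $z\le \la_{i_0+r-r_0}^n$, forces $z\le \la_{i_0+r-r_0}^n-n^{-1+\al'}$, and since $\la_k^n\ge \la_{i_0+r-r_0}^n$ for $k\ge i_0+r-r_0$, one has $|z-\la_k^n|\ge n^{-1+\al'}$ for \emph{every} index $k\in\{1,\ldots,n\}$ on $\Omega_n$.

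For the concentration piece, Proposition \ref{hanson} is applied to $A:=(z-X_n)^{-1}$. Its operator norm is bounded by $n^{1-\al'}$, and its Hilbert--Schmidt norm by
\[\|A\|_F^2 \;\le\; \sum_{k\le m_n}\f{1}{(z-\la_k^n)^2}+\sum_{k>m_n}\f{1}{(\la_k^n-\la_r^n)^2} \;\le\; m_n\, n^{2-2\al'}+n^{2-\eta_2},\]
using \eqref{ecarts_vp_190110} and the inequality $\la_k^n-z\ge \la_k^n-\la_r^n$ valid for $k>m_n\ge r$ and $z\le\la_r^n$. Since $\al'>\al$, both terms are of the form $n^{2-\eta}$ for some $\eta>0$, so Hanson--Wright yields $|G_{s,s}^n(z)-G_{\mu_n}(z)|\le \delta/3$ at any fixed $z$ with probability at least $1-2e^{-cn^{\min(\eta,\al')}}$. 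Uniformity over $\Omega_n$ is obtained by the now-standard net-plus-Lipschitz argument from the proof of Lemma \ref{toto}, using that the Lipschitz constant of $z\mapsto G_{s,s}^n(z)$ on $\Omega_n$ is bounded above by $\|u_s^n\|^2\|(z-X_n)^{-1}\|_{\textrm{op}}^2=O(n^{2-2\al'})$ with overwhelming probability.

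The truncation piece is immediate: since $|z-\la_k^n|\ge n^{-1+\al'}$ for all $k$, one has $|G_{\mu_n}(z)-\widetilde G_n(z)|\le \f{m_n}{n}n^{1-\al'}=O(n^{\al-\al'})=o(1)$. For the deterministic piece, the plan is to exploit monotonicity: each summand $1/(z-\la_i^n)$ with $\la_i^n>z$ is strictly decreasing in $z$, so $\widetilde G_n$ is decreasing on $(-\infty,\la_{m_n+1}^n)$, which contains $[\zeta,\la_r^n]\supseteq[\zeta,\la_{i_0+r-r_0}^n]$ for $n$ large. Hence $\widetilde G_n(\la_r^n)\le \widetilde G_n(z)\le \widetilde G_n(\zeta)$. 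The upper bound $\widetilde G_n(\zeta)\to G_{\mu_X}(\zeta)$ follows from weak convergence (the removed terms contribute $O(m_n/n)=o(1)$ since $\zeta<a$ is fixed), and choosing $\zeta$ close enough to $a$---which is permitted because Assumption \ref{hypthetamin} gives $\max_i\rho_{\tta_i}<a$ strictly---ensures $|G_{\mu_X}(\zeta)-\ff{\udl{\tta}}|\le \delta/3$ by the very definition of $\udl{\tta}$. The lower bound $\widetilde G_n(\la_r^n)=-\ff{n}\sum_{i>m_n}(\la_i^n-\la_r^n)^{-1}$ is exactly the quantity appearing in \eqref{16511}, which furnishes $\liminf_n\widetilde G_n(\la_r^n)\ge \ff{\udl{\tta}}$.

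The main obstacle is the asymmetry between the upper bound, which comes from weak convergence at an interior point $\zeta$ bounded away from $a$, and the lower bound, which comes from the quantitative edge estimate \eqref{16511} at $\la_r^n\to a$; the monotonicity of $\widetilde G_n$ is what makes these two disparate ingredients compatible, provided $\zeta$ is chosen as a function of $\delta$ before the rest of the argument is carried out. Once the three pieces are assembled one obtains $|G_{s,s}^n(z)-\ff{\udl{\tta}}|\le\delta$ uniformly on $\Omega_n$ with overwhelming probability, and the orthonormalised model is handled via the substitution $U_n=W^nG_n$ with $\sqrt{n}W^n\to I$, exactly as at the end of the proof of Lemma \ref{toto}.
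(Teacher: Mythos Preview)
Your argument is correct and follows essentially the same route as the paper: concentration of $G^n_{s,s}(z)$ around the (truncated) empirical Stieltjes transform $\widetilde G_n(z)=\ff{n}\sum_{i>m_n}(z-\la_i^n)^{-1}$, followed by the deterministic convergence $\widetilde G_n(z)\to 1/\udl{\tta}$ obtained by sandwiching via monotonicity between \eqref{16511} (at the right endpoint $\la_r^n$) and weak convergence (at the left endpoint $\zeta$), then a grid argument for uniformity in $z$.

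The organizational difference is minor. The paper introduces the projection $P$ onto the eigenspace of $\la^n_{m_n+1},\ldots,\la^n_n$ and applies Hanson--Wright to $P(z-X_n)^{-1}P$, whose Hilbert--Schmidt norm is bounded directly by \eqref{ecarts_vp_190110}; the complementary piece $\lan u_s,(1-P)(z-X_n)^{-1}(1-P)u_s\ran$ is handled by $\|(1-P)u_s\|_2^2\approx m_n/n$. You instead apply Hanson--Wright to the full resolvent $(z-X_n)^{-1}$ and bound its Hilbert--Schmidt norm by splitting the eigenvalue sum at $m_n$, which gives the same $n^{2-\eta}$ control. Both are valid; the paper's version makes the role of $P$ symmetric across the concentration and the remainder, while yours avoids introducing $P$ in the concentration step at the cost of a slightly longer HS-norm estimate.

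Your remark that $\zeta$ must be taken as a function of $\delta$ is well spotted. The lemma as literally stated---with $\zeta$ fixed beforehand and then ``for all $\delta>0$''---cannot hold at $z=\zeta$, since $G^n_{s,s}(\zeta)\to G_{\mu_X}(\zeta)\neq 1/\udl{\tta}$ when $\zeta<a$ strictly. The paper's own proof has the same imprecision: its display \eqref{18610.10h48} is established only for $z\in[a-\eps_n,\la_p^n]$ with $\eps_n\to 0$, not for $z$ down to a fixed $\zeta<a$. Your resolution (choose $\zeta=\zeta(\delta)$ close enough to $a$, which Assumption~\ref{hypthetamin} permits) is exactly what is needed for the application in Theorem~\ref{theostick}, where a single sufficiently small $\delta$ suffices.
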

Let us assume these lemmas proven for a while and complete
the proof of Theorem  \ref{theostick}. By
 these two lemmas, for $z\in \Omega_n$,
 we find by expanding the
determinant that with overwhelming probability,
\be\label{1651111h30}\det (M_n(z))=\prod_{s=1}^r \left(G^n_{s,s}(z)-\frac{1}{\theta_i}\right)
+O(n^{-\kappa}),\ee where the $O(n^{-\kappa})$ is uniform on $z\in \Omega_n$.
Indeed, in the second term of the right hand side of $$\det (M_n(z))=\prod_{s=1}^r \left(G^n_{s,s}(z)-\frac{1}{\theta_i}\right)+\sum_{\sigma\in S_r\bck\{Id\}}
\operatorname{sign}(\si)\prod_{s=1}^r (G_{s, \si(s)}^n(z)-\one_{s=\si(s)}\tta_s^{-1}),$$ each diagonal term is bounded  and each non diagonal term is $O(n^{-\kappa})$.

Since for all $i$, $\tta_i\ne \udl{\tta}$, \eqref{1651111h30} and Lemma \ref{8210.00h29} allow to assert that with  overwhelming probability, for all $z\in \Omega_n$, $\det (M_n(z))\ne 0$. 
It completes the proof of the theorem. \qed

We finally prove the two last lemmas. 

\noindent
{\it Proof of Lemma \ref{8210.1h06}.}
Let us consider $z\in \Omega_n $  ($z$ might depend on $n$, but for notational brevity, we omit to denote it by $z_n$). 
We treat simultaneously the orthonormalised model
and the i.i.d. model (in the i.i.d. model, one just takes $W^n=I$ and replaces $\|( G^n(W^n)^T)_s\|_2$ by $\sqrt{n}$ in the proof below).
Observe that if we write $X_n=O^*D_nO$ with $D_n=(\lambda_1^n,\ldots,\lambda_n^n)$ and $O$ a unitary or orthogonal matrix,
\beq G_{s,t}^{n}(z)&=&\langle u^n_s, (z-X_n)^{-1}
u^n_t\rangle\\
&=& \sum_{l=1}^n \frac{\ovl{(Ou^n_s)_l} {(Ou^n_t)_l}}{z-\la_l^n}\eeq
The first step is to show that for any $\e>0$, with overwhelming probability,
\be\label{contou}\max_{l\le n, s\le r}
|(Ou^n_s)_l|\le n^{-\frac{1}{2}+\e}.\ee
Indeed, with $O_l$ the $l$th
row vector of $O$ and using the notations of Section  \ref{section.concentration.estimates},
$$(Ou^n_s)_l=\langle O_l, u^n_s\rangle=\frac{1}{\|( G^n(W^n)^T)_s\|_2}
\sum_{t=1}^r W^n_{s,t} \langle O_l, g^n_t\rangle.$$
But $g\mapsto \langle O_l, g^n_s\rangle$ is Lipschitz 
for the Euclidean norm with
constant one. Hence,
by concentration inequality due to
the log-Sobolev hypothesis (see e.g. \cite[section 4.4]{alice-greg-ofer}), there exists $c>0$ such that
for all $\delta>0$,
$$\Pro\left(|\langle O_l, g^n_s\rangle|>\delta\right)
\le 4e^{-c\delta^2}$$
so that
$$\Pro\left(\max_{l \le n, s\le r}
|\langle O_l, g^n_s\rangle|\ge  n^\e \right)\le  4n^4e^{-c n^{2\e}}.$$
From Proposition \ref{convm}, we know that  with overwhelming probability, 
$\|( G^n(W^n)^T)_s\|_2$
is bounded below by  $\sqrt{n}n^{-\e}$ and the entries of $W^n$ are of
order one. This gives therefore  \eqref{contou}.

We now make the following decomposition
\beq G_{s,t}^{n}(z)
&=& \underbrace{\sum_{l=1}^{m_n}\f{\ovl{(Ou_{s}^n)_l}(Ou_{t}^n)_l}{z -\la_l^n}}_{:=A_n(z)}+ \underbrace{\sum_{l=m_n+1}^{n}\f{\ovl{(Ou_{s}^n)_l}(Ou_{t}^n)_l}{z-\la_l^n}}_{:=B_n(z)}.
\eeq

Note that as   $|(Ou_{s}^n)_l|, 1\le l\le m_n$,
 are smaller than $n^{-\frac{1}{2} +\e'}$ by
\eqref{contou}, for any $\e'>0$,
 with overwhelming probability, we have,   uniformly on  $z\in\Omega_n$,
\beq\label{c1}
\left| A_n(z)\right|\le  m_n n^{1-\alpha'} n^{-1+2\e'}
= O(n^{\alpha-\alpha'+2\e'})
\eeq
We choose $0<\e'\le(\alpha'-\alpha)/4$ and now study $B_n(z)$ which can be written
$$B_n(z)=\langle u^n_s, P(z-X_n)^{-1}P u^n_t\rangle$$
with $P$ the orthogonal
projection onto the linear span of the eigenvectors of $X_n$ 
corresponding to the eigenvalues $\lambda_{m_n+1}^n,\ldots, \lambda_{n}^n$. By the second point in Proposition \ref{hanson}, with $z\in\Omega_n$,
for all $s\neq t$, 
\begin{multline*}
\Pro\left(\left|\langle g^n_s, P(z-X_n)^{-1}P g^n_t\rangle 
\right|
\ge \delta\sqrt{\tr(P(z-X_n)^{-2})+\kappa\sqrt{
\tr(P(z-X_n)^{-4})}}\right)\\ \le 4e^{-c\delta^2}+4e^{-c\min(\kappa, \kappa^2)}.
\end{multline*}
Moreover, by
 Hypothesis \ref{H3a}, for $n$ large enough,  for all $z\in \Omega_n$, 
 $$\tr(P(z-X_n)^{-2})\le n^{2-\eta_2}\textrm{ and }\tr(P(z-X_n)^{-4})\le n^{4-\eta_4}.$$
 We deduce
that there is $C, \eta>0$ \st for all $z\in \Omega_n$, 
\be\label{qwe}
\Pro\lf(\left|\frac{1}{n}\langle g^n_s, P(z-X_n)^{-1}P g^n_t\rangle
\right|> n^{-\frac{\eta_2\wedge \eta_4}{8}}\ri)\le Ce^{-n^{\eta}}
\ee
A similar control is verified for $s=t$ since we have, 
by Proposition \ref{hanson},
\be\label{poiu}
\Pro\left( \left|\frac{1}{n}\langle g_{s}, P(z-X_n)^{-1}P  g_{s}\rangle
-\frac{1}{n}\tr\left(P(z-X_n)^{-1}\right)\right|\ge \delta\right)
\le 4 e^{-c\min\{\delta^2n^{\eta_2},\delta n^{\eta_2/2}\}},\ee
whereas Hypothesis \ref{H3a} insures that
the term $\frac{1}{n}\tr( P(z-X_n)^{-1})
$ is bounded uniformly on $\Omega_n$. Thus, up to a change of the constants $C$ and $\eta$, there is a constant $M$ \st for all $z\in \Omega_n$,
 $$\Pro\lf(\lf|\frac{1}{n}\langle g_{s}, P(z-X_n)^{-1}P  g_{s}\rangle\ri|\ge M\ri)\le Ce^{-n^\eta}.$$
Therefore,  with Proposition \ref{convm} and developing
the vectors $u_s^n$'s as the normalised column vectors of  $G^n(W^{n})^{T}$, we conclude that, up to a change of the constants $C$ and $\eta$, for all $z\in \Omega_n$,
\be\label{contb2}
\Pro\lf(|B_n(z)| \ge n^{-\frac{\eta_2\wedge \eta_4}{8}}\ri)\le Ce^{-n^\eta}.
\ee
Hence, we have proved that there exists
$\kappa>0, C$ and $\eta >0$ so that  for all $z\in \Omega_n$,
$$\Pro\left(\left|G^n_{s,t}(z)\right|\ge n^{-\kappa}\right)\le Ce^{-n^{\eta}}.$$
We finally obtain this control uniformly on $z\in \Omega_n$
by noticing that $z\ra G^n_{s,t}(z)$ is Lipschitz
on $\Omega_n$, with constant bounded by $(\min|z-\lambda_i|)^{-2}\le
n^{2-2\alpha'}$.
Thus, if we take a grid $(z_k^n)_{0\le k\le cn^2}$
 of $\Omega_n$ with mesh $\le n^{-2+2\alpha'-\kappa}$ (there are about $n^2$ such
$z_k^n$'s)
 we have
$$\sup_{z\in\Omega_n}
\left|G^n_{s,t}(z)\right|\le \max_{1\le k\le
 cn^2} \left|G^n_{s,t}(z_k^n)\right| +n^{-\kappa}.$$
Since there are at most $cn^2$ such $k$  and $n^2$ possible $i,j$,
 we
conclude that
$$\Pro\left(\sup_{z\in \Omega_n}|G^n_{s,t}(z)|\ge
2n^{-\kappa}\right)\le  c^2n^4 Ce^{-n^{\eta}}$$
which completes the proof.\qed

\noindent{\it Proof of Lemma \ref{8210.00h29}.} We shall use the decomposition\be\label{17511.10h42}
G^n_{i,i}(z)=\langle u^n_i, P(z-X_n)^{-1}P
u^n_i\rangle+ \langle u^n_i,(1-P)(z-X_n)^{-1}(1-P)
u^n_i\rangle,\ee with $P$ as above the orthogonal
projection onto the linear span of the eigenvectors of $X_n$ 
corresponding to the eigenvalues $\lambda_{m_n+1}^n,\ldots, \lambda_{n}^n$, and then prove that for $z\in \Omega_n$, $$\langle u^n_s, P(z-X_n)^{-1}P
u^n_s\rangle\approx \ff{\udl{\tta}},$$ whereas \beq \langle u^n_s,(1-P)(z-X_n)^{-1}(1-P)
u^n_s\rangle&\le& \underbrace{\ff{\min_{1\le k\le m_n}|z-\la_k^n|}}_{\le n^{1-\al'}}  \underbrace{\|(1-P)
u^n_s\|_2^2}_{\approx n^{-1}\operatorname{rank}(1-P)}\\ &\approx&  n^{-\al'}m_n=O(n^{\al-\al'})=o(1).\eeq

Let us now give a formal proof. Again,
we first prove the estimate for a fixed $z\in \Omega_n$,
the uniform estimate on $z$ being  obtained by a grid
argument as in the previous proof (a key point being that the constants $C$ and $\eta$ of the definition of {\it overwhelming probability}
 are independent of the choice of $z\in \Omega_n$).

First, observe that  \eqref{16511} implies that for any sequence $\eps_n$ tending to zero, 
\be\label{18610.10h48}\lim_{n\to\infty}\sup_{\substack{a-\eps_n\le z \le \la_p^n}}\lf| \ff{n}\sum_{i=m_n+1}^n\ff{z-\la_{i}^n}- \ff{\udl{\tta}}\ri|= 0.\ee
Indeed, for all $\e>0$,  for $n$ \st $\la_n^p$ and $a-\eps_n$ are both $\ge a-\e$, we have,  for all $z\in [a-\eps_n, \la_p^n]$, 
$$ \ff{n}\sum_{i=m_n+1}^n\ff{\la_p^{n}-\la_{i}^n}\le\ff{n}\sum_{i=m_n+1}^n\ff{z-\la_{i}^n}\le\ff{n}\sum_{i=m_n+1}^n\ff{a-\e-\la_{i}^n},$$
so that \eqref{16511} and 
$$
\lim_{\e\downarrow 0 }\limsup_{n\ra\infty}
 \ff{n}\sum_{i=m_n+1}^n\ff{a-\e-\la_{i}^n}=\lim_{\e\downarrow 0 }G_{\mu_X}(a-\e)= \ff{\udl{\tta}}$$
imply \eqref{18610.10h48}.

 So let us consider $z\in \Omega_n$ ($z$ might depend on $n$, but for notational brevity, we omit to denote it by $z_n$). By the inequality  
$|z-\lambda_k^n|> n^{-1+\alpha'}$ for all $1\le k\le m_n$ and \eqref{17511.10h42}, 
we have \be\label{16511.DSKbye}
\lf|G^n_{s,s}(z)-\langle u^n_s, P(z-X_n)^{-1}P
u^n_s\rangle \ri|\le n^{1-\alpha'} \|(1-P)u^n_s\|_2^2.\ee
But  as in the 
previous proof,
we have 
$$\langle u^n_s, P(z-X_n)^{-1}P
u^n_s\rangle=\frac{n}{\|(G^n(W^n)^T)_s\|_2^2}\sum_{t,v=1}^{s}
W_{s,v}^n\ovl{W_{s,t}^n} \frac{1}{n}\langle g^n_t, P(z-X_n)^{-1}P
g^n_v\rangle$$
with, by \eqref{qwe}, the off diagonal terms  $t\neq v$
of order $n^{-\eta_2\wedge\eta_4/8}$ with overwhelming probability,
whereas the diagonal terms are close to $ \frac{1}{n}\tr(
P(z-X_n)^{-1})$ with overwhelming probability
by \eqref{poiu}.
Hence, we deduce with 
Proposition \ref{hanson} that 
for any $\delta>0$,
$$\left|\langle u^n_s, P(z-X_n)^{-1}P
u^n_s\rangle- \frac{1}{n}\tr(P((z-X_n)^{-1}))\right|\le \delta$$
with overwhelming  probability.
Hence, by \eqref{18610.10h48}, for any $\delta>0$ 
\be \label{eqlemma4.8}
\lf|\langle u^n_s, P(z-X_n)^{-1}P
u^n_s\rangle- \frac{1}{\underline{\theta}}\ri|\le \delta
\ee
with overwhelming probability.
On the other hand 
$$\|(1-P)u^n_s\|_2^2=\frac{1}{\|(G^n(W^n)^T)_s\|_2^2}\sum_{t,v=1}^r
W_{s,t}^n\ovl{W_{s,v}^n} \langle (1-P)g^n_t,(1-P)g^n_v\rangle$$
By Proposition \ref{convm}, the denominator is
of order $n$ with overwhelming probability, whereas
by Proposition \ref{hanson}, the numerator
is of order 
$m_n+n^\e\sqrt{m_n}$ (since $\tr(1-P)=m_n$) with overwhelming probability.
As $W^n$ is bounded by Proposition \ref{convm}
we conclude that
\be\label{16511.DSKbye2}\|(1-P)u^n_s\|_2^2\le 2\frac{m_n}{n}\ee
with overwhelming probability. 
Putting together Equations \eqref{16511.DSKbye},  \eqref{eqlemma4.8} and \eqref{16511.DSKbye2}, 
we have proved that 
for any $z\in\Omega_n$, any $\delta>0$, 
$$\lf|G^n_{s,s}(z)- \frac{1}{\underline{\theta}}\ri|\le \delta$$
with overwhelming probability, the constants $C$ and $\eta$ of the definition of {\it overwhelming probability} being independent of the choice of $z\in \Omega_n$
We do not detail the grid argument used to get a control uniform on $z$ because this argument   is similar to what we did in the proof of the previous lemma. \qed

  \noindent{\it Proof of Theorem \ref{exact1d}}.
In the one dimensional case, the eigenvalues 
of $\widetilde X_n$ which do not belong to
the spectrum of $X_n$ 
 are the zeroes of
\begin{equation}\label{mn}
f_n(z)=\frac{1}{n}\langle g, (z-X_n)^{-1} g\rangle -\varepsilon_n(g) \frac{1}{\theta}
\end{equation}
with $\varepsilon_n(g)=1$ or $\|g\|_2^2/n$ according to
the model we are considering. 
A straightforward study of the function $f_n$ tells us that the eigenvalues of $\widetilde X_n$ are  distinct from those
of $X_n$ as soon as $X_n$ has no multiple eigenvalue and $$(\textrm{matrix of the eigenvectors of $X_n$})^*\times g$$ has no null entry,
which we can always assume up
to modify $X_n$ and $g$ so slightly that the fluctuations
of the eigenvalues are not affected. We do not detail these arguments but the reader can refer 
to Lemmas 9.3, 9.4 and 11.2 of \cite{BGMpgd} for a full proof in the finite rank case.\\ 
Therefore, 
\eqref{mn} characterises all the eigenvalues of $\widetilde X_n$.
Moreover, by Weyl's interlacing properties, for $\theta<0$, 
$$\widetilde\lambda_1^n<\lambda_1^n<\widetilde\lambda_2^n<\lambda_2^n<\cdots<
\widetilde \lambda_n^n<\lambda_n^n\,.$$
Theorems \ref{241109.17h50} and \ref{theostick} thus already settle the
study of $\widetilde\lambda_1^n$ which either goes to $\rho_\theta$ or is at distance $O(n^{-1+\alpha'})$ of $\lambda_1^n$ depending on the strength 
of $\theta$. 
 We consider $\al'>\al$ and $i\in\{2, \ldots, p\}$ and define
\bes\label{18610.19h} \La_n:= \lf]\lambda_{i-1}^n+ {n^{-1+\alpha'}},\lambda_{i}^n-
 {n^{-1+\alpha'}}\ri[.\ees
Note first that if $\La_n$ is empty, then 
the eigenvalue of $\wtX$ which lies between $\la_{i-1}^n$ and $\la_i^n$ is within $n^{-1+\al'}$ to both  $\la_{i-1}^n$ and $\la_i^n$, so we have nothing to prove.
 Now, we want to prove that
 $f_n$ does not vanish on  $\La_n$
 and   that according to the sign of $\frac{1}{\theta}-
\frac{1}{\underline{\theta}}$, it  vanishes on 
one side or the other of $\La_n$ in $]\lambda_{i-1}^n,\lambda_{i}^n
 [$. This will prove (i) and (ii) of the theorem. Part (iii) can be proved in the same way, proving that with overwhelming probability, 
$f_n$ does not vanish in $\lf]\lambda_{n-i-1}^n+ {n^{-1+\alpha'}},\lambda_{n-i}^n-
 {n^{-1+\alpha'}}\ri[ $.

The proof of this fact will follow the same lines as the proof of Lemma \ref{8210.00h29} and we recall 
 that 
$P$ was defined above as  the orthogonal
projection onto the linear span of the eigenvectors of $X_n$ 
corresponding to the eigenvalues $\lambda_{m_n+1}^n,\ldots, \lambda_{n}^n$.
Then, exactly as for \eqref{eqlemma4.8}, we can show that for all $\delta>0$,
$$\sup_{z\in [\la_1^n, \la_p^n]}\lf|\frac{1}{n}\langle g, P(z-X_n)^{-1}P g\rangle -\frac{1}{\underline\theta}\ri|\le\delta$$
with overwhelming probability. Moreover,    for any $z\in \La_n$, for  any $j=1, \ldots,  m_n$, we have
$$| z-\la_j^n|\ge \min \{z-\la_{i-1}^n, \la_i^n-z\}\ge {n^{-1+\al'}},$$ so that  $$\sup_{z\in \La_n}\lf|\frac{1}{n}\langle g,(1- P)(z-X_n)^{-1}(1- P) g\rangle\ri|\le
n^{-\al'}\langle g, (1- P) g\rangle.$$ By Proposition \ref{hanson}, we deduce that 
  for any $\e >0$,  $$\sup_{z\in \La_n}\lf|\frac{1}{n}\langle g,(1- P)(z-X_n)^{-1}(1- P) g\rangle\ri| \le 
 n^\e
n^{-\al'}m_n$$
 with overwhelming probability. We  choose $\e$ in such a way that  the latter right
hand side goes to zero.
Therefore, we know that uniformly on 
$\La_n$, 
$$f_n(z)= \frac{1}{\underline\theta}-\frac{1}{\theta} +o(1)$$
 with overwhelming probability. Since for all $n$, $f_n$
is decreasing, 
 going to $+\infty$ (resp.  $-\infty$) as $z$ goes to any $\la_{i-1}^n$ on the right (resp.  $\la_i^n$ on the left), it follows that according to the sign of
 $\frac{1}{\underline\theta}-\frac{1}{\theta}$, the zero of $f_n$ in $]\la_{i-1}^n, \lambda_{i}^n [$ is either in $]\la_{i-1}^n, \lambda_{i-1}^n+ {n^{-1+\alpha'}} [$ or in 
 $]\la_{i}^n-{n^{-1+\al'}}, \lambda_{i}^n  [$.\qed

  \noindent{\it Proof of Theorem \ref{corstick}.} For each $\ell=0, \ldots, r$, let us define $$\wtX^{(\ell)}:=  X_n+\sum_{i=1}^\ell \tta_iu^n_iu_i^{n^*}
 $$ and denote its eigenvalues by ${\wtl_1}^{(\ell)}\le \cdots\cdots\le \wtl_n^{(\ell)}$. We also define 
 \beq p\ul_-&:=&\sharp\{i=1, \ldots, \ell\ste \rho_{\tta_i}<a\},\\
 p\ul_+&:=&\sharp\{i=1, \ldots, \ell\ste \rho_{\tta_i}>b\}.
\eeq 
$p\ul_-$ and $p_+\ul$ are respectively the numbers of eigenvalues of $\wtX\ul$ with limit $<a$ and $>b$. 
We also set $$f_n\ul(z):=\lan u^n_\ell, (z-\wtX^{(\ell-1)})^{-1}u^n_\ell\ran- \ff{\tta_\ell}.$$ Of course, as before, 
the eigenvalues of $\wtX\ul$ are the zeros of $f_n\ul$. 

Let us also choose $\zeta_a<a$ and $\zeta_b>b$ \st $$\zeta_a>\max\{\rho_{\tta_i}\ste \rho_{\tta_i}<a\}\quad\textrm{ and }\quad \zeta_b<\min\{\rho_{\tta_i}\ste 
\rho_{\tta_i}>b\}.$$

First, as in the proof of Theorem \ref{exact1d}, up to small perturbations, one can suppose that for all $\ell=0,\ldots, r$, the eigenvalues of $\wtX\ul$ 
are pairwise distinct and for all $\ell=1,\ldots, r$, the eigenvalues of $\wtX\ul$ are distinct from those of $\wtX^{(\ell-1)}$. 

 Now, let us state a few facts:
 
(a)   For all $\ell$, there is a constant $M$ \st  the extreme eigenvalues of $\wtX^{(\ell)}$ are in $[-M,M]$ with overwhelming probability 
(this follows from Theorem \ref{241109.17h50}).

(b) Moreover, for each $\ell$, for each $i<k$, by Weyl's interlacing inequalities, $$0\le \ff{\wtl\ul_{i+1}-\wtl\ul_{k-1}}\le\ff{\wtl_i^{(\ell-1)}-\wtl_k^{(\ell-1)}}.$$
which implies, by induction over $\ell$, that $\wtX\ul$ satisfies
the first part  Hypothesis \ref{hypspec} and      \eqref{16511} and \eqref{ecarts_vp_190110} in 
  Hypotheses   \ref{H3a}  $[p-\ell,\alpha,a]$ and $[q-\ell,\alpha,b]$. 

We only consider the i.i.d. model, so each $\wtX^{(\ell)}$ can be deduced from $\wtX^{(\ell-1)}$ by adding an independent rank one perturbation.

In the case where all the $\theta_i$'s are in $[\underline\theta,\overline\theta]$, also the extreme eigenvalues of $\wtX^{(\ell)}$ stick to the bulk and
therefore the full hypothesis \ref{hypspec} holds at each step. In this
case we can simply apply Theorem \ref{exact1d} inductively 
to prove the theorem. The appearance of spikes is in fact not a problem
as Theorem \ref{theostick} insures that for all $\ell$, the
eigenvalues of $\tilde X^{(\ell)}_n$ are close to the eigenvalues
of $X_n$ simultaneously with overwhelming probability, whereas 
Weyl's interlacing properties and as in the previous proof discussions
on the sign of the functions $f_n$ allows to localise in the neighborhood
of which eigenvalues of $X_n$ the eigenvalues of $\tilde X_n^{(\ell)}$
lie. 

 Let us detail a bit this argument.
  By using (a) and (b) above and following the proof of Lemma \ref{8210.00h29}, one can easily prove that for all $\ell=1, \ldots, r$, for any $i=p_-\ul,\ldots, p-\ell$ (resp. $j=p_+\ul,\ldots,  q-\ell$), for any $\delta>0$, for  \be\label{1851116h-1}\Omega_n:=]\max\{\wtl_{i-1}\ul\, ,\, \zeta_a\}+n^{-1+\al'}, \wtl_i\ul-n^{-1+\al'}[ \ee \be\label{1851116h-11}\textrm{(resp. }\Omega_n:=]\wtl_{n-j}\ul+n^{-1+\al'}, \min\{\wtl_{n-j+1}\ul\, ,\, \zeta_b\}-n^{-1+\al'}[\textrm{ )},\ee with overwhelming probability, \be\label{1851116h} \sup_{z\in \Omega_n} \lf|\lan u^n_\ell, (z-\wtX^{(\ell-1)})^{-1}u^n_\ell\ran-\ff{\udl{\tta}}\ri|\le \delta\ee \be\label{1851116h1}  \textrm{(resp. }\sup_{z\in \Omega_n} \lf|\lan u^n_\ell, (z-\wtX^{(\ell-1)})^{-1}u^n_\ell\ran-\ff{\ovl{\tta}}\ri|\le \delta\textrm{ ).}\ee
 
 Let us now fix $\ell\in \{1, \ldots, r\}$ and compare the eigenvalues of 
$\wtX\ul$ to the ones of $\wtX^{(\ell-1)}$. 

We suppose for example that $\tta_\ell>0$. 

Then by Weyl's inequalities,  we have $$
\wtl_1\ulmu <\wtl_1\ul<\wtl_2\ulmu<\wtl_2\ul<\cdots\cdots<\wtl_{n-1}\ulmu <\wtl_{n-1}\ul<\wtl_n\ulmu<\wtl_n\ul.$$

\begin{itemize}\item
Let us first consider the smallest eigenvalues. Under the overwhelming event  \eqref{1851116h}, $f_n\ul<0$ on any interval $\Omega_n$ 
as defined in \eqref{1851116h-1}. So, since $f_n\ul$ is decreasing and vanishes exactly once on $]\wtl_{i-1}\ulmu, \wtl_i\ulmu[$, its zero 
  $\wtl_{i-1}\ul$ is within $n^{-1+\al'}$ from $\wtl_{i-1}\ulmu$.\\  

\item Let us now consider the largest eigenvalues. Under the overwhelming event  \eqref{1851116h1}, $f_n\ul$ has the same sign as $\ff{\ovl{\tta}}-\ff{\tta_\ell}$
 on any interval $\Omega_n$ as defined in \eqref{1851116h-11}, so $\wtl_{n-j}\ul$ is within $n^{-1+\al'}$ from $\wtl_{n-j}\ulmu$ if $\tta_l<\ovl{\tta}$ and
  $\wtl_{n-j}\ul$ is within $n^{-1+\al'}$ from $\wtl_{n-j+1}\ulmu$ if $\tta_l>\ovl{\tta}$. \end{itemize}

 To conclude, up to $n^{-1+\al'}$ errors,    each perturbation by a positive rank one matrix $\tta_\ell u_\ell^n u_\ell^{n^*}$ does move the smallest eigenvalues
 and translates each largest one to the following eigenvalue if $\tta_l>\ovl{\tta}$ and does not move the largest eigenvalues if $\tta_l<\ovl{\tta}$. Of course,
 the analogue result holds for perturbations by negative rank one matrices. 
 
 The theorem follows.\qed

\section{Application to classical models of matrices}\label{secAppl}

Our goal in this section is to show that if $X_n$ belongs to some classical 
ensembles of matrices, the extreme eigenvalues of perturbations of such matrices
have their asymptotics obeying to Theorems \ref{241109.17h50}, \ref{theogauss} and 
\ref{theostick}. For that, a crucial step will be the following statement.
If $(X_n)$ is a sequence of random matrices, we say that it satisfies an hypothesis $H$
{\it in probability} if the probability that $X_n$ satisfies $H$  converges to one as $n$ goes
to infinity (for example, if $H$ states a convergence to a limit $\ell$, ``$H$ in probability" is the convergence in \pro to $\ell$).
\begin{Th} \label{theoprob}
Let $(X_n)$ be a sequence of random matrices independent of the $u_i^n$'s.
Under Assumption \ref{hyponG},
\begin{enumerate}
 \item If Hypothesis \ref{hypspec} holds in probability, Theorem \ref{241109.17h50} holds.
\item If $\kappa_4(\nu)=0$ and Hypotheses \ref{hypspec} and \ref{H2}  hold in probability,
Theorem \ref{theogauss} holds.
If $\kappa_4(\nu)\neq 0$ and Hypotheses \ref{hypspec} and \ref{H2'}  hold in probability,
Theorem \ref{942010.1} holds.
\item Under Assumption \ref{hypthetamin}, if Hypotheses \ref{hypspec} and \ref{H3a} hold in probability,
Theorem \ref{theostick} holds ``with probability converging to one'' instead of ``with overwhelming probability'';
Theorems \ref{exact1d} and Corollary \ref{corstick} hold. 
\end{enumerate}
\end{Th}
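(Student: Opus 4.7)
The plan is to exploit the independence of $X_n$ and the $u_i^n$'s by systematically conditioning on $X_n$. For each of the three parts, the estimates in the deterministic proofs (Sections \ref{sec.ps}, \ref{sec.away} and \ref{sec.sticking}) rely on concentration inequalities for the random vectors $u_i^n$ that hold uniformly over any \emph{fixed} realization of $X_n$ satisfying the relevant hypotheses, with constants that depend on $X_n$ only through quantities such as $\|(z-X_n)^{-1}\|$, $\tr((z-X_n)^{-k})$, and the location of the extreme eigenvalues. The idea is to combine these conditional bounds with the tower property
$$\Pro(E_n)=\E\lf[\Pro(E_n\mid X_n)\ri]$$
and the fact that $X_n$ satisfies the hypotheses with probability tending to one.

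For part (1), fix $\eps,\delta>0$ and define the ``good event'' $\mc{G}_n^\eps$ as the set of realizations of $X_n$ for which the spectral measure is within $\eps$ of $\mu_X$ (in the Kolmogorov distance) and the extreme eigenvalues are within $\eps$ of $a$ and $b$; by Hypothesis \ref{hypspec} in probability, $\Pro(\mc{G}_n^\eps)\to 1$. Conditionally on $\mc{G}_n^\eps$, the proof of Lemma \ref{toto} gives $\sup_{z,\,d(z,[a,b])>\delta}|M_n(z)-M(z)|_\infty\le\eps$ with overwhelming probability, and hence the conclusion of Theorem \ref{241109.17h50} holds. Integrating over $\mc{G}_n^\eps$ yields convergence in probability of the $\wtl_i^n$'s to the prescribed deterministic limits; since the limits are deterministic, one upgrades to almost sure convergence either via a Borel-Cantelli argument (using the explicit rate coming from the overwhelming probability bound) or, more conveniently, by passing to a subsequence along which Hypothesis \ref{hypspec} holds almost surely (Skorohod representation) and applying Theorem \ref{241109.17h50} pointwise.

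For part (2), the cleanest route is a subsequence argument. From any subsequence, extract a further subsequence along which Hypotheses \ref{hypspec} and \ref{H2} (resp.\ also \ref{H2'}) hold almost surely. Conditionally on this $\sigma$-algebra, the $u_i^n$'s still have their prescribed laws and are independent of $X_n$, so the deterministic Theorem \ref{theogauss} (resp.\ \ref{942010.1}) applies on a set of full measure, giving conditional convergence in distribution of the rescaled eigenvalues to the law of the eigenvalues of $(c_{\alpha_j}M_j)_{j}$. The key observation is that the limiting law depends only on $\mu_X$ (through $c_{\alpha_j}$ and $G'_{\mu_X}(\rho_{\alpha_j})$) and is therefore deterministic, so the dominated convergence theorem applied to bounded continuous test functions gives the unconditional convergence in distribution along the subsequence, and hence along the full sequence.

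For part (3), the same conditioning strategy applies: all the estimates in the proofs of Lemmas \ref{8210.1h06} and \ref{8210.00h29}, of Theorem \ref{exact1d} and of Theorem \ref{corstick}, are statements about the vectors $u_i^n$ given $X_n$, and they hold with overwhelming probability on the event that $X_n$ satisfies Hypotheses \ref{hypspec} and \ref{H3a}. Under the weaker assumption that these hypotheses hold only in probability, integrating the conditional bound yields a probability of success which converges to one but not exponentially fast — hence the unavoidable weakening from ``overwhelming probability'' to ``probability converging to one'' in the statement for Theorem \ref{theostick}. The main technical point, which is the only real obstacle, is to check that the constants $C,\eta$ appearing in the concentration estimates (Propositions \ref{hanson} and \ref{convm}) can be chosen \emph{uniformly} on the good event: this is the case because they depend on $X_n$ only through uniformly controlled quantities like $\|(z-X_n)^{-1}\|$ and $n^{-1}\tr\big(P(z-X_n)^{-k}P\big)$, which are bounded (or bounded below in the relevant direction) uniformly once Hypotheses \ref{hypspec} and \ref{H3a} are satisfied up to a fixed error. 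With this uniformity in hand, Theorems \ref{exact1d} and \ref{corstick}, whose statements already involve ``vanishes in probability'', go through without modification.
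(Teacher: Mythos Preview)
Your proposal is correct and, for part (2), is exactly the paper's argument. The paper's own proof is a single paragraph: use the subsequence principle (to show convergence to a limit $\ell$ in a metrizable sense it suffices that every subsequence has a further subsequence converging to $\ell$), extract from any subsequence a further one along which the hypotheses hold almost surely, then condition on the $\sigma$-algebra generated by the $X_n$'s and apply the deterministic theorems. Your part (2) reproduces this verbatim.

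In parts (1) and (3) you additionally develop a more explicit ``good event'' route, checking that the constants in Propositions \ref{hanson} and \ref{convm} can be taken uniformly over $\mc{G}_n^\eps$. This also works, but the paper's subsequence argument is cleaner precisely because it sidesteps that uniformity check: once the hypotheses hold almost surely along the subsequence, the deterministic theorems apply $\omega$-by-$\omega$ after conditioning, with no uniformity over realizations of $X_n$ required. One minor caveat: your Borel--Cantelli suggestion in part (1) is not quite justified as written, since the rate $\Pro((\mc{G}_n^\eps)^c)\to 0$ coming from mere convergence in probability need not be summable; the subsequence alternative you mention is the correct one, and is all the paper uses.
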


This result follows from the results with deterministic sequences of matrices $X_n.$
Indeed, to prove that a sequence converges to a limit $\ell$ in a metric space, it suffices to prove that any of its subsequences has a subsequence converging to $\ell$. If the convergences of the hypotheses  hold in probability, then from any subsequence, one can extract a subsequence for 
which they hold almost surely. Then up to a conditioning by the $\sigma$-algebra generated by the $X_n$'s, 
the hypotheses of the various theorems hold.\\

The remaining of this section is devoted to showing that such results hold
if $X_n$, independent of $(u_i^n)_{1\le i\le r}$, is a Wigner or
a Wishart matrix or a random matrix which law has density proportional to $e^{-\Tr V}$ for a certain potential $V$.
In each case, we have to check that the hypotheses hold in probability.

\subsection{Wigner matrices}
Let $\mu_1$ be a centred distribution on $\R$  (respectively on $ \C$)  and 
 $\mu_2$ be a centred distribution on $\R$,
 both having  a finite fourth moment (in the case where $\mu_1$ is not supported on the real line,
we assume that the real and imaginary part are independent).
 We define $\sigma^2=\int_{z\in \C} |z|^2\ud\mu_1(z)$. 

Let $(x_{i,j})_{ i,j\ge 1}$ be an infinite Hermitian random matrix
which entries are independent up to the condition $x_{j,i}=\ovl{x_{i,j}}$ \st the $x_{i,i}$'s are distributed according to $\mu_2$ and the $x_{i,j}$'s ($i\neq j$) are distributed according to $\mu_1$. We take $X_n=\ff{\sqrt{n}}\left[{x_{i,j}} \right]_{i,j=1}^n,$
which is said to be a {\it Wigner matrix}. 
For certain results, we will also need an additional hypothesis, which we present here: 
\begin{hyp}\label{hypwig}
The probability measures $\mu_1$ and $\mu_2$  have a {\it  sub-exponential decay}, that is there exists  positive constants $C,C'$ \st
if $X$ is distributed according to  $\mu_1$ or $\mu_2,$ for all $t\ge C'$, 
$$\Pro(|X|\ge t^C)\le    e^{-t}.$$ 
Moreover, $\mu_1$ and $\mu_2$ are symmetric.
\end{hyp}
The following Proposition generalises some results
 of   \cite{sandrinePTRF06,FP07,CDF09,CDF09b}
which study the effect of a finite rank perturbation 
on a non-Gaussian Wigner matrix. In particular, 
it includes the study of the eigenvalues 
which stick to the bulk.

\begin{propo} \label{1622010.11h29} Let $X_n$ be a Wigner matrix. Assume that Assumption \ref{hyponG} holds.
The limits of the extreme eigenvalues of $\wtX$ are given by Theorem \ref{241109.17h50}
and the fluctuations of the ones 
which limits are out of $[-2\sigma, 2\sigma]$ 
are given by Theorem \ref{theogauss}, where the parameters $a,b, \rho_{\tta}, c_\al$ are given by the following formulas :
$b=-a=2\sigma$, $$\rho_\tta :=\begin{cases}\tta+\f{\sigma^2}{\tta}&\textrm{if $|\tta|>\sigma$,}\\
2\sigma&\textrm{if  $0<\tta\le\sigma$,}\\
-2\sigma&\textrm{if   $-\sigma\le \tta<0$,}\end{cases}
$$ and $$c_\al=\begin{cases}\sqrt{\alpha^2- \sigma^2}&\textrm{in the i.i.d. model,}\\
\\
\f{\sigma\sqrt{\al^2-\sigma^2}}{\al}&\textrm{in the orthonormalized model.}\end{cases}$$

Assume moreover that, for all $i,$ $\theta_i\not\in
 \{-\sigma,\sigma\}$ and Hypothesis \ref{hypwig} holds.
If the
perturbation has rank one, we have the following precise description of the fluctuations of the sticking eigenvalues :
\begin{itemize}
 \item If  $\theta>\sigma$ (resp.  $\theta<-\sigma$),
for all $p \ge 2$,  $n^{2/3}(\wtl_{n-p+1}^n- 2\sigma)$ (resp. $n^{2/3}(\wtl_p^n+2\sigma)$) 
converges in law to the $p-1$th Tracy Widom law.
\item If $0\le \theta < \sigma$  (resp. $-\sigma < \theta \le 0$), for all $p \ge 1,$
$n^{2/3}(\wtl_{n-p+1}^n- 2\sigma)$ (resp. $n^{2/3}(\wtl_p^n+2\sigma)$) 
converges in law to the $p$th Tracy Widom law.
\end{itemize}
If the perturbation is rank more than one and Assumption \ref{hypthetamin} holds, the extreme eigenvalues of $\wtX$ are at distance 
less than $n^{-1+\e}$ for any $\e>0$ to the extreme eigenvalues of $X_n,$ which have Tracy-Widom fluctuations. We can localize exactly near which eigenvalue of $X_n$ they lie by using Theorem \ref{corstick} in the i.i.d model.
\end{propo}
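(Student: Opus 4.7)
The plan is to apply Theorem~\ref{theoprob}, which reduces the proposition to verifying, for a Wigner matrix $X_n$ independent of the perturbation vectors, that Hypothesis~\ref{hypspec} (with the parameters claimed) holds in probability, together with Hypotheses~\ref{H2}, \ref{H2'} and \ref{H3a} where appropriate, and finally to combine Theorems~\ref{exact1d} and \ref{corstick} with the classical edge universality of Wigner matrices to obtain the Tracy--Widom statements.

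Hypothesis~\ref{hypspec} and the explicit formulas are routine. Wigner's theorem gives $\mu_n\to\mu_{\mathrm{sc}}$ in probability, with $\mu_{\mathrm{sc}}$ the semicircle of variance $\sigma^2$, and the fourth moment assumption ensures convergence of $\la_1^n$ and $\la_n^n$ to $\mp 2\sigma$ (Bai--Yin). From $G_{\mu_{\mathrm{sc}}}(z)=(z-\sqrt{z^2-4\sigma^2})/(2\sigma^2)$ one reads $\overline\theta=-\underline\theta=\sigma$ and $G_{\mu_{\mathrm{sc}}}^{-1}(1/\theta)=\theta+\sigma^2/\theta$, and differentiation yields $\int(\rho_\alpha-x)^{-2}\,\ud\mu_{\mathrm{sc}}(x)=1/(\alpha^2-\sigma^2)$; plugging this into \eqref{28110.1} produces the announced values of $c_\alpha$ in both the i.i.d.\ and the orthonormalised models. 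Hypothesis~\ref{H2} is then immediate: at any fixed $z=\rho_{\alpha_i}\notin[-2\sigma,2\sigma]$ the resolvent of $X_n$ has bounded norm, so standard concentration bounds for linear statistics of Wigner matrices give $n(G_{\mu_n}(z)-G_{\mu_{\mathrm{sc}}}(z))=O_\Pro(1)$, far better than the $1/\sqrt n$ rate needed. Hypothesis~\ref{H2'}, relevant when $\kappa_4(\nu)\ne 0$, is handled by the same concentration arguments, using that the off-diagonal entries of $(z-X_n)^{-1}$ decay like $n^{-1/2}$ outside the bulk to identify the limit $l(z)$.

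The main technical obstacle is Hypothesis~\ref{H3a}. This is where the sub-exponential decay Hypothesis~\ref{hypwig} is used: under such tails, the local semicircle law of Erd\H{o}s--Schlein--Yau (or the Tao--Vu rigidity estimates) provides, with overwhelming probability, the bound $|\la_k^n-\gamma_k^n|\le n^{-2/3+\varepsilon}(\min(k,n-k+1))^{-1/3}$, where $\gamma_k^n$ is the classical semicircle location. Choosing $m_n=\lfloor n^\alpha\rfloor$ with $\alpha\in(2/3,1)$, the sums in \eqref{16511}--\eqref{norankone} are then comparable to integrals of the form $\int_{\gamma_{m_n}}^{2\sigma}(2\sigma-x)^{-k}\sqrt{4\sigma^2-x^2}\,\ud x$, whose convergence is dictated by the square-root vanishing of $\rho_{\mathrm{sc}}$ at the edges; this yields the required bounds with explicit $\eta_2,\eta_4>0$, in probability. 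The same analysis applies symmetrically at $-2\sigma$.

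For the rank one Tracy--Widom statement, Theorem~\ref{exact1d} combined with the rigidity of the previous step places $\widetilde\la_{n-p+1}^n$ within $n^{-1+\alpha'}$ of either $\la_{n-p+1}^n$ (sub-critical, $0\le\theta<\sigma$) or $\la_{n-p+2}^n$ (super-critical, $\theta>\sigma$, the index shift reflecting the eigenvalue pushed out of the bulk). Choosing $\alpha'<1/3$ makes this gap $o(n^{-2/3})$, so the Soshnikov/Tao--Vu edge universality transfers the Tracy--Widom fluctuations at scale $n^{-2/3}$ from the Wigner eigenvalues to the perturbed ones, giving the $p$-th (resp.\ $(p-1)$-th) Tracy--Widom law as claimed. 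The higher-rank statement follows identically by invoking Theorem~\ref{corstick} in place of Theorem~\ref{exact1d}.
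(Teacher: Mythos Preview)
Your overall strategy---reducing everything to Theorem~\ref{theoprob} and then verifying the hypotheses in probability---is exactly the paper's, and your treatment of Hypothesis~\ref{hypspec}, the explicit formulas, and Hypothesis~\ref{H2} is fine (the paper cites the Bai--Yao CLT rather than concentration, but either yields $n(G_{\mu_n}-G_{\mu_{\mathrm{sc}}})=O_\Pro(1)$, which is more than enough). Note incidentally that Hypothesis~\ref{H2'} is not needed here: the proposition only invokes Theorem~\ref{theogauss}, not Theorem~\ref{942010.1}.

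Your verification of Hypothesis~\ref{H3a}, however, contains a fatal inconsistency. You take $m_n=\lfloor n^\alpha\rfloor$ with $\alpha\in(2/3,1)$, and then, in the Tracy--Widom transfer step, write ``choosing $\alpha'<1/3$ makes this gap $o(n^{-2/3})$''. But Theorems~\ref{theostick}, \ref{exact1d} and \ref{corstick} all require $\alpha'>\alpha$; with your $\alpha>2/3$ the best bound they give is $|\wtl-\la|\le n^{-1+\alpha'}$ with $\alpha'>2/3$, i.e.\ at least of order $n^{-1/3}$, which swamps the $n^{-2/3}$ Tracy--Widom scale and the transfer argument collapses. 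What is actually needed---and what the paper explicitly targets---is Hypothesis~\ref{H3a}$[p,\alpha]$ with some $\alpha<1/3$.

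Your rigidity-based route to Hypothesis~\ref{H3a} is genuinely different from the paper's and \emph{can} be repaired by taking $\alpha$ small: since the rigidity error $n^{-2/3+\varepsilon}k^{-1/3}$ is negligible compared to $\gamma_k+2\sigma\sim(k/n)^{2/3}$ as soon as $k\gg n^\varepsilon$, any $\alpha\in(0,1/3)$ lets you compare the sums \eqref{ecarts_vp_190110}--\eqref{norankone} to semicircle integrals and still conclude. The paper proceeds quite differently: it establishes \eqref{16511} by first replacing $\la_p^n$ with the edge $-2\sigma$ (a nontrivial step isolated as Lemma~\ref{lemschlein}, due to Schlein, and relying on the local density bound of Erd\H{o}s--Schlein--Yau) and then controlling $\frac{1}{n}\sum_j(\la_j^n+2\sigma)^{-1}$ via a dyadic decomposition using Soshnikov--Ruzmaikina moment bounds together with Tao--Vu local statistics; \eqref{ecarts_vp_190110} and \eqref{norankone} then follow from \eqref{16511} combined with the $n^{-2/3}$ spacing of extreme eigenvalues. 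In your sketch the analogous replacement of $\la_p^n$ by $-2\sigma$ in \eqref{16511} is precisely the step you glossed over, and it deserves a line of justification even in the rigidity framework.
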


\begin{rmq}
 All the Tracy-Widom laws involved in the statement of the proposition above,
are the ones corresponding respectively to the GOE if $\mu_1$ is supported on $\R$ and 
to the GUE if $\mu_1$ is supported on $\C.$
\end{rmq}

According to Theorem \ref{theoprob}, it suffices to
 verify that the  hypotheses 
hold in probability for $(X_n)_{n\ge 1}$. We study separately 
the eigenvalues which stick to the bulk and those which deviate
from the bulk.

$\bullet${\it Deviating eigenvalues.}

If $X_n$ is a Wigner matrix (that is, with our terminology, with entries having a finite fourth moment), the fact that $X_n$
satisfies 
Hypothesis \ref{hypspec} in probability is a well known result (see for example \cite[Th. 5.2]{bai-silver-book}) for $\mu_X$ the semicircle law  
 with support $[-2\sigma, 2\sigma]$. The formulas for $\rho_\tta$ and $c_\al$ can be checked with the well known formula \cite[Sect. 2.4]{alice-greg-ofer}: \be\label{eq-trSTsc}\forall z\in\R\bck[-2\sigma, 2\sigma], \qquad G_{\mu_X}(z)=\f{z-\sgn(z)\sqrt{z^2-4\sigma^2}}{2\sigma^2}.\ee

Moreover,  \cite[Th. 1.1]{baiyao2005}
shows that  $\tr(f(X_n))-n\int f(x)d\sigma(x)$ converges in law to a Gaussian distribution for any function $f$ which is analytic in a neighborhood of $[-2\sigma, 2\sigma]$. For any fixed $z\notin [-2\sigma, 2\sigma]$, applied for $f(t)=\ff{z-t}$, we get that $n(G_{\mu_n}(z)-G_{\mu_X}(z))$ converges in law to a Gaussian distribution, hence $\sqrt{n}(G_{\mu_n}(z)-G_{\mu_X}(z))$ converges in \pro to zero, so that
Hypothesis \ref{H2} holds in probability.
 
$\bullet${\it Sticking Eigenvalues.} 

We now assume moreover that the laws of the entries satisfy Hypothesis \ref{hypwig}. In order to lighten the notation, we shall now suppose that $\si=1$. Let us first recall that by 
\cite{Sosh,ruz}, the extreme eigenvalues of the
non-perturbed matrix $X_n$, once re-centred and renormalised by $n^{2/3}$,
converge to the Tracy-Widom law (which depends on whether the entries are complex or real).
We need to verify that Hypothesis
\ref{H3a}[p,$\alpha$]  for
any finite $p$ and an $\alpha<1/3$ 
is fulfilled in probability. By \cite{Sosh},
the spacing between the two smallest eigenvalues of $X_n$ is of
order greater than $n^{-\gamma}$ for
$\gamma>2/3$ 
with probability
going to one and therefore, by the inequality 
$$ \sum_{i=m_n+1}^{n} \ff{(\la_p^{n}-\la_{i}^n)^{k}} \le
(\la_{p+1}^n-\la_p^n)^{1-k}\times\sum_{i=m_n+1}^{n} \ff{\la_i^{n}-\la_{p}^n},\qquad\textrm{($k=2$ or $4$),}$$
it is sufficient to prove the first point of Hypothesis \ref{H3a}[p,$\alpha$]. We shall prove it 
by replacing first the smallest eigenvalue by the edge 
$-2$ thanks to  a lemma that Benjamin Schlein \cite{Benj} kindly communicated 
to us.  We will
then prove that the sum of the inverse of the distance
of the eigenvalues to the edge indeed converges
to the announced limit, thanks to both Soshnikov paper \cite{Sosh} 
(for sub-Gaussian tails) or \cite{ruz} (for finite moments), 
and Tao and Vu article \cite{TV}. 
\begin{lem}[B. Schlein]\label{lemschlein}
Suppose  the entries of $X_n$  have a   uniform sub-exponential tail. 
Then for all $\delta>0$,  for all integer number $p$, 
$$\lim_{n\to\infty}\Pro\lf(\lf|\ff{n}\sum_{j=p+1}^n\ff{\la^n_j-\la_p^n}-\ff{n}\sum_{j=p+1}^n\ff{\la^n_j+2}\ri|\ge \delta\ri)=0.$$
\end{lem}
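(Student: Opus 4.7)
The starting point is the algebraic identity
\[
\ff{\la_j^n-\la_p^n}-\ff{\la_j^n+2}=\f{\la_p^n+2}{(\la_j^n-\la_p^n)(\la_j^n+2)},
\]
which, after summation, reduces the claim to showing that in probability
\[
|\la_p^n+2|\cdot\lf|\ff{n}\sum_{j=p+1}^n\f{1}{(\la_j^n-\la_p^n)(\la_j^n+2)}\ri|\longrightarrow 0.
\]
The prefactor $|\la_p^n+2|$ is of order $n^{-2/3+\e}$ with probability going to one, for any $\e>0$, by the convergence of the $p$-th smallest eigenvalue to the edge at Tracy--Widom scale under sub-exponential tails (see \cite{Sosh,ruz}). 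It remains therefore to bound the inner sum by $n^{1/3+\e'}$ with high probability.

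To control the inner sum, apply the elementary inequality $\ff{|ab|}\le \f12(a^{-2}+b^{-2})$ to reduce to bounding
\[
\ff{n}\sum_{j=p+1}^n\ff{(\la_j^n+2)^2}\qquad\textrm{and}\qquad\ff{n}\sum_{j=p+1}^n\ff{(\la_j^n-\la_p^n)^2}.
\]
Both are $O(n^{1/3+\e})$ with probability tending to one, using the edge rigidity of \cite{TV}: the eigenvalues $\la_j^n$ are within $n^{-2/3+\e}(j\wedge (n-j+1))^{-1/3}$ of their classical locations $\gamma_j$, which satisfy $\gamma_j+2\sim c(j/n)^{2/3}$ near the lower edge. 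Consequently, for all $j\ge j_0$ with $j_0$ a fixed integer, one has a deterministic lower bound $\la_j^n+2\ge c(j/n)^{2/3}$ on this event, and summing gives $\sum_{j\ge j_0}(\la_j^n+2)^{-2}=O(n^{4/3+\e})$. The remaining finitely many terms $j<j_0$ are handled separately using the fact that the density of $n^{2/3}(\la_j^n+2)$ stays bounded in the Tracy--Widom limit, so that $|\la_j^n+2|\ge n^{-2/3-\e}$ with probability $1-O(n^{-\e})$; this is also the reason why the conclusion is stated only in probability. The analogous bound for the shifted sum follows from $\la_j^n-\la_p^n\ge c(j^{2/3}-p^{2/3})n^{-2/3}$, itself a consequence of rigidity applied to both $\la_j^n$ and $\la_p^n$.

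Combining these estimates, on an event of probability tending to one,
\[
\lf|\ff{n}\sum_{j=p+1}^n\ff{\la_j^n-\la_p^n}-\ff{n}\sum_{j=p+1}^n\ff{\la_j^n+2}\ri|\le Cn^{-2/3+\e}\cdot n^{1/3+\e}=Cn^{-1/3+2\e},
\]
which tends to zero for $\e<1/6$. The main obstacle is the rigidity estimate needed to justify the bound on the inner sum uniformly in $j$; this is where the sub-exponential tail assumption enters crucially, via the results of \cite{Sosh,ruz,TV}. Once these are taken for granted, the rest of the proof is a straightforward combination of the identity above with the two scaling bounds.
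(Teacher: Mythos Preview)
Your approach—separating the product via $|ab|^{-1}\le\tfrac12(a^{-2}+b^{-2})$ and bounding each square-sum by rigidity—differs from the paper's, which keeps the product $(\la_j^n-\la_p^n)(\la_j^n+2)$ intact and controls it through the local density bound of \cite{ESY08} in expectation, followed by Markov's inequality and a three-parameter ($K_1,K_2,K_3$) truncation at the edge. There is, however, a genuine gap in your argument. You claim that rigidity in the form $|\la_j^n-\gamma_j|\le n^{-2/3+\e}j^{-1/3}$ yields $\la_j^n+2\ge c(j/n)^{2/3}$ for all $j\ge j_0$ with $j_0$ a \emph{fixed} integer. This is false: since $\gamma_j+2\sim cj^{2/3}n^{-2/3}$, the rigidity error $n^{-2/3+\e}j^{-1/3}$ dominates the classical location whenever $j=o(n^{\e})$, so the lower bound you want only holds for $j\ge Cn^{\e}$. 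This leaves a window of $\sim n^{\e}$ indices uncovered—not ``finitely many''—and the Tracy--Widom argument you invoke is valid for each fixed $j$, not uniformly over a growing set. The same issue affects $\sum(\la_j^n-\la_p^n)^{-2}$. (A minor point: the rigidity estimate you quote is due to Erd\H{o}s--Yau--Yin, not \cite{TV}.)

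The gap is repairable with a three-range split rather than two: for $j\ge Cn^{\e}$ use rigidity as you do; for $p<j\le K$ (fixed large) use joint Tracy--Widom convergence; for the intermediate range $K<j<Cn^{\e}$, monotonicity gives $\la_j^n+2\ge\la_K^n+2\ge c_K n^{-2/3}$ with high probability, once $K$ is chosen large enough that the $K$-th rescaled edge eigenvalue is positive with high probability, so these $\sim n^{\e}$ terms contribute at most $c_K^{-2}n^{4/3+\e}$ in total. The paper's route sidesteps this complication by never separating the two factors: on the event $\{|\la_p^n+2|<K_1n^{-2/3}\}$, both $\la_j^n-\la_p^n$ and $\la_j^n+2$ are bounded below by the same quantity as soon as $\la_j^n+2>2K_2n^{-2/3}$, and the remaining eigenvalues (at most $K_3$ of them, by Tracy--Widom) are then handled directly.
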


\begin{pr}
We write $$\ff{n}
\sum_{j=p+1}^n\ff{\la^n_j-\la_p^n}-\ff{n}\sum_{j=p+1}^n\ff{\la^n_j+2}=\f{\la_p^n+2}{n}\sum_{j=p+1}^n
\ff{(\la^n_j-\la_p^n)(\la^n_j+2)}.$$
Hence for any $K_1>0$, 
\bes\hskip-7cm\Pro\lf(\lf|\ff{n}\sum_{j=p+1}^n\ff{\la^n_j-\la_p^n}-
\ff{n}\sum_{j=p+1}^n\ff{\la^n_j+2}\ri|\ge \delta\ri)\ees \beqy\nonumber
&\le& \Pro(|\la_p^n+2|\ge K_1n^{-2/3})\\
\label{ben.1}&&+\Pro\lf( \f{K_1}{n^{5/3}}\sum_{j=p+1}^n\ff{|(\la^n_j-\la_p^n)(\la^n_j+2)|}\ge \delta\textrm{ and } |\la_p^n+2|< K_1n^{-2/3}\ri).\eeqy
Now, for any $K_2>K_1$, on the event  $\{|\la_p^n+2|< K_1n^{-2/3}\}$,
  for any $\kappa>0$, we have
 \beqy\nonumber \f{K_1}{n^{5/3}}
\sum_{j=p+1}^n\ff{|(\la^n_j-\la_p^n)(\la^n_j+2)|}&\le &  \f{K_1}{n^{5/3}}\sum_{\ell=0}^{+\infty}
\f{\mc{N}_n[2K_2n^{-2/3}+\ell n^{-\kappa},2K_2n^{-2/3}+(\ell+1) n^{-\kappa}
]}{(K_2n^{-2/3}+\ell n^{-\kappa})^2}\\
\label{ben.2}&&+ \f{K_1}{n^{5/3}}\sum_{j=p+1}^n
\f{\one_{\la_j+2\le 2K_2n^{-2/3}}}{|(\la^n_j-\la_p^n)(\la^n_j+2)|},
\eeqy
where $\mc{N}_n[a,b]:=\sharp\{i\ste -2+a\le \la_i^n\le -2+ b\}$.
 Note that, from the upper bound on the density of eigenvalues in microscopic intervals, due to \cite[Theorem 4.6]{ESY08},
we know that for any $\kappa<1$,
there is a  constant $M$ independent of $n$ 
so that for all $\ell\ge 1$
  \be\label{density-eig-microscopic-intervals}\E(\mc{N}_n[2K_2n^{-2/3}+\ell n^{-\kappa},2K_2n^{-2/3}+(\ell +1)n^{-\kappa}])\le M n^{1-\kappa}.\ee 
Let us fix $\kappa \in (\frac{2}{3},1)$. 
It follows that the first term of the r.h.s. of \eqref{ben.2} can be estimated by 
\bes\hskip-2cm\Pro\lf( \f{K_1}{n^{5/3}}\sum_{\ell=0}^{+\infty}
\f{\mc{N}_n[2K_2n^{-2/3}+\ell n^{-\kappa},2K_2n^{-2/3}+(\ell+1)n^{-\kappa}
]}{(K_2n^{-2/3}+\ell n^{-\kappa})^2}\ge \f{\delta}{2}\ri)\ees
\beqy\nonumber &\le& \f{2K_1}{\delta n^{5/3}}\sum_{\ell=0}^{+\infty}
\f{\E(\mc{N}_n[2K_2n^{-2/3}+\ell n^{-\kappa},2K_2n^{-2/3}+(\ell
+1)n^{-\kappa}])}{(K_2n^{-2/3}+\ell n^{-\kappa})^2}\\
\nonumber&\le& \f{2MK_1}{\delta n^{2/3}}\frac{1}{n^{\kappa}}
\sum_{\ell=0}^{+\infty}
\f{1}{(K_2n^{-2/3}+\ell n^{-\kappa})^2}
\\
\nonumber&\le &  \f{2MK_1}{\delta n^{2/3}}\frac{1}{n^{\kappa}(K_2n^{-2/3})^2}+
\f{2MK_1}{\delta  n^{\frac{2}{3} }}\int_{0}^{+\infty}\f{\ud t}{(t+K_2n^{-\frac 2 3})^2}\\
\label{ben.3}&\le &\f{2MK_1}{\delta K_2^2n^{\kappa-2/3}}+ \f{2MK_1}{\delta K_2  }.
\eeqy
Let us now estimate   the  second term of the r.h.s. of \eqref{ben.2}.
For any positive integer $K_3$, we have 
\bes\hskip-7cm\Pro\lf( \f{K_1}{n^{5/3}}\sum_{j=p+1}^n
\f{\one_{|\la_j^n+2|\le 2K_2n^{-2/3}}}{|(\la^n_j-\la_p^n)(\la^n_j+2)|}\ge \f{\delta}{2}\ri)\ees
\beqy\nonumber &\le& \Pro\lf(\mc{N}_n(-\infty, 2K_2n^{-2/3}] \ge K_3\ri)+\Pro\lf(\f{K_1K_3}{n^{5/3}} \ff{\min_{p+1\le j\le K_3}|(\la^n_j-\la_p^n)(\la^n_j+2)|}\ge \f{\delta}{2}\ri)\\
\nonumber&\le & \Pro\lf(\la_{K_3}^n\le -2+2K_2n^{-2/3} \ri)+
\Pro\lf( \min_{p\le j\le K_3}| \la^n_j+2|\le \f{\sqrt{2K_1K_3}n^{-5/6} }{ \sqrt{\delta} }\ri)\\ \label{ben.4}&&+\Pro\lf(|\la^n_p-\la_{p+1}^n|\le \f{\sqrt{2K_1K_3}n^{-5/6}}{\sqrt{\delta}} \ri)
\eeqy
From \eqref{ben.1}, \eqref{ben.2}, \eqref{ben.3} and \eqref{ben.4}, we conclude that  \bes\hskip-7cm\Pro\lf(\lf|\ff{n}\sum_{j=p+1}^n\ff{\la^n_j-\la_1^n}-\ff{n}\sum_{j=p+1}^n\ff{\la^n_j+2}\ri|\ge \delta\ri)\ees \beq 
&\le& \Pro(|\la_1^n+2|\ge K_1n^{-2/3})+\f{2MK_1}{\delta K_2  }
 + \Pro\lf(\la_{K_3}\le -2+2K_2n^{-2/3} \ri)\\ &&+\Pro\lf( \min_{1\le j\le K_3}| \la^n_j+2|\le \f{\sqrt{2K_1K_3}n^{-5/6} }{ \sqrt{\delta} }\ri) +\f{2MK_1}{\delta K_2  }+\Pro\lf(|\la^n_2-\la_1^n|\le \f{\sqrt{2K_1K_3}n^{-5/6}}{\sqrt{\delta}} \ri)\eeq
for arbitrary $0<K_1<K_3$ and $K_3\ge 1$. Taking the limit $n\to \infty$, the last two terms disappear, because by \cite[Th. 1.16]{TV}, the distribution of the smallest $K_3$ eigenvalues lives on scales of order $n^{-2/3}\gg n^{-5/6}$. Therefore, 
\begin{multline*}
\lim_{n\to\infty}\Pro\lf(\lf|\ff{n}\sum_{j=2}^n\ff{\la^n_j-\la_1^n}-\ff{n}\sum_{j=2}^n\ff{\la^n_j+2}\ri|\ge \delta\ri)\\ 
\le \lim_{n\to\infty}\Pro(|\la_1^n+2|\ge K_1n^{-2/3})+\f{2MK_1}{\delta K_2  }
 +\lim_{n\to\infty} \Pro\lf(\la_{K_3}\le -2+2K_2n^{-2/3} \ri),
\end{multline*}
still for any $0<K_1<K_3$ and $K_3\ge 1$.
 Now, note that for $K_1$ large enough, the first term can be made as small as we want.  Then, keeping $K_1$ fixed, $K_2$ can be chosen in such a way to make the second term as small as we want too. At last, keeping  $K_2$ fixed, one can choose $K_3$ large enough to make the third term as small as we want
(as can be computed 
since the limit is given by the $K_3$ correlation function of
the Airy kernel).   \end{pr}

To complete the proof of Hypothesis \ref{H3a},
we therefore need to show that

\begin{lem} Assume that the entries of $X_n$ satisfy Hypothesis \ref{hypwig}. Then,
for any $\delta>0$, any finite integer number $p$,
$$\lim_{n\ra\infty}
\Pro\left( \left|\frac{1}{n}\sum_{j=p+1}^n \frac{1}{\lambda_j^n+2}-1\right|>\delta\right)=0$$
\end{lem}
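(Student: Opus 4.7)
The value $1$ to be recovered is $\int (x+2)^{-1}\,d\mu_X(x)$, where $\mu_X$ is the standard semicircle law: setting $z=-2$ in \eqref{eq-trSTsc} (approached from below) gives $G_{\mu_X}(-2)=-1$, i.e.\ $\int(x+2)^{-1}d\mu_X(x)=1$. The integrand is unbounded on $[-2,2]$ but integrable against $\mu_X$ because the density $\frac{1}{2\pi}\sqrt{4-x^2}$ vanishes like $\sqrt{x+2}$ at the left edge. Hence the strategy is a truncation argument: handle the bulk by Wigner's theorem, and the edge contribution by local-density and edge-rigidity estimates of the type already invoked in the proof of Lemma \ref{lemschlein}.

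Fix $\eta>0$ and split
\[
\frac{1}{n}\sum_{j=p+1}^{n}\frac{1}{\la_j^n+2} \;=\; S_n^{\mathrm b}(\eta)+S_n^{\mathrm e}(\eta),
\]
where $S_n^{\mathrm b}(\eta)$ collects the indices with $\la_j^n\ge -2+\eta$ and $S_n^{\mathrm e}(\eta)$ the remaining ones. For $n$ large the constraint $j\ge p+1$ is automatic in $S_n^{\mathrm b}(\eta)$ because $\la_{p+1}^n\to -2$ in probability. The function $x\mapsto \one_{x\ge -2+\eta}\,(x+2)^{-1}$ is bounded, so Wigner's theorem (which we have in probability under Hypothesis~\ref{hypwig}) yields
\[
S_n^{\mathrm b}(\eta)\;\xrightarrow[n\to\infty]{\Pro}\;\int_{-2+\eta}^{2}\frac{d\mu_X(x)}{x+2}\;\xrightarrow[\eta\to 0^+]{}\;1
\]
by monotone convergence.

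It remains to show that $\limsup_{\eta\downarrow 0}\limsup_{n\to\infty}\Pro(|S_n^{\mathrm e}(\eta)|>\delta)=0$. I would decompose this term dyadically, setting $I_\ell:=[-2+2^{-\ell-1}\eta,\,-2+2^{-\ell}\eta]$ for $0\le\ell\le\ell_0$ where $2^{-\ell_0}\eta\sim n^{-2/3+\eps_0}$ is a mesoscopic cut-off. On each $I_\ell$ the contribution is at most $\frac{2^{\ell+1}}{n\eta}\,\mathcal N_n(I_\ell)$, and the local semicircle-density bound \cite[Theorem 4.6]{ESY08}—already used to derive \eqref{density-eig-microscopic-intervals}—combined with the edge behaviour $\mu_X(I_\ell)\asymp(2^{-\ell}\eta)^{3/2}$ gives $\E[\mathcal N_n(I_\ell)]=O(n(2^{-\ell}\eta)^{3/2})$. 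Summing the resulting geometric series in $\ell$ yields an expected contribution of order $\sqrt{\eta}$, which vanishes with $\eta$. The extreme region $[-2-n^{-2/3+\eps_0},\,-2+n^{-2/3+\eps_0}]$ contains (with probability tending to one) only a bounded number of eigenvalues, all at distance of order $n^{-2/3}$ from $-2$ by Tao--Vu's \cite[Theorem 1.16]{TV}; their total contribution is therefore $O(n^{-1+2/3-\eps_0})=o(1)$. Putting Markov's inequality together with these two estimates controls $S_n^{\mathrm e}(\eta)$ in probability.

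The main obstacle is this last step: a single eigenvalue too close to $-2$, or worse, below $-2$ so that $(\la_j^n+2)^{-1}$ becomes large and negative, would ruin the bound. Thus the delicate point is to confine all small-index eigenvalues to a window of size $n^{-2/3}$ around the edge with probability $\to 1$, and to match the dyadic cut-off $2^{-\ell_0}\eta$ to this window. Both controls are supplied by the Soshnikov/Ruzmaikina edge results \cite{Sosh,ruz} (giving the right-hand-side tail $\la_j^n+2\gtrsim n^{-2/3}$) and by \cite[Theorem 1.16]{TV} (controlling the joint law of the finitely many extreme eigenvalues), which are precisely the ingredients Hypothesis \ref{hypwig} has been tailored to guarantee.
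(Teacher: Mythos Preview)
Your overall strategy—split at a threshold $\eta$, handle the bulk by weak convergence of $\mu_n$, and control the edge contribution by local eigenvalue-counting plus edge universality—is exactly the paper's. The gap is in the edge-counting input. The result you cite, \cite[Theorem 4.6]{ESY08}, is a Wegner-type estimate; as actually used in the paper (see \eqref{density-eig-microscopic-intervals}) it yields only the \emph{uniform} bound $\E[\mc N_n(I)]\le Mn|I|$, with no extra smallness at the edge. Plugging this into your dyadic sum gives a constant per scale,
\[
\sum_{\ell=0}^{\ell_0}\frac{2^{\ell+1}}{n\eta}\cdot Mn\,2^{-\ell-1}\eta \;=\; M\ell_0 \;\asymp\; \log n,
\]
so the argument diverges. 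To get $\E[\mc N_n(I_\ell)]=O\bigl(n(2^{-\ell}\eta)^{3/2}\bigr)$ you need an edge-sharp input, and this is precisely what the paper supplies by a different route: the Sinai--Soshnikov high-moment bound from \cite{ruz}, namely $\sum_i(\la_i^n/2)^{s_n}\le Mn/s_n^{3/2}$ for $s_n=o(n^{2/3})$, which after Chebyshev gives $\mc N_n(-\infty,-2+t]\lesssim nt^{3/2}$ down to scales just above $n^{-2/3}$. The paper in fact uses a geometric scale $n^{-\rho^k}$ with $\rho\in(2/3,\sqrt{2/3})$ (so that $s_n=n^{\rho^{k+1}}=o(n^{2/3})$ at each step) for the very-near-edge piece $A_n$, and Tao--Vu's counting \cite[Theorem 1.10]{TV} for the mesoscopic piece $B_n$.

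A secondary imprecision: the window $[-2-n^{-2/3+\eps_0},-2+n^{-2/3+\eps_0}]$ does \emph{not} contain $O(1)$ eigenvalues—heuristically it contains $\sim n^{3\eps_0/2}$ of them—so ``bounded number'' plus \cite[Theorem 1.16]{TV} is not quite the right bookkeeping there. The paper sidesteps this by first observing that the value of $p$ is irrelevant (the first few terms are $O(n^{-1/3})$ once $|\la_j^n+2|\gtrsim n^{-2/3}$), then choosing $p$ large so that $\la_p^n\ge -2+n^{-2/3}$ with high probability, which anchors the geometric decomposition from the start.
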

{\it Proof.}
Notice that by \cite{Sosh,ruz} we know that
the $p$ smallest eigenvalues of
$X_n$ converge in law towards the Tracy-Widom law, 
so that  
$$\lim_{\e\downarrow 0} \lim_{n\ra\infty}\Pro\left(\min_{1\le j\le p}|\lambda^n_j+2|<\e n^{-2/3}\right)=0.$$
Thus, for any finite $p$, with large probability,
$$\frac{1}{n}\sum_{j=2}^p \frac{1}{|\lambda^n_j+2|}\le p\e^{-1} n^{-\frac{1}{3}}$$
and therefore it
is enough to prove the lemma for any particular $p$.
As in the previous proof, we choose $p$ large enough so that
$\lambda_p^n\ge -2+n^{-\frac{2}{3}}$
with probability greater than $1-\delta(p)$
with $\delta(p)$ going to zero as $p$ goes to infinity.
 We shall prove that with high probability
\begin{equation}\label{cvb}
\lim_{\gamma\downarrow 0}\lim_{n\ra\infty}\frac{1}{n}
\sum_{j=p}^{[\gamma n]} \frac{1}{\lambda_j^n+2}\le 0.\end{equation}
This is enough to prove the statement as for any $\gamma>0$,
 $2+\lambda^n_{[n\gamma]}$ converges to $\delta(\gamma)>0$
so that $\mu_{X}([{\delta(\gamma)},2])=1-\gamma$, see \cite[Theorem 1.3]{TVLocalization},

$$\lim_{n\ra\infty}\frac{1}{n}\sum_{i=[n\gamma]}^n\frac{1}{\lambda_i^n+2}
=\int_{\delta(\gamma)}^2\frac{1}{2+x}d\mu_X(x),$$
which converges as $\gamma$ goes to zero to $\int (2+x)^{-1} d\mu_X(x)=1$ (by e.g. \eqref{eq-trSTsc}).
To prove \eqref{cvb}, we choose $\rho \in (2/3, \sqrt{2/3})$ and 
 write, on the event $\lambda_j^n+2\ge \lambda_p^n+2\ge n^{-\frac{2}{3}}\ge
n^{-\rho}$ for $j\ge p$, 
$$\frac{1}{n}\sum_{j=p}^{[\gamma n]} \frac{1}{\lambda_j^n+2} \le 
\sum_{1\le k\le K}  n^{ \rho^k-1}
\mc{N}_n[n^{-\rho^{k}}, n^{-\rho^{k+1}}] +\sum_{j=2}^{[\gamma n]} \frac{1_{\lambda^n_j\ge -2+n^{-\rho^{K+1}}}}
{n(\lambda_j^n+2)}=:A_n+B_n.$$
For the first term, we use Sinai-Soshnikov bound, which under the weakest hypothesis are given in \cite[Theorem 2.1]{ruz}. It  implies that with probability
going to one with $M$ going to infinity,
 for $s_n=o(n^{2/3})$ going to infinity, 
$$\sum_{i=1}^n \left(\frac{\lambda_i^n}{2}\right)^{s_n}\le M \frac{n }{s_n^{\frac{3}{2}}}.
$$
This implies, by Tchebychev's inequality and taking $s_n=n^{+\rho^{k+1}}$ that
$$\mc{N}_n[n^{-\rho^{k}}, n^{-\rho^{k+1}}]\le \sharp\left\{i:\left|\frac{\lambda_i}{2}\right|\ge 1-n^{-\rho^{k+1}}\right\}\le 
(1-n^{-\rho^{k+1}})^{-s_n}
\sum_{i=1}^n \left|\frac{\lambda_i^n}{2}\right|^{s_n}\le 
eM  n^{1-\frac{3}{2} \rho^{k+1}}.$$
Consequently we deduce that
$$A_n\le  eM\sum_{1\le k\le K}  n^{ \rho^k}n^{-\frac{3}{2} \rho^{k+1}}\le C n^{-\rho^K(\frac{3}{2}\rho-1)}$$
which goes to zero as $\rho>2/3$. 
For the second term $B_n$,
note that by \cite[Theorem 1.10]{TV}, for any $\e>0$ small enough,
$$\left|\mc{N}_n[ n^{-\e}\ell, n^{-\e}(\ell+1)]
-n\mu_X([-2+ n^{-\e}\ell,-2+ n^{-\e}(\ell+1)])\right|\le n^{1-\delta(\e)}$$
with $\delta(\e)=\frac{2\e-1}{10}$.
Hence, since $\mu_X([-2+ n^{-\e}\ell,-2+ n^{-\e}(\ell+1)])\sim
n^{-\frac{3\e}{2}}\sqrt{\ell}$, we deduce for $\e$ small enough that
for all $\ell\ge 1$,
$$
\mc{N}_n[ n^{-\e}\ell, n^{-\e}(\ell+1)]\le 2n^{1-\frac{3\e}{2}}\sqrt{\ell}.$$
This allows to bound $B_n$ by
$$B_n\le 2\sum_{\ell=1}^{[\gamma n^\e]} \frac{n^\e}{\ell} n^{-\frac{3\e}{2}}\sqrt{\ell}
\le 2\int_0^\gamma\frac{1}{\sqrt{x}} dx= 2\sqrt{\gamma}$$
which goes to zero as $n$ goes to infinity and then $\gamma$ goes to zero. \qed

\subsection{Coulomb Gases}
We can also consider 
random matrices $X_n$  which law is invariant under the action
of the unitary or the orthogonal
group and with eigenvalues with law
given by
\begin{equation}\label{cgas}dP_n(\lambda_1,\ldots,\lambda_n)=\frac{1}{Z_n} |\Delta(\lambda)|^\beta
e^{-n \beta \sum_{i=1}^n V(\lambda_i)} \prod_{i=1}^n d\lambda_i\end{equation}
with a   polynomial  function $V$ of even degree and positive leading coefficient and $\beta=1,2$ or $4$.  
We assume moreover that $V$ is such that the limiting spectral measure $\mu_V$ of $(X_n)$
is connected and compact and that its smallest and largest eigenvalues converge to the boundaries of the support. 
This set of hypotheses is often referred to as the ``one-cut assumption''. It holds in particular if $V$
is strictly convex and this includes the classical
Gaussian ensembles GOE and GUE (with $V(x)= x^2/4$
and $\beta=1,2$).
\begin{propo} \label{1622010.11h29b}
Under the above hypothesis on $V,$ the  extreme eigenvalues 
of $X_n$ converge to the boundary of the support.
The convergence of the extreme  eigenvalues of $\wtX$ is given by Theorem \ref{241109.17h50}.
These eigenvalues  have Gaussian fluctuations as stated in Theorem \ref{theogauss} if they deviate away from the bulk.\\
Suppose moreover that Assumption \ref{hypthetamin} holds.\\
 If the perturbation is
of rank one  and is  strong 
enough so that the largest eigenvalues deviates  from the
bulk, for all $k \ge 2,$ the  rescaled $k$th largest eigenvalue $n^{\frac{2}{3}}
(\widetilde \lambda_{n-k+1}^n-b_V)$    converges weakly towards
the $k-1$-th Tracy Widom  law. If the perturbation is
of rank one  and is  weak 
enough, for all $k \ge 1,$ the  rescaled $k$th largest eigenvalue $n^{\frac{2}{3}}
(\widetilde \lambda_{n-k+1}^n-b_V)$     converges weakly towards
the $k$-th Tracy Widom  law. \\
If the perturbation is of rank more than one, the extreme eigenvalues of $\wtX$
sticking to the bulk are at distance less than $n^{-1+\e}$ for any $\e>0$
from the eigenvalues of $X_n.$  In the i.i.d model, Theorem \ref{corstick}
prescribes exactly in the neighborhood of
which eigenvalues of $X_n$ each of them lie.
\end{propo}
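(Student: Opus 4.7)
By Theorem \ref{theoprob}, it is enough to verify that Hypotheses \ref{hypspec}, \ref{H2} and \ref{H3a}$[p,\alpha]$ hold in probability for the Coulomb gas $X_n$. The convergence of the empirical measure to the equilibrium measure $\mu_V$ and, under the one-cut assumption, the convergence of $\lambda_1^n$ and $\lambda_n^n$ to the endpoints $a_V,b_V$ of $\operatorname{supp}\mu_V$ are classical (Boutet de Monvel--Pastur--Shcherbina, Ben Arous--Guionnet); this yields Hypothesis \ref{hypspec} in probability and, by the first point of Theorem \ref{theoprob}, the analogue of Theorem \ref{241109.17h50} for the perturbed model.

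For Hypothesis \ref{H2} we invoke the central limit theorem for linear statistics of invariant ensembles (Johansson for $\beta=2$, extended to $\beta=1,4$ by Shcherbina and Borot--Guionnet under the one-cut polynomial hypothesis): for any function $f$ analytic in a neighborhood of $\operatorname{supp}\mu_V$, the fluctuations $\sum_i f(\lambda_i^n) - n\int f\,d\mu_V$ are of order one and converge in law to a Gaussian variable. Applying this to $f(x)=(z-x)^{-1}$ for each of the finitely many points $z\in\{\rho_{\alpha_1},\dots,\rho_{\alpha_q}\}$, which lie outside the support, gives $\sqrt{n}(G_{\mu_n}(z)-G_{\mu_V}(z))\to 0$ in probability, hence Hypothesis \ref{H2} in probability. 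When $\kappa_4(\nu)\neq 0$ one checks Hypothesis \ref{H2'} in the same fashion, using that the unitary/orthogonal invariance makes the diagonal entries of $(z-X_n)^{-1}$ asymptotically equal. Theorem \ref{theoprob} (2) then delivers the Gaussian fluctuations for eigenvalues which leave the bulk.

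The substantial work is verifying Hypothesis \ref{H3a}$[p,\alpha]$ in probability for any fixed $p$ and some $\alpha<1/3$; this is done exactly as in the Wigner proof given above. First, edge rigidity for $\beta$-ensembles (Bourgade--Erd\H{o}s--Yau, or Deift--Gioev for $\beta=1,2,4$) gives $|\lambda_p^n-b_V|=O(n^{-2/3+\varepsilon})$, and the minimal gap between the first $K$ eigenvalues is at least $n^{-\gamma}$ for some $\gamma$ strictly less than $1$, with probability tending to one; this reduces \eqref{ecarts_vp_190110} and \eqref{norankone} to \eqref{16511} as in the Wigner case. Second, one proves the analogue of Schlein's Lemma \ref{lemschlein} with $b_V$ in place of $-2$: the argument carries over verbatim once one has (i) an upper bound of the form \eqref{density-eig-microscopic-intervals} on $\mathcal{N}_n[b_V-n^{-\kappa}(\ell+1),b_V-n^{-\kappa}\ell]$, which is a consequence of the local law at the edge, and (ii) Tracy--Widom convergence of the top $K_3$ rescaled eigenvalues to the Airy$_\beta$ process (Ram\'\i{}rez--Rider--Vir\'ag, Deift et al.). Finally one shows $\frac{1}{n}\sum_{j\ge p+1}(b_V-\lambda_j^n)^{-1}\to G_{\mu_V}(b_V)=1/\overline\theta$ in probability by the same mesoscopic splitting argument: Sinai--Soshnikov-type bounds for the very edge and rigidity plus the integrability of $(b_V-x)^{-1}$ against the square-root density of $\mu_V$ for the bulk contribution.

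Once Hypothesis \ref{H3a} is established, the second part of Theorem \ref{theoprob} produces the sticking statements: for perturbations of rank $\ge 2$ the extreme eigenvalues of $\widetilde X_n$ are within $n^{-1+\varepsilon}$ of those of $X_n$, which themselves fluctuate according to Tracy--Widom, and Theorem \ref{corstick} localizes which ones in the i.i.d.\ model. In the rank-one case, Theorem \ref{exact1d} combined with Weyl's interlacement gives the Tracy--Widom law of order $k$ when $\theta\in(0,\overline\theta)$ and the \emph{shifted} law of order $k-1$ when $\theta>\overline\theta$ (since the top eigenvalue has then been pushed outside and the $k$th eigenvalue of $\widetilde X_n$ sticks to the $(k-1)$th eigenvalue of $X_n$). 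The principal obstacle is the rigidity/local law input at the edge, which is model-dependent: it is standard for $\beta=2$ via Plancherel--Rotach asymptotics of the orthogonal polynomials and by now known for $\beta=1,4$ under the one-cut polynomial hypothesis, but checking that the existing statements in the literature are sharp enough to yield both \eqref{16511} in probability and the analogue of Lemma \ref{lemschlein} is the delicate step.
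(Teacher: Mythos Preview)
Your overall scaffolding is correct and matches the paper's: reduce to checking Hypotheses \ref{hypspec}, \ref{H2} and \ref{H3a} in probability, then invoke Theorem \ref{theoprob}. Your treatment of \ref{hypspec} and \ref{H2} via the one-cut convergence results and the Johansson/Shcherbina CLT for linear statistics is essentially what the paper does.

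Where you diverge substantially is in the verification of \eqref{16511} in Hypothesis \ref{H3a}. You propose to rerun the Wigner argument wholesale: an analogue of Schlein's Lemma \ref{lemschlein}, a mesoscopic splitting, Sinai--Soshnikov-type moment bounds, and edge rigidity/local law input. As you yourself note, assembling sharp enough versions of these ingredients for general $\beta$-ensembles is the ``delicate step'', and it makes the proof depend on rather heavy external machinery. The paper instead exploits the explicit joint density \eqref{cgas} and obtains \eqref{16511} almost for free by integration by parts: writing
\[
\E\Big[\beta\sum_{i\neq p}\frac{1}{\lambda_i^n-\lambda_p^n}\Big]
= -\frac{1}{Z_n^\beta}\int_{\Delta_n} e^{-n\beta\sum V(\lambda_i)}\,\partial_{\lambda_p}\!\Big(\prod_{i<j}(\lambda_i-\lambda_j)^\beta\Big)\,d\lambda
= -n\beta\,\E\big[V'(\lambda_p^n)\big],
\]
so that $\frac{1}{n}\E\big[\sum_{i\neq p}(\lambda_i^n-\lambda_p^n)^{-1}\big]\to -V'(a_V)=G_{\mu_V}(a_V)=1/\underline\theta$. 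Since the same sum is deterministically bounded below by $G_{\mu_n}(a_V-\varepsilon)$-type quantities converging to the same limit, convergence in expectation plus this one-sided bound give convergence in probability directly. No local law, no Schlein lemma, no Sinai--Soshnikov input is needed; only the edge Tracy--Widom result of \cite{shcherbina} is used, and that just to control the gap $|\lambda_{p+1}^n-\lambda_p^n|$ so that \eqref{ecarts_vp_190110} and \eqref{norankone} reduce to \eqref{16511}. This is the main idea you missed: the log-gas structure gives the key estimate by a one-line computation that has no counterpart in the Wigner case.
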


\begin{pr} As explained above, it suffices to
 verify that the  hypotheses 
hold in probability for $(X_n)_{n\ge 1}$.

Note that the convergence of the spectral measure, of
the edges and the fluctuations of the extreme eigenvalues 
were obtained in \cite{shcherbina}.
The fact that $\sqrt{n}(G_{\mu_n}(z)-G_{\operatorname{sc}}(z))$ converges in \pro to zero is a consequence of  \cite{jojo}  so that
Hypothesis \ref{H2} holds. 

We next check  Hypothesis \ref{H3a}{\rm [p,$\alpha$]}  for
the matrix model
$P_n.$ We shall prove it for any $\alpha > 1/3$  and
any integer $p$.
 We first show that
\be\label{1622010.1}\lim_{n\ra\infty}\E\left[\frac{1}{n}\sum_{i\neq p}\frac{1}{\lambda^n_i-\lambda^n_p}\right]
=-V'(a_V)\,.\ee
Indeed, the joint distribution of $(\la_1^n, \ldots, \la_n^n)$ is $$\ff{Z_n^\beta} e^{-n\sum_{i=1}V(\lambda_i)}\prod_{1\le i<j\le n}^n (\lambda_i-\lambda_j)^\beta \one_{\Delta_n} \ud\lambda_1\cdots\ud\la_n,$$
with $\beta=1,2$ or $4$, $Z_n^\beta$ is the normalising constant and
$\Delta_n=\{\lambda_1 < \cdots < \lambda_n\}$.\\
Therefore,
\begin{eqnarray*}
\E\left[{\beta}\sum_{i\neq p}\frac{1}{\lambda^n_i-\lambda^n_{p}
}\right] & = &-
\ff{Z_n^\beta} \int_{\Delta_n}  e^{-n\beta\sum_{i=1}^nV(\lambda_i)}
 \frac{\partial}{\partial \lambda_p}\prod_{1\le i<j\le n}^n (\lambda_i-\lambda_j)^\beta   \ud\lambda_1\cdots\ud\la_n,\\
& = &  \ff{Z_n^\beta} \int_{\Delta_n}   \frac{\partial}{\partial \lambda_p}\left(e^{-n\beta \sum_{i=1}^nV(\lambda_i)} \right)\prod_{1\le i<j\le n}^n (\lambda_i-\lambda_j)^\beta   \ud\lambda_1\cdots\ud\la_n,\\
& = & -n \beta\E\left[V'( \lambda_p^n)\right],
\end{eqnarray*}
by integration by parts.
Equation \eqref{1622010.1} follows, since
$\lambda^n_p$ converges almost surely to $a_V$ (and concentration inequalities
insures $V'( \lambda_p^n)$ is uniformly integrable). 
But, for any $\e>0$,
$$ \frac{1}{n}\sum_{i\neq p}\frac{1}{\lambda^n_i-\lambda^n_{p}
}\ge \frac{1}{n}\sum_{i\neq p}\frac{1}{\e+ \lambda^n_i-\lambda^n_{p}
}$$
with, by convergence of the spectral measure and of $\lambda^n_p$,
the right hand side converging to $-G_{\mu_X}(-a_V -\e)$
which converges as $\e$ decreases to zero to $-G_{\mu_X}(-a_V)=-V'(a_V)$. 
Hence, $ \frac{1}{n}\sum_{i\neq p}\frac{1}{\lambda^n_i-\lambda^n_{p}
}$ is bounded below by $-V'(a_V)$ with large probability for large $n$,
and converges in expectation to  $-V'(a_V)$,
and therefore converges in probability to  $-V'(a_V)$.

Moreover,  by  \cite{shcherbina} (see \cite{TW} in the Gaussian case), 
the joint law of $$\left(n^{2/3}(\lambda_1^n-a_V), n^{2/3}
(\lambda_2^n-a_V),\ldots, n^{2/3}(\lambda_p^n-a_V)\right)$$
converges weakly towards a
probability measure which is absolutely continuous
with respect to Lebesgue measure.  
As a consequence, we also deduce from the first point that $n^{-1}\sum_{i<m_n}
(\lambda_p^n-\lambda_i^n)^{-1}$
  vanishes as $n$ goes
to infinity in probability for $m_n\ll n^{1/3}$
 and therefore \eqref{1622010.1} proves the lacking point of Hypothesis 
\ref{H3a}. 

For the two other points,
observe that  \cite{shcherbina} implies that
  for any $\e>0$,
$
\Pro(|\lambda_2^n-\lambda_{1}^n|\le n^{-\frac{2}{3}-\e})\ninf 0.$
On the event $\{|\lambda_2^n-\lambda_{1}^n|> n^{-\frac{2}{3}-\e}\}$, we have
$|\lambda_i^n-\lambda_{1}^n|> n^{-\frac{2}{3}-\e}$ for all $i\in [2, n-1]$,
so that
\begin{eqnarray*}
\frac{1}{n^2}\sum_{i=2}^{n}\frac{1}{(\lambda_i^n-\lambda_1^n)^2}
\le n^{-\frac{1}{3}+\e}\frac{1}{n}\sum_{i=2}^{n}\frac{1}{\lambda_i^n-\lambda_1^n}\\
\frac{1}{n^4}\sum_{i=2}^{n}\frac{1}{(\lambda_i^n-\lambda_1^n)^4}
\le n^{-1+3\e}\frac{1}{n}\sum_{i=2}^{n}\frac{1}{\lambda_i^n-\lambda_1^n}
\end{eqnarray*}
so that by \eqref{1622010.1} and Markov's inequality, Hypothesis \ref{H3a} holds in probability
for  any $\eta<1/3$, $\eta_4<1$ and $\alpha > 1/3$.
\end{pr}

\subsection{Wishart matrices}\label{wissec}Let $G_n$ be an $n\times m$ real (or complex) matrix with i.i.d. centred entries with law $\mu$ \st $\int z\ud \mu(z)=0$, $ \int |z|^2\ud \mu(z)=1$ and $\int |z|^4 \ud \mu(z)<\infty$.  Let $X_n=G_nG_n^*/m$.

\begin{propo} \label{1622010.11h29-wishart} 
Let $n,m$ tend to infinity in such a way that   $n/m\to c\in(0,1)$.
The limits of the extreme eigenvalues of $\wtX$ are given by Theorem \ref{241109.17h50}
and the fluctuations of those
which limits are out of $[a,b]$ 
are given by Theorem \ref{theogauss}, where the parameters $a,b, \rho_{\tta}, c_\al$ are given by the following formulas: $a=(1-\sqrt{c})^2$, $b=(1+\sqrt{c})^2$
 $$\rho_\tta :=\begin{cases}\tta+\f{\tta}{\tta-c}&\textrm{if $|\tta-c|>\sqrt{c}$,}\\
b&\textrm{if  $|\tta-c|\le\sqrt{c}$ and $\tta>0$,}\\
a&\textrm{if   $|\tta-c|\le\sqrt{c}$ and $\tta<0$,}\end{cases}
$$ and $$c_\al^2=\begin{cases}\al^2 \left(1- \f{c}{(\al-c)^2}\right)&\textrm{in the i.i.d. model,}\\
\\
\f{\al^2c}{(\al-c)^2} \left(1- \f{c}{(\al-c)^2}\right)&\textrm{in the orthonormalised model.}\end{cases}$$

Assume now that the law of the entries satisfy Hypothesis \ref{hypwig}. 
If the perturbation has rank one, we have the following precise description of the fluctuations
of
the extreme  eigenvalues of $\wtX$ :
\begin{itemize}
 \item If  $\theta> c+\sqrt c$ (resp.  $\theta< c-\sqrt c$),
for all $p \ge 2$,  $n^{2/3}(\wtl_{n-p+1}^n- 2\sigma)$ (resp. $n^{2/3}(\wtl_p^n- 2\sigma)$) 
converges in law to the $p-1$th Tracy Widom law.
\item If $0\le \theta < c+\sqrt c$  (resp. $c-\sqrt c < \theta \le 0$), for all $p \ge 1,$
$n^{2/3}(\wtl_{n-p+1}^n- 2\sigma)$ (resp. $n^{2/3}(\wtl_p^n- 2\sigma)$) 
converges in law to the $p$th Tracy Widom law.
\end{itemize}
If the perturbation has rank more than one  and for all $i,$
$\tta_i \notin \{c+\sqrt c, c - \sqrt c\}$, the extreme eigenvalues of $\wtX$ are at distance 
less than $n^{-1+\e}$ for any $\e>0$ to the extreme eigenvalues of $X_n,$ which have Tracy-Widom fluctuations.
\end{propo}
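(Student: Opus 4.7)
\begin{pr}
The strategy parallels exactly that of the Wigner case (Proposition \ref{1622010.11h29}): we shall apply Theorem \ref{theoprob} and verify that the relevant hypotheses on $X_n$ hold in probability for the Wishart ensemble. The Marchenko--Pastur theorem (see e.g.\ \cite{bai-silver-book}) asserts that, under the hypothesis $\int|z|^{2}d\mu=1$, the spectral measure of $X_n$ converges in probability to the Marchenko--Pastur law $\mu_{MP,c}$ with support $[a,b]=[(1-\sqrt c)^{2},(1+\sqrt c)^{2}]$, and the extreme eigenvalues converge to $a$ and $b$ (the latter requiring the finite fourth moment), hence Hypothesis \ref{hypspec} is satisfied in probability. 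The explicit Stieltjes transform of $\mu_{MP,c}$ on $\R\setminus[a,b]$ is
\[
G_{\mu_X}(z)=\frac{z-(1-c)-\sgn(z-1)\sqrt{(z-a)(z-b)}}{2cz},
\]
and solving $G_{\mu_X}(z)=1/\theta$ yields $\rho_\theta=\theta+\theta/(\theta-c)$ in the regime $|\theta-c|>\sqrt c$; otherwise $\rho_\theta\in\{a,b\}$ as stated. Moreover $\overline\theta=c+\sqrt c$ and $\underline\theta=c-\sqrt c$. The announced formulas for $c_\alpha^{2}$ follow from a direct differentiation: from $\rho_\alpha=\alpha+\alpha/(\alpha-c)$ one computes $d\rho_\alpha/d\alpha=1-c/(\alpha-c)^{2}$, and combined with the identity $-G_{\mu_X}'(\rho_\alpha)=1/[\alpha^{2}d\rho_\alpha/d\alpha]$ inserted into \eqref{28110.1} gives the i.i.d.\ formula, and then a short algebraic manipulation gives the orthonormalised formula.

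For the fluctuations away from the bulk we have to verify Hypothesis \ref{H2}: the convergence of $\sqrt n(G_{\mu_n}(z)-G_{\mu_X}(z))$ to $0$ at each fixed $z\notin[a,b]$. This follows directly from the central limit theorem for linear statistics of Wishart matrices with general entries proved by Bai--Silverstein (see also \cite{bai-yao-TCL}), applied to the test function $t\mapsto (z-t)^{-1}$ which is analytic in a neighborhood of $[a,b]$: the fluctuation $n(G_{\mu_n}(z)-G_{\mu_X}(z))$ has a Gaussian limit, so the $\sqrt n$-scaled quantity tends to zero in probability. The conclusions for the limits and the Gaussian fluctuations outside the bulk then follow from items (1) and (2) of Theorem \ref{theoprob}.

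The study of the eigenvalues sticking to the edges is the main technical point and is carried out exactly as in the Wigner case, with the Marchenko--Pastur edge replacing $\pm2\sigma$. Under Hypothesis \ref{hypwig} on the entries of $G_n$, edge universality results for Wishart matrices (Soshnikov \cite{Sosh}, P\'ech\'e, together with local laws and the rigidity / level repulsion estimates of Tao--Vu type \cite{TV} and Erd\H os--Yau--Yin type) imply that the $p$ smallest (resp.\ largest) eigenvalues of $X_n$, rescaled by $n^{2/3}$ around $a$ (resp.\ $b$), converge jointly to the $\beta=1,2$ Tracy--Widom distribution and satisfy the local spacing estimates needed to run the Schlein-type argument. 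This gives the analog of Lemma \ref{lemschlein} (replacing $\lambda^n_j+2$ by $\lambda^n_j-b$ or $\lambda^n_j-a$) and the analog of the subsequent lemma controlling $n^{-1}\sum_{i\ge p+1}(\lambda^n_i-b)^{-1}$ via a combination of the Sinai--Soshnikov / Ruzmaikina moment bound and the Tao--Vu local law; the target limit of this sum is $1/\overline\theta=1/(c+\sqrt c)$ (and $1/\underline\theta$ at the lower edge), as computed from the explicit value $G_{\mu_X}(b)$. With Hypothesis \ref{H3a}$[p,\alpha]$ verified in probability for every $p$ and every $\alpha>1/3$, items (3) of Theorem \ref{theoprob} apply: Theorem \ref{theostick} gives the $n^{-1+\e}$ sticking estimate, and since the extreme eigenvalues of $X_n$ themselves have Tracy--Widom fluctuations on the scale $n^{-2/3}\gg n^{-1+\e}$, the perturbed extreme eigenvalues inherit those fluctuations.

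For a rank-one perturbation the precise identification of the index is obtained from Theorem \ref{exact1d}: according to the sign of $1/\overline\theta-1/\theta$ (resp.\ of $1/\underline\theta-1/\theta$), $\widetilde\lambda_{n-p+1}^n$ sticks to $\lambda_{n-p+1}^n$ if $|\theta-c|\le\sqrt c$ (resp.\ is translated by one to $\lambda_{n-p+2}^n$) when $\theta>c+\sqrt c$, from which the stated Tracy--Widom orders $p$ and $p-1$ follow. For rank greater than one, Theorem \ref{corstick} identifies, in the i.i.d.\ model, which eigenvalue of $X_n$ each sticking eigenvalue of $\wtX$ is close to. The main obstacle in the argument is the verification of Hypothesis \ref{H3a} in the Wishart setting, because it requires combining moment estimates of Soshnikov/Ruzmaikina type with the local eigenvalue rigidity needed to rule out small gaps near the edge; the rest of the proof is a direct transcription of the Wigner case.
\end{pr}
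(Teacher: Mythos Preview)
Your proposal is correct and follows essentially the same route as the paper: reduce to Theorem \ref{theoprob}, verify Hypothesis \ref{hypspec} via Marchenko--Pastur and \cite[Th.~5.11]{bai-silver-book}, verify Hypothesis \ref{H2} via the Bai--Silverstein CLT for linear statistics, and then transpose the Wigner argument for Hypothesis \ref{H3a} using edge universality and local laws (the paper invokes specifically \cite{sandrinePTRF09} for Tracy--Widom convergence and \cite[Th.~9.1, Lemma~8.1]{ERYY10} for the rigidity and density inputs replacing those of \cite{TV} and \cite{ESY08}). Your added explanation of the $c_\alpha$ computation via $d\rho_\alpha/d\alpha$ is a helpful detail the paper omits.
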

 
 Before getting into the proof, let us make a   remark. The  Proposition above
generalizes some results first appeared in \cite{BBP,{FeralPeche09}}. In these papers,
the authors consider models with multiplicative perturbations (in the sense that the population covariance $\Sigma$
matrix is assumed to be a perturbation of the identity). Here, we consider additive perturbations
but the two models are in fact similar, since  a Wishart matrix can be written as a sum
of rank one matrices $\sum_{i=1}^m \sigma_i Y_i Y_i^*,$ with $\sigma_i$ the eigenvalues of  $\Sigma$
and $Y_i$ $n$-dimensional vectors with i.i.d. entries. So, adding our perturbation
$\sum_{i=1}^r \theta_i U_i U_i^*$ boils down to change $m$ into $m+r$ (the limit of $m/n$ is not changed)  and to extend $\Sigma $ with some new eigenvalues $\tta_1,\ldots, \tta_r$.
 
 \begin{pr} 
Again, it suffices to verify that the  hypotheses 
hold in probability for $(X_n)_{n\ge 1}$. 

 It is known, \cite{Marchenko-Pastur}, that  the spectral measure of $X_n$ converges to the so-called Mar\v cenko-Pastur 
distribution $$\ud\mu_X(x):=\ff{2\pi cx}\sqrt{(b-x)(x-a)}\one_{[a,b]}(x)\ud x,$$where $a=(1-\sqrt{c})^2$ and $b=(1+\sqrt{c})^2$. It is known, \cite[Th. 5.11]{bai-silver-book}, that the extreme eigenvalues converge to the bounds of this support.  
The formula $$G_{\mu_X}(z)=\f{z+c-1-\sgn(z-a)\sqrt{(z-c-1)^2-4c}}{2cz} \qquad\textrm{($z\in \R\bck [a,b]$)}$$ allows to compute $\rho_\tta$ and $c_\al$. 
Moreover, by \cite[Th. 1.1]{bai-silver-2004} or \cite[Th. 9.10]{bai-silver-book}, we also know   that
a central limit theorem holds for the linear statistics
of Wishart matrices, giving Hypothesis \ref{H2} as in the Wigner case.

 For Hypothesis \ref{H3a}, the proof is  similar to the Wigner
case. The convergence to the Tracy-Widom law 
of the non-perturbed matrix is due to S. P\'ech\'e \cite{sandrinePTRF09}
(see  \cite{Nagao} and \cite{Forrester} for the Gaussian case). The approximation of the eigenvalues by the quantiles of the limiting law can be found in 
\cite[Theorem 9.1]{ERYY10} whereas the absolute continuity property needed to
prove Lemma \ref{lemschlein} is derived in \cite[Lemma 8.1]{ERYY10}.
This allows to prove Hypothesis \ref{H3a} in this setting as in the Wigner case,
we omit the details.\end{pr}

\subsection{Non-white ensembles}

In the case of non-white matrices,
we can only study the fluctuations
away from the bulk (since we do not have the appropriate information 
about the top eigenvalues to prove Hypothesis \ref{H3a}).  
We illustrate this generalisation
in a few cases, but it is rather clear that
Theorem \ref{theogauss} applies in a much wider generality.

\subsubsection{Non-white Wishart matrices}

The first statement of Proposition \ref{1622010.11h29-wishart} can be generalised to matrices $X_n$ of the type $X_n=\ff{m}T_n^{1/2}G_nG_n^*T_n^{1/2}$ or $\ff{m} G_nT_nG_n^* $, where $G_n$is  an $n\times m$ real (or complex) matrix with i.i.d. centred entries with law $\mu$ \st $\int z\ud \mu(z)=0$, $ \int |z|^2\ud \mu(z)=1$ and $\int |z|^4 \ud \mu(z)<\infty$
  and $T_n$ is a positive non random Hermitian $n\times n$ matrix with
 bounded operator norm, with a converging empirical spectral law
and with no eigenvalues outside any neighborhood of  the
support of the limiting measure for sufficiently
large $n$. Indeed, in this case, everything, in the proof, stays true (use \cite[Th.1.1]{bai-silver-98}  and \cite[Th. 5.11]{bai-silver-book}). However, when the limiting empirical distribution of $T_n$ is not a Dirac mass,  the computation of the $\rho_\tta$'s and the $c_\al$'s is not easy. 

\subsubsection{Non-white Wigner matrices}
There are less  results in the literature
about the central limit theorem for band matrices 
(with centring with respect to the limit) and
the convergence of the spectrum. We therefore 
concentrate on a special case, namely  a Hermitian matrix $X_n$
with independent Gaussian centred entries so that
 $E[|X_{ij}|^2]= n^{-1}\sigma(i/n,j/n)$
with a stepwise constant function $$\sigma(x,y)=\sum_{i,j=1}^k
1_{\frac{i-1}{k}\le x< \frac{i}{k}\atop \frac{i-1}{k}\le y< {\frac{i}{k}}}
\sigma_{i,j}.$$
In \cite{male},  matrices of the 
form
$S_n=\sum_{j=1}^{k(k+1)} a_j\otimes X_j^{(n)}$
with some independent  matrices $X_j^{(n)}$ from the GUE
and self-adjoint matrices
$a_j$ were studied. Taking $a_j
=(\epsilon_{p,\ell}+\epsilon_{\ell,p})\sigma_{p,\ell}$ or $
i(\epsilon_{p,\ell}-\epsilon_{\ell,p})\sigma_{p,\ell}$ with $\epsilon_{p,\ell}$
the matrix with null entries except at $(p,\ell)$ and $1\le p\le \ell\le k$,
we find that $X_n=S_n$. Then it was proved  \cite[(3.8)]{male}
that there exists
 $\alpha,\epsilon,\gamma>0$
so that  for $z$ with imaginary part greater than  $n^{-\gamma}$ for some $\gamma>0$,

\begin{equation}\label{contqw}
\left|E\left[\frac{1}{n}\tr(z-X_n)^{-1}\right]- G(z)\right|\le (\Im z)^{-\alpha}n^{-1-\e}\end{equation}
which entails the convergence of the spectrum of $X_n$ towards 
the support of the limiting measure \cite[Proposition 11]{male}
with exponential speed by \cite[Proof of Lemma 14]{male}.
Thus $X_n$ satisfies Hypothesis \ref{hypspec}. Hypothesis \ref{H2} can be checked 
by modifying slightly the proof of \eqref{contqw}
which is based on an integration by parts
to be able to take $z$ on the real
line but away from the limiting support. Indeed, as in \cite[Section 3.3]{edouard}, we can add a smooth  cut-off function in the expectation
which vanishes outside of the event $A_n$
that  
$X_n$ has all its eigenvalues within a small neighborhood
of the limiting support. This additional cut-off will
only give a small error in the integration by parts
due to the previous point. Then, \eqref{contqw},
but with an expectation restricted to this event, is proved exactly in the
same way, except that $\Im z$ can be replaced by the distance of $z$
to the neighborhood of the limiting support
where the eigenvalues of $X_n$ lives. 
Finally,  concentration inequalities, in the local version  \cite[Lemma 5.9 and Part II]{aliceStFlour}, insure that
on $A_n$,
 $$\frac{1}{n}\tr(z-X_n)^{-1}-E\left[1_{A_n} \frac{1}{n}\tr(z-X_n)^{-1}\right]$$
is at most of order $n^{-1+\e}$ with overwhelming
probability. This completes the proof of Hypothesis \ref{H2}.

\subsection{Some models for which our hypothesis are not satisfied}\mbox{}\\

We gather hereafter a few remarks about some models 
for which the hypothesis we made on $X_n$ are not satisfied. For sake of simplicity,
we present hereafter only the case of i.i.d. perturbations (1).

\subsubsection{I.i.d. eigenvalues with compact support}
We assume that $X_n$ is diagonal with i.i.d. entries which law $\mu$ is compactly supported. 
As in the core of the paper, we denote by a (resp. b) the left (resp. right) edge of the support 
of $\mu.$ We also denote by $F_\mu$ its cumulative distribution function and assume that there is  $\kappa>0$ \st for all $c>0,$
\be \label{defk}
\lim_{x \ra 0^+} \frac{1-F_\mu(b-cx)}{1-F_\mu(b-x)} = c^\kappa
\ee

In this situation, it is easy to check that Hypothesis \ref{hypspec} holds in probability with $\mu_X=\mu$.
But Hypothesis \ref{H2} is not satisfied. Indeed,  by classical CLT, we have, for $\rho_\alpha \notin [a,b],$
$$ W_\alpha^n=\sqrt n (G_{\mu_n}(\rho_\alpha) - G_\mu(\rho_\alpha))  $$
converges in law, as $n$ goes to infinity to a Gaussian variable $W_\alpha$
 with variance $- G^\prime_{\mu}(\rho_\alpha)- G_\mu(\rho_\alpha)^2.$
Moreover, $$E[W_\alpha W_{\alpha'}]=\int \frac{1}{(\rho_\alpha-\lambda)(\rho_{\alpha'}-\lambda)} d\mu(\lambda)-G_\mu(\rho_\alpha)G_\mu(\rho_{\alpha'}).$$

Nevertheless, Theorem \ref{theogauss} holds for this model. Indeed, the whole proof of this theorem goes through
in this context, except  the proof of Lemma \ref{5110.23h51}, where we have to make the following decomposition $ M_{s,t}^{n}(i,x) =M_{s,t}^{n,1}(i,x) +M_{s,t}^{n,2}(i,x)+
M_{s,t}^{n,3}(i,x) $
with the difference that this time $M_{s,t}^{n,3}$ does not go to zero
but converges towards $W_{\alpha_{i}}$. Hence, the eigenvalues fluctuate according to the distribution of the eigenvalues of 
$(c_jM_j+W_{\al_j}I_{k_j})_{1\le j\le q}$, with $c_j$ and $M_j$ as in the statement of Theorem \ref{theogauss} and $I_{k_j}$ denotes the $k_j\times k_j$ identity  matrix.

Let us now consider the fluctuations near the bulk. We first detail the fluctuations of the extreme eigenvalues of $X_n.$
According to \cite{Gumbel}, the fluctuations of the largest eigenvalues of $X_n$ are determined by the parameter $\kappa$ defined in \eqref{defk},
that is, if $v_n = F_\mu(b-1/n),$ then the law of $\frac{b-\la^n_n}{b-v_n}$ converges weakly to the law with density proportional to $e^{-x^\kappa}$ 
on $\R^+.$ Otherwise stated, the fluctuations of $\la^n_n$ are of order $n^{-1/\kappa}$ with asymptotic distribution the Gumbel distribution of type 2. One can check that if $\kappa \le 1,$ then $\ovl \theta = 0.$\\
One can show that, for any fixed $p,$ for  
Hypothesis  \ref{H3a}$[p,\alpha]$ to hold, we need $\alpha > \frac{1}{\kappa}- \frac{1}{2}$ and we then obtain that the distance of the extreme eigenvalues of the deformed matrix is at distance less that $n^{-1 + \alpha^\prime}$ for any $\alpha^\prime > \alpha.$ 
Therefore if $\kappa > 4/3,$ this theorem allows us to deduce that the  fluctuations of the extreme eigenvalues of the deformed matrix
are the same as those of the non-deformed matrix.



\subsubsection{Coulomb gases with non-convex potentials}
In \cite{pastur}, Pastur showed that for a Coulomb gas law \eqref{cgas} with a potential $V$
so that the equilibrium measure has a disconnected support, the central limit theorem does not hold 
in the sense that the variance may have different limits according to subsequences (see \cite[(3.4)]{pastur}.
Moreover the asymptotics of $\sqrt{n}(\tr(X_n)-\mu(x))$ can be computed sometimes and do
not lead to a Gaussian limit. We might expect then that also $\sqrt{n}(G_{\mu_n}(x)-G_\mu(x))$
converges to a non-Gaussian limit, which would then result with non-Gaussian fluctuations for
the eigenvalues outside of the bulk.

\section{Appendix}
\subsection{Determinant formula}
We here state  formula \eqref{eqint}, which can be deduced from 
the well known formula $\det\left(
\begin{array}{cc}
A&B\cr
C&D\cr
\end{array}\right)= \det(D)\det(A-BD^{-1}C)$.
\begin{lem}\label{detf}
Let $z\in\mathbb C\backslash\{\lambda_1^n,\ldots,\lambda_n^n\}
$ and $\theta_1,\ldots,\theta_r\neq 0$.
Set $D=\diag(\theta_1,\ldots,\theta_r)$ and let $V$ be any $n\times r$ matrix.
Then
$$
\det\left(z-X_n- VDV^*\right)=\det(z-X_n) \det(D)
\det\left( D^{-1} -V^*(z-X_n)^{-1} V\right)$$
\end{lem}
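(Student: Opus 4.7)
The plan is to prove the identity by computing the determinant of a single $(n+r) \times (n+r)$ block matrix in two different ways, each via a Schur complement. Specifically, I would consider
\begin{equation*}
M := \begin{pmatrix} z - X_n & V \\ V^* & D^{-1} \end{pmatrix}.
\end{equation*}
This is well-defined because by hypothesis $z \notin \{\lambda_1^n,\ldots,\lambda_n^n\}$, so $z - X_n$ is invertible, and the $\theta_i$'s are non-zero so $D$ is invertible.

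The first step is to apply the Schur complement formula recalled in the excerpt, taking the bottom-right block $D^{-1}$ as the invertible pivot. This gives
\begin{equation*}
\det(M) = \det(D^{-1}) \, \det\bigl((z - X_n) - V D V^*\bigr).
\end{equation*}
The second step is to compute the same determinant using instead the top-left block $z - X_n$ as the invertible pivot (the Schur complement formula applies symmetrically, since one can permute rows and columns to put either block in the lower-right position without changing the determinant up to a sign that cancels). This yields
\begin{equation*}
\det(M) = \det(z - X_n) \, \det\bigl(D^{-1} - V^* (z - X_n)^{-1} V\bigr).
\end{equation*}

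Equating the two expressions and multiplying by $\det(D)$ (which is non-zero) to eliminate the $\det(D^{-1})$ factor on the left, one obtains exactly the claimed identity
\begin{equation*}
\det\bigl(z - X_n - V D V^*\bigr) = \det(z - X_n) \, \det(D) \, \det\bigl(D^{-1} - V^* (z - X_n)^{-1} V\bigr).
\end{equation*}
There is no real obstacle here: the only point requiring minor care is justifying the symmetric form of the Schur complement (pivoting on the top-left rather than the bottom-right block), which follows from a standard row/column permutation argument or by conjugating $M$ by the block permutation matrix $\bigl(\begin{smallmatrix} 0 & I_r \\ I_n & 0\end{smallmatrix}\bigr)$, whose determinant squared is $1$.
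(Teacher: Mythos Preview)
Your proof is correct and follows exactly the approach the paper indicates: the paper states that the lemma ``can be deduced from the well known formula $\det\left(\begin{smallmatrix}A&B\\C&D\end{smallmatrix}\right)= \det(D)\det(A-BD^{-1}C)$'' without giving further details, and your argument is precisely the standard two-way Schur complement computation that realizes this deduction.
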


\subsection{Concentration estimates} \label{section.concentration.estimates}
\begin{propo}\label{hanson}
Under Assumption \ref{hyponG}, there exists
a constant $c>0$ so that
 for any matrix $A:=(a_{jk})_{1\le j,k\le n}$
with complex entries, for any $\delta >0$,
for any $g=(g_1,\ldots,g_n)^T$ with i.i.d. entries $(g_i)_{1\le i\le n}$ with law $\nu$, 
$$\mathbb P\left( |\langle g, A g\rangle-\E[\langle g, A g\rangle]
 |>\delta\right)
\le 4 e^{-c\min\{\frac{\delta}{C},\frac{\delta^2}{C^2}\}}$$
if $C^2=\tr (AA^*)$ and if $\tilde g$  is an
 independent copy of
$g$, for any $\delta,\kappa>0$,
$$ {\mathbb P}\left( |\langle g, A \tilde g\rangle
 |>\delta \sqrt{\tr (AA^*)+\kappa\sqrt{\Tr((AA^*)^2)}} \right)
\le 4e^{-c\delta^2}+ 4e^{-c\min\{\kappa,\kappa^2\}}
.$$
\end{propo}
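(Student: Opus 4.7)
The plan is to establish the two inequalities in turn, deriving the second from the first by a conditioning argument. For the first inequality, I would begin by reducing to the case where $A$ is Hermitian: replacing $A$ by $(A+A^*)/2$ alters $C^2=\tr(AA^*)$ by at most a factor of two and captures the full real part of $\langle g,Ag\rangle$ (the anti-Hermitian part is handled identically). Decomposing $A = D + B$ into its diagonal and off-diagonal parts, write
\[\langle g,Ag\rangle - \E[\langle g,Ag\rangle] = \sum_i a_{ii}\bigl(|g_i|^2 - \E|g_i|^2\bigr) + \sum_{i\ne j} a_{ij}\bar g_i g_j.\]
For the diagonal contribution, Assumption \ref{hyponG} yields sub-Gaussian tails for each $g_i$ as a direct consequence of the log-Sobolev inequality, hence sub-exponential tails for $|g_i|^2 - \E|g_i|^2$. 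A Bernstein inequality then produces a bound of the form $4\exp(-c\min\{\delta/\max_i|a_{ii}|,\ \delta^2/\sum_i|a_{ii}|^2\})$, which is absorbed in the claim since $\max_i |a_{ii}| \le C$ and $\sum_i|a_{ii}|^2 \le C^2$.

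For the bilinear off-diagonal term I would use the de la Pe\~na--Montgomery-Smith decoupling inequality to replace $\sum_{i\ne j} a_{ij}\bar g_i g_j$ (up to a universal constant in any exponential moment) by $\sum_{i,j} a_{ij} \bar g_i g'_j$ with $g'$ an independent copy of $g$. Conditioned on $g$, this is the linear functional $g' \mapsto \langle A^*g, g'\rangle$, which is $\|A^*g\|_2$-Lipschitz; tensorized LSI combined with Herbst's argument then yields a conditional sub-Gaussian estimate with variance proxy $c\|A^*g\|_2^2$. To integrate out $g$ I would control the quadratic form $\|A^*g\|_2^2 = \langle g, AA^* g\rangle$ by a truncation argument: on the event $\{\|A^*g\|_2^2 \le 2\tr(AA^*)\}$ the conditional bound gives the Gaussian regime for $\delta \lesssim C$, while the complementary event is handled via Markov using an exponential moment of $\langle g, AA^*g\rangle$ controlled by $\tr(AA^*) \le C^2$ and $\Tr((AA^*)^2) \le C^4$, producing the linear regime for $\delta \gtrsim C$. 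Combined with the diagonal estimate this yields the first inequality.

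For the second inequality, conditioning on $g$ makes $\tilde g \mapsto \langle g, A\tilde g\rangle = \langle A^*g, \tilde g\rangle$ an $\|A^*g\|_2$-Lipschitz function of $\tilde g$, so tensorized LSI applied to $\tilde g$ gives
\[\Pro\bigl(|\langle g, A\tilde g\rangle| > \delta \|A^*g\|_2 \bigm| g\bigr) \le 2 e^{-c\delta^2}.\]
Applying the first inequality to the Hermitian matrix $AA^*$ (whose trace equals $\tr(AA^*)$ and whose squared Hilbert--Schmidt norm equals $\Tr((AA^*)^2)$) then gives
\[\Pro\bigl(\|A^*g\|_2^2 > \tr(AA^*) + \kappa\sqrt{\Tr((AA^*)^2)}\bigr) \le 2 e^{-c\min\{\kappa,\kappa^2\}},\]
and a union bound on the two events produces the claimed estimate.

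The main obstacle will be producing the correct crossover between the Gaussian and sub-exponential tails in the first inequality with the Hilbert--Schmidt norm $C$ governing both regimes. The gradient of the quadratic form $g\mapsto\langle g,Ag\rangle$ grows linearly in $\|g\|$, so a direct Lipschitz-based entropy argument does not apply; this forces the recourse to decoupling and to the self-referential moment estimate on $\langle g, AA^* g\rangle$ described in the truncation step, which is where the bulk of the bookkeeping lies.
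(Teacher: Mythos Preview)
Your treatment of the second inequality coincides with the paper's proof: condition on one of the two vectors, observe that the bilinear form is Lipschitz in the other with constant $\sqrt{\langle g, AA^*g\rangle}$, apply the Gaussian concentration coming from the tensorized log-Sobolev inequality, and then invoke the first inequality for the Hermitian matrix $AA^*$ to control the random Lipschitz constant. The union bound is the same.

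For the first inequality the approaches diverge. The paper does not prove it; it simply cites the Hanson--Wright theorem (and \cite[Proposition 4.5]{ESY08}) as a black box, noting that Assumption \ref{hyponG} guarantees sub-Gaussian tails for the entries $g_i$, which is all that Hanson--Wright requires. You instead sketch a self-contained proof along the modern Rudelson--Vershynin lines: diagonal/off-diagonal split, Bernstein for the diagonal, decoupling for the off-diagonal, conditional Lipschitz concentration, and a truncation on $\|A^*g\|_2^2$. This is a legitimate route, and you correctly identify the truncation step as the crux. Be aware, though, that the way you phrase it is nearly circular: obtaining ``an exponential moment of $\langle g,AA^*g\rangle$ controlled by $\tr(AA^*)$ and $\Tr((AA^*)^2)$'' is itself a Hanson--Wright-type statement. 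The standard resolution does not use a fixed threshold $2\tr(AA^*)$ but rather linearizes $e^{\lambda\|A^*g\|^2}$ via an auxiliary Gaussian and exploits sub-Gaussianity of the $g_i$ coordinatewise; the resulting bound naturally involves $\|A\|_{\mathrm{op}}$, which is then dominated by $C=\|A\|_{\mathrm{HS}}$ to recover the stated form. So your plan works, but the bookkeeping you defer is exactly the content of the theorem. Given that the paper's purpose is to \emph{use} the estimate rather than reprove it, the citation is the more economical choice; your approach buys self-containment at the cost of reproducing a known argument.
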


\begin{pr}
The first point is due to  Hanson-Wright Theorem \cite{hanson}, see also 
\cite[Proposition 4.5]{ESY08}. For the second, we use concentration inequalities, see e.g. \cite[Lemma 2.3.3]{alice-greg-ofer}, based on the remark
that for any fixed $\tilde g$, $g\rightarrow \langle g, A \tilde g\rangle$
is Lipschitz with constant $\sqrt{\langle\tilde  g, AA^* \tilde g\rangle}$
and therefore, conditionally
to  $\tilde g$, for any $\delta>0$,
$$\mathbb P\left( |\langle g, A \tilde g\rangle
 |>\delta \sqrt{\langle\tilde  g, AA^* \tilde g\rangle} \right)
\le 4e^{-c\delta^2}$$
On the other hand, the previous estimate shows that
$$\mathbb P\left(|{\langle\tilde  g, AA^* \tilde g\rangle}-\tr(AA^*)|>\kappa\sqrt{\tr(AA^*)^2}\right)\le   4 e^{-c\min\{\kappa,\kappa^2\}}\,.$$
As a consequence, we deduce the second point of the proposition. \end{pr}

Let ${G}^n=\bbm g_1^n\cdots g_r^n\ebm$ be an $n \times r$ matrix  which  columns $g^n_1,\ldots, g^n_r$, are independent copies of an $n\times 1$ matrix with i.i.d. entries   with law $\nu$ and define
$$V_{i,j}^n=\ff{n}\langle g^n_i,g^n_j\rangle,\quad 1\le i,j\le r,$$
and, for $j\le i-1$, if $\det [V^n_{k,l}]_{k,l=1}^{i-1}\neq 0$,
$$W_{i,j}^n=\f{\det [\gamma_{k,l}^{n,j}]_{k,l=1}^{i-1}}{
\det [V^n_{k,l}]_{k,l=1}^{i-1}}, \textrm{ with } 
\gamma_{k,l}^{n,j} = \left\{ \begin{array}{ll}
                   V^n_{k,l},& \textrm{if } l\neq j, \\
- V^n_{k,i},& \textrm{if } l= j .
                  \end{array} \right.
$$
On $\det [V^n_{k,l}]_{k,l=1}^{i-1}=0$, we give to $W_{i,j}^n$ an arbitrary 
value, say one.
Putting $W^n_{ii}=1$ and $W^n_{ij}=0$ for $j\ge i+1$, it is
a standard linear algebra exercise to check that the column vectors
$$v_i^n=\sum_{j=1}^rW^n_{i,j}{g^n_j}=\textrm{$i$th column of $G^n(W^{n})^{T}$} $$
are orthogonal in $\mathbb C^n$. Let us introduce, for $M$ an $r\times r$ matrix,  $\|M\|_\infty=\sup_{1\le i,j\le r}|M_{i,j}|$. We next prove

\begin{propo}\label{convm}
For any $\gamma >0$, there exists
finite positive constants $c,C$ (depending on $r$) so that for $Z^n=V^n$ or $W^n$,
$$\mathbb P\left(\|Z^n-I\|_\infty\ge n^{-\frac{1}{2}}\gamma\right)
\le C \left[e^{-4^{-1}c \gamma^2}+e^{-c\sqrt{n}}\right].$$
Moreover, with $\|v||_2^2=\sum_{i=1}^n |v_i|^2$, for any $\gamma\in (0,\sqrt{n}(2^{-r}-\epsilon)$ for some $\e >0,$
 $$\mathbb P\left(
\max_{1\le i\le r}\left|\frac{1}{n}\|\sum_{j=1}^r Z_{ij}^ng^n_j\|_2^2 -1\right|
\ge  {n^{-\frac{1}{2}}\gamma} 
\right)\le C \left[e^{-4^{-1}c 2^{-r}\gamma^2}+4e^{-c\sqrt{n}}\right].$$
\end{propo}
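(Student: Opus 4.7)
\textbf{Proof plan for Proposition \ref{convm}.}

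I proceed in three steps: entrywise concentration of $V^n$, passage from $V^n$ to $W^n$, and control of the normalisation factors.

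\emph{Step 1: Concentration of $V^n$ via Hanson--Wright.} The diagonal entries satisfy $V^n_{i,i}-1=\langle g^n_i,(I_n/n)g^n_i\rangle-\E[\langle g^n_i,(I_n/n)g^n_i\rangle]$ since $\nu$ is centred with variance one. Applying the first part of Proposition \ref{hanson} with $A=I_n/n$ (so $C^2=\tr(AA^*)=1/n$) yields
\[
\Pro\lf(|V^n_{i,i}-1|>\gamma/\sqrt{n}\ri)\le 4\exp\bigl(-c\min\{\gamma\sqrt{n},\gamma^2\}\bigr),
\]
and for $i\neq j$, $V^n_{i,j}$ is a bilinear form in two independent copies, so the second part with $\kappa=\sqrt n$ gives
\[
\Pro\lf(|V^n_{i,j}|>\gamma\sqrt{2/n}\ri)\le 4e^{-c\gamma^2}+4e^{-c\sqrt n}.
\]
A union bound over the $r^2$ entries (absorbing $\sqrt 2$ into constants) yields the first estimate for $Z^n=V^n$, the two regimes in the $\min$ being handled respectively by the $e^{-c\gamma^2/4}$ and $e^{-c\sqrt n}$ terms.

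\emph{Step 2: From $V^n$ to $W^n$.} Work on the event $\mc E_\gamma:=\{\|V^n-I\|_\infty\le\gamma/\sqrt n\}$. The entries of $W^n$ are ratios of determinants of submatrices of $V^n$, so they are smooth functions of the entries of $V^n$ near $I$. Provided $\gamma/\sqrt n$ is smaller than an $r$-dependent threshold, each denominator $\det[V^n_{k,l}]_{k,l=1}^{i-1}$ is bounded below, and expanding the numerator $\det[\gamma^{n,j}_{k,l}]_{k,l=1}^{i-1}$ along its $j$-th column---using that the $(k,j)$-cofactor of a matrix within $\gamma/\sqrt n$ of $I_{i-1}$ equals $\delta_{k,j}+O_r(\gamma/\sqrt n)$---yields, for $j<i$,
\[
W^n_{i,j}=-V^n_{j,i}\bigl(1+O_r(\gamma/\sqrt n)\bigr)+O_r\bigl((\gamma/\sqrt n)^2\bigr),\qquad W^n_{i,i}=1,
\]
so $\|W^n-I\|_\infty\le C_r\gamma/\sqrt n$ on $\mc E_\gamma$. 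Rescaling $\gamma$ by $1/C_r$ and combining with Step 1 gives the first estimate for $Z^n=W^n$.

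\emph{Step 3: The normalisation factors.} A direct expansion (using $V^n_{j,j'}=\overline{V^n_{j',j}}$) gives
\[
\ff{n}\bigl\|\textstyle\sum_{j=1}^r Z^n_{ij}g^n_j\bigr\|_2^2=\sum_{j,j'=1}^r\overline{Z^n_{ij}}\,V^n_{j,j'}\,Z^n_{ij'}.
\]
Setting $E=V^n-I$ and $F=Z^n-I$, the $j=j'=i$ contribution is $|Z^n_{ii}|^2(1+E_{ii})=1+O_r(\gamma/\sqrt n)$, while each of the $r^2-1$ remaining terms is $O_r(\gamma/\sqrt n)$. Hence on the intersection of the events from Steps 1 and 2, the norm lies within $C'_r\gamma/\sqrt n$ of $1$. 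To match the stated rate, one applies the entrywise control of Steps 1--2 at scale $2^{-r/2}\gamma/\sqrt n$: this absorbs the accumulated $r$-dependent constants into the exponential rate and produces the factor $2^{-r}$ appearing both in the range $\gamma<\sqrt n(2^{-r}-\epsilon)$ and in the bound $e^{-c2^{-r}\gamma^2/4}$.

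\emph{Main obstacle.} The chief difficulty is the bookkeeping of $r$-dependent constants in Step 2: each layer of the Gram--Schmidt formula, encoded in the determinant ratio for $W^n_{i,j}$, multiplies the entrywise perturbation by a bounded factor, and it is this $r$-fold compounding that forces both the restriction $\gamma<\sqrt n(2^{-r}-\epsilon)$ and the $2^{-r}$ loss in the exponent of the second bound. Beyond this, only Proposition \ref{hanson} and elementary linear algebra are used.
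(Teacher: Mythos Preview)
Your proposal is correct and follows essentially the same approach as the paper: Hanson--Wright (Proposition \ref{hanson}) with $A=n^{-1}I$ for the entries of $V^n$, a determinant perturbation argument on the event $\{\|V^n-I\|_\infty\le\gamma n^{-1/2}\}$ to control $W^n$, and the identity $\frac{1}{n}\|\sum_j Z^n_{ij}g^n_j\|_2^2=(Z^nV^n(Z^n)^*)_{ii}$ for the last part. The only cosmetic difference is that the paper uses the crude bound $|\det[V^n_{k,l}]_{k,l\le i-1}-1|\le 2^r\gamma n^{-1/2}$ and $|\det[\gamma^{n,j}_{k,l}]_{k,l\le i-1}|\le 2^r\gamma n^{-1/2}$ directly, rather than your more detailed cofactor expansion, which is why the explicit factor $2^{-r}$ appears in the final estimate.
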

\begin{pr}
We first consider the case $Z^n=V^n$.
The maximum of $|V_{ij}^n -\delta_{ij}|$  is controlled by the previous proposition 
with $A=n^{-1}I$, and the result follows from $\tr AA^*=n^{-1}$
and $\tr((AA^*)^2)=n^{-3}$, and choosing $\delta=\gamma/\sqrt{2}$, $\kappa=\sqrt n$.
The result for $W^n$ follows as on $\|V^n-I\|_\infty\le \gamma n^{-\frac 1 2}\le 1$
$$| \det [V_{k,l}]_{k,l=1}^{i-1}-1|\le 2^r \gamma n^{-\frac 1 2},$$
whereas 
$$|\det [\gamma_{k,l}^{n,j}]_{k,l=1}^{i-1}|\le  2^r \gamma n^{-\frac 1 2}.$$
For the last  point, we just notice that since $\frac{1}{n}\|\sum_{j=1}^r Z_{i,j}^ng^n_j\|_2^2=(ZVZ^*)_{i,i}$,
we have 
$$\max_{1\le i\le r}\left|\frac{1}{n}\|\sum_{j=1}^r Z_{ij}^ng^n_j\|_2^2 -1\right|
\le C(r)\max_{Z^n=V^n \textrm{ or }W^n}\|Z^n\|_\infty^2 \max_{Z^n=V^n \textrm{ or }W^n}\|Z^n-I\|_\infty$$
for a finite constant $C(r)$ which 
only depends on $r$. Thus the result follows from
the previous point.\end{pr}

\subsection{Central Limit Theorem for quadratic forms}

\begin{Th}\label{3110.23h27}
Let us fix $r\ge 1$ and let, for each $n$, $A^n(s,t)$ ($1\le s,t\le r$)  be a family of $n\times n$ real (resp. complex)   matrices  \st for all $s,t$, $A^n(t,s)=A^n(s,t)^*$ and \st  for all $s,t=1, \ldots, r$, \begin{itemize}\item[$\bullet$]  in the i.i.d. model, \be\label{3110.happe}
\ff{n}\tr[ A^n(s,t)A^n(s,t)^*]\ninf \sigma_{s,t}^2 , \quad\ff{n}\sum_{i=1}^n|A^n(s,s)_{i,i}|^2\ninf \omega_s ,
\ee
\item[$\bullet$] in the orthonormalised model, 
\be\label{3110.happe-ortho}
\ff{n}\tr[|A^n(s,t)-\ff{n}\Tr A^n(s,t)|^2]\ninf \sigma_{s,t}^2 , \quad \ff{n}\sum_{i=1}^n\left|A^n(s,s)_{i,i}-\ff{n}\Tr A^n(s,t)\right|^2\ninf \omega_s .
\ee
\end{itemize}for some finite numbers $\sigma_{s,t}, \omega_s$ (in the case where $\kappa_4(\nu)=0$, the part of the hypothesis related to $\omega_s$ can be removed).
For each $n$, let us define the $r\times r$ random matrix  \bes\label{3110.vosges}G_n:=\lf[\sqrt{n}\left(\langle u^n_s, A^n(s,t) u^n_t\rangle
-\one_{s=t}\frac{1}{n}\tr( A^n(s,s))\right)
 \ri]_{s,t=1}^r.\ees
 Then the distribution of $G_n$ converges weakly to the distribution of a real symmetric (resp. Hermitian)   random matrix $G=[g_{s,t}]_{s,t=1}^r$    \st   the random variables \beq&\{g_{s,t}\ste 1\leq s\le t\le r\}&\\
&\textrm{(resp. $\{g_{s,s}\ste 1\leq s\le r\}\cup\{\Re(g_{s,t})\ste 1\leq s< t\le r\}\cup 
\{\Im(g_{s,t})\ste 1\leq s< t\le r\}$)}&\eeq
are independent 
and  for all $s$, 
$g_{s,s}\sim \mc{N}(0, 2\sigma_{s,s}^2+\kappa_4(\nu)\omega_s)$ (resp. $g_{s,s}\sim \mc{N}(0, \sigma_{s,s}^2+\kappa_4(\nu)\omega_s)$) and for all $s\neq t$, $g_{s,t}\sim \mc{N}(0,\sigma_{s,t}^2)$ (resp.  $\Re(g_{s,t}), \Im(g_{s,t})\sim \mc{N}(0,\sigma_{s,t}^2/2)$).
\end{Th}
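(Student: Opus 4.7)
The plan is to establish the CLT in the i.i.d.\ model via a martingale CLT, and to deduce the orthonormalised case from it by a recentring argument based on Proposition~\ref{convm}. In the i.i.d.\ model, $u^n_s=n^{-1/2}g^n_s$, so each entry of $G_n$ is (up to the factor $n^{-1/2}$) a centred quadratic or bilinear form in the independent vectors $g^n_1,\ldots,g^n_r$. Filtering by $\mathcal F_k=\sigma(g^n_{s,j}\,;\,1\le s\le r,\,1\le j\le k)$ and using the elementary identity $\E[\langle g^n_s,A^n(s,t)g^n_t\rangle\mid\mathcal F_k]=\sum_{j,k'\le k}A^n(s,t)_{jk'}\bar g^n_{s,j}g^n_{t,k'}+\one_{s=t}\sum_{j>k}A^n(s,s)_{jj}$, one decomposes each centred entry of $G_n$ as a sum of martingale differences $\sum_{k=1}^n D^{s,t}_k$, where $D^{s,t}_k$ collects the $k$th row and column of the truncated form. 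A multivariate martingale CLT then reduces the problem to the convergence of the joint conditional covariances $\sum_k\E[D^{s,t}_k\overline{D^{s',t'}_k}\mid\mathcal F_{k-1}]$ together with a Lindeberg condition $\sum_k\E|D^{s,t}_k|^4\to 0$; the latter is a routine consequence of the sub-Gaussian tails from Assumption~\ref{hyponG} and the bound $\tr(A^n(s,t)A^n(s,t)^*)=O(n)$ from \eqref{3110.happe}.

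The covariance identification is the main step. For a diagonal entry the uncorrelated decomposition
\bes
\langle g^n_s,A^n(s,s)g^n_s\rangle-\tr A^n(s,s)=\!\!\sum_{j\ne k}\!A^n(s,s)_{jk}\bar g^n_{s,j}g^n_{s,k}\;+\;\sum_j\! A^n(s,s)_{jj}(|g^n_{s,j}|^2-1)
\ees
yields total variance $2\tr(A^n(s,s)^2)+\kappa_4(\nu)\sum_j A^n(s,s)_{jj}^2$ in the real case (resp.\ $\tr(A^n(s,s)A^n(s,s)^*)+\kappa_4(\nu)\sum_j A^n(s,s)_{jj}^2$ in the complex case), whose $1/n$ rescaling gives the claimed limit via \eqref{3110.happe}; the $\kappa_4(\nu)$ term comes from the diagonal quadratic part. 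Off-diagonal entries give variance $n^{-1}\tr(A^n(s,t)A^n(s,t)^*)\to\sigma^2_{s,t}$; in the complex case, $\E[(g^n_{s,j})^2]=0$ forces independence of real and imaginary parts with common variance $\sigma^2_{s,t}/2$. Cross covariances between distinct unordered pairs $\{s,t\}\ne\{s',t'\}$ vanish because at least one ``fresh'' index produces a zero-mean factor, giving the required asymptotic independence.

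For the orthonormalised model, the key identity is that orthonormality of the $u^n_s$ forces
\bes
\sqrt n\bigl(\langle u^n_s,A^n(s,t)u^n_t\rangle-\one_{s=t}\tfrac{1}{n}\Tr A^n(s,t)\bigr)=\sqrt n\langle u^n_s,\widetilde A^n(s,t)u^n_t\rangle,
\ees
where $\widetilde A^n(s,t):=A^n(s,t)-\tfrac{1}{n}\Tr A^n(s,t)\cdot I$, so it suffices to prove an i.i.d.-type CLT for the right-hand side. Writing $u^n_s=(c^n_s)^{-1}n^{-1/2}\sum_v W^n_{s,v}g^n_v$ as in Proposition~\ref{convm}, with $c^n_s=1+O(n^{-1/2+\varepsilon})$ and $W^n_{s,v}=\delta_{s,v}+O(n^{-1/2+\varepsilon})$ with overwhelming probability, the right-hand side expands as $n^{-1/2}\langle g^n_s,\widetilde A^n(s,t)g^n_t\rangle$ plus corrections in which an off-diagonal factor $W^n=O(n^{-1/2})$ multiplies a bilinear form $n^{-1/2}\langle g^n_v,\widetilde A^n(s,t)g^n_{v'}\rangle$ of $L^2$-order $O(1)$. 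The crucial point making these corrections $o(1)$ is the centring $\Tr\widetilde A^n(s,t)=0$, without which the $v=v'$ correction terms would blow up. Once this is established, applying the previously proved i.i.d.\ CLT to the matrices $\widetilde A^n(s,t)$ completes the proof, as the Hilbert--Schmidt data of $\widetilde A^n(s,t)$ is exactly what is assumed in \eqref{3110.happe-ortho}. The main obstacle is this bookkeeping for the $W^n$ corrections without any operator-norm control on $A^n(s,t)$, relying only on Hilbert--Schmidt bounds.
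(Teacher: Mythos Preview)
Your argument is correct, but the i.i.d.\ case takes a different route from the paper. The paper does not carry out a martingale CLT from scratch: instead it applies the Cram\'er--Wold device, observing that for any Hermitian test matrix $B=[b_{s,t}]$,
\[
\Tr(BG_n)=\frac{1}{\sqrt n}\bigl(U_n^*C^nU_n-\Tr C^n\bigr),
\]
where $U_n=\sqrt n\,(u^n_1,\ldots,u^n_r)^T\in\C^{rn}$ has i.i.d.\ entries and $C^n$ is the $rn\times rn$ block matrix with $(s,t)$-block $b_{s,t}A^n(s,t)$. This reduces the problem to a CLT for a \emph{single} quadratic form in i.i.d.\ variables, for which the paper simply invokes Theorem~7.1 of \cite{bai-yao-TCL}; the block structure of $C^n$ then unpacks into the stated variances and the independence of the $g_{s,t}$. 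Your martingale approach is more self-contained and is essentially what underlies the Bai--Yao result, but the paper's packaging into one big quadratic form is slicker and avoids all the cross-covariance bookkeeping you describe: independence of the limiting entries falls out automatically from the variance formula for $\Tr(BG_n)$ being a sum of squares in the $b_{s,t}$.

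For the orthonormalised model your reduction is exactly the paper's: recentre to $\widetilde A^n(s,t)=A^n(s,t)-\tfrac{1}{n}\Tr A^n(s,t)\cdot I$, expand $u^n_s$ via $W^n$, and use Proposition~\ref{convm} together with the fact that each $n^{-1/2}\langle g^n_i,\widetilde A^n(s,t)g^n_j\rangle$ is $O_P(1)$ (either by the i.i.d.\ CLT just proved, or by a second-moment bound from $\Tr(\widetilde A\widetilde A^*)=O(n)$) to discard the $W^n-I$ corrections. Your identification of the centring $\Tr\widetilde A=0$ as the crucial point is exactly right and matches the paper's handling of the $i=j$ terms.
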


\begin{rmq}\label{3110.23h27-rmq}
Note that if the matrices $A^n(s,t)$ depend on a real parameter $x$ in such a way that for all $s,t$, for all $x,x'\in \R$,  \bes
\ff{n}\Tr(A^n(s,t)(x)-A^n(s,t)(x'))^2\ninf 0, 
\ees
then it follows directly from Theorem \ref{3110.23h27} and from a second moment computation that each finite dimensional marginal of the process 
\bes\label{3110.vosges2}\lf[\sqrt{n}\left(\langle u^n_s, A^n(s,t)(x_{s,t}) u^n_t\rangle
-\one_{s=t}\frac{1}{n}\tr( A^n(s,s)(x_{s,s}))\right)
 \ri]_{1\le s,t\le r\,,\; x_{s,t}\in \R\,,\; x_{s,t}=x_{t,s}}\ees converges weakly to the law of a limit process $[g_{s,t}]_{1\le s,t\le r\,,\; x_{s,t}\in \R\,,\; x_{s,t}=x_{t,s}}$ where there is no dependence in the variables $x_{s,t}$ ($1\le s,t\le r$).
\end{rmq}

\begin{pr}$\bullet$ Let us first consider the model where the $(\sqrt{n}u_s^n)_{1\le s\le r}$
are i.i.d. vectors with i.i.d. entries
with law $\nu$ satisfying Assumption \ref{hyponG}. Note that for all $s,t=1, \ldots, r$, by \eqref{3110.happe}, the sequence   $ \ff{n}\sum_{i,j=1}^n A^n(s,t)_{i,j}^2$  is bounded. Hence up to the extraction of a subsequence, one can suppose that it converges to a limit  $ \tau_{s,t}\in \mathbb{C}$. Since the conclusion of the theorem does not depend on the numbers $ \tau_{s,t}$ and the weak convergence is metrisable, one can ignore the fact that these convergences  are only along a subsequence. In the case where $\kappa_4(\nu)=0$, we can in the same way add  the part of the hypothesis related to $\omega_s$.

We have to prove that for any real symmetric (resp. Hermitian)   matrix $B:=[b_{s,t}]_{s,t=1}^r$, the distribution of  $\Tr(BG_n)$ converges weakly to the 
distribution of $\Tr(BG)$. Note that \beq \Tr(BG_n) &=& \ff{\sqrt{n}}(U_n^*C^nU_n-\Tr C^n),
\eeq
where $C^n$ is the $rn\times rn $   matrix and $U_n$ is the $rn\times 1$ random vector  defined by $$C^n=\bbm b_{1,1}A^n(1,1) &\cdots& b_{1,r}A^n(1,r)\\
\vdots && \vdots\\
b_{r,1}A^n(r,1) &\cdots& b_{r,r}A^n(r,r)\ebm,\quad U_n=\sqrt{n}\bbm u^n_1\\ \vdots\\ u_r^n\ebm.$$ 
In the real (resp. complex) case, let us now apply Theorem 7.1  of \cite{bai-yao-TCL} in the case $K=1$.  It follows that the distribution of $$\Tr(BG_n)=\sum_{s=1}^r b_{s,s}G_{n,s,s}+\sum_{1\le s<t\le r}2 \Re(b_{s,t})\Re(G_{n,s,t})+2 \Im(b_{s,t})\Im(G_{n,s,t})$$ converges weakly to a centred real Gaussian law with variance $$\begin{cases} \sum_{s=1}^r b_{s,s}^2 (2\sigma_{s,s}^2+\kappa_4(\nu)\omega_s)+\sum_{1\le s<t\le r}(2b_{s,t})^2\sigma_{s,t}^2&\textrm{in the real case,}\\
\sum_{s=1}^r b_{s,s}^2 (\sigma_{s,s}^2+\kappa_4(\nu)\omega_s)+\sum_{1\le s<t\le r}(2\Re(b_{s,t}))^2\f{\sigma_{s,t}^2}{2}+(2\Im(b_{s,t}))^2\f{\sigma_{s,t}^2}{2}&\textrm{in the complex case.}
\end{cases}$$ It completes the proof in the i.i.d. model.

$\bullet$  In the orthonormalised model,
we can write 
$u^n_s=\frac{1}{\|\sum_{i=1}^s W^n_{si} g_i\|_2}
\sum_{j=1}^s W^n_{sj} g_j$, where the matrix $W^n$ is the one introduced in this section.
It follows that,  with $$B^n(s,t)= A^n(s,t)-\frac{1}{n}\tr(A^n(s,t) ),$$ by orthonormalization of the $u_s^n$'s
\beq&&\sqrt{n}\left( \langle  u^n_s, A^n(s,t) u^n_t\rangle -\frac{\one_{s=t}}{n}\tr(A^n(s,t) )\right)\\
&=&\sqrt{n}\langle  u^n_s, B^n(s,t) u^n_t\rangle\\
&=&\frac{n}{\|\sum_{i=1}^s W^n_{si} g_i\|_2\|\sum_{i=1}^t W^n_{ti} g_i\|_2
} \sum_{j,i=1}^r W^n_{si}\bar W^n_{tj}\frac{1}{\sqrt{n}}
\langle g_i, B^n({s,t}) g_j\rangle. \eeq
But, by the previous result, if $i\neq j$,
$$\frac{1}{\sqrt{n}}
\langle g_i, B({s,t}) g_j\rangle  $$
converges in distribution to a Gaussian law,
whereas  if $i=j$,
$$\frac{1}{\sqrt{n}} 
\langle g_i, B({s,t}) g_i\rangle$$
$$=\frac{1}{\sqrt{n}}\left(
\langle g_i, A({s,t}) g_i\rangle- \E[\langle g_i, A({s,t}) g_i\rangle] \right)+\frac{\tr(A(s,t))}{\sqrt{n}}\left(
\langle g_i,  g_i\rangle- \E[\langle g_i,  g_i\rangle] \right) $$
where both terms converge to
a Gaussian. Thus this term  is also bounded as $n$ goes to infinity.

Hence, by Proposition \ref{convm},
we may and shall replace $W^n$ by the identity (since the error term would be
of order at most $n^{-\frac{1}{2}+\e}$), which yields
$$\sqrt{n} \langle  u^n_s, B^n(s,t) u^n_t\rangle   \approx \sqrt{n}^{-1} \langle g_s, B({s,t}) g_t\rangle $$
so that we are back to the previous setting with
$B$ instead of $A$.
\end{pr}

{\bf Acknowledgments:} We are very grateful to B. Schlein for communicating
us Lemma \ref{lemschlein}. We also thank G. Ben Arous  and J. Baik for fruitful discussions. We also thank the referee, who   pointed some vagueness in the first version of the paper.

\end{document}